\renewcommand*{\backref}[1]{}
\renewcommand*{\backrefalt}[4]{
  \ifcase #1 %
   [No citations.]%
  \or
   [#2]%
  \else
   [#2]%
  \fi
}
\let\oldmarginpar\marginpar
\renewcommand\marginpar[1]{\oldmarginpar[\raggedleft\footnotesize #1]%
{\raggedright\footnotesize #1}}
\renewcommand{\setminus}{{\smallsetminus}}
\newcommand{\HH}{{\mathbb{H}}}
\newcommand{\RR}{{\mathbb{R}}}
\newcommand{\ZZ}{{\mathbb{Z}}}
\newcommand{\NN}{{\mathbb{N}}}
\newcommand{\st}{\mathbin{\mid}} 
\newcommand{\cross}{\mathbin{\times}}
\newcommand{\vol}{{\operatorname{vol}}}
\newcommand{\area}{{\operatorname{area}}}
\newcommand{\height}{{\operatorname{height}}}
\newcommand{\width}{{\operatorname{width}}}
\newcommand{\bdy}{{\partial}}
\newcommand{\homeo}{\mathrel{\cong}} 
\newcommand{\isom}{\mathrel{\cong}} 
\newcommand{\from}{\colon} 
\newcommand{\abs}[1]{{\left\vert #1 \right\vert}}
\newcommand{\mon}{{\varphi}}
\newcommand{\quotient}[2]{{\raisebox{0.2em}{$#1$}
           \!\!\left/\raisebox{-0.2em}{\!$#2$}\right.}}
\newcommand{\curve}{{\mathcal{C}}}
\newcommand{\arc}{{\mathcal{A}}}
\newcommand{\pants}{{\mathcal{P}}}
\newcommand{\teich}{{\mathcal{T}}}
\newcommand{\dbar}{{\overline{d}}}
\newcommand{\core}{{\rm core}}
\newcommand{\eps}{{\varepsilon}}
\newcommand{\calS}{{\mathcal{S}}}
\newcommand{\MCG}{{\operatorname{MCG}}}
\newcommand{\GL}{\mathcal{GL}} 
\newcommand{\ML}{\mathcal{ML}} 
\newcommand{\PML}{\mathcal{PML}} 
\newcommand{\lamm}{{L}} 
\newcommand{\Image}{{\operatorname{Im}}}
\theoremstyle{plain}
\newtheorem{theorem}{Theorem}[section]
\newtheorem{corollary}[theorem]{Corollary}
\newtheorem{lemma}[theorem]{Lemma}
\newtheorem{prop}[theorem]{Proposition}
\newtheorem*{namedtheorem}{\theoremname}
\newcommand{\theoremname}{testing}
\newenvironment{named}[1]{\renewcommand{\theoremname}{#1}\begin{namedtheorem}}{\end{namedtheorem}}
\theoremstyle{definition}
\newtheorem{define}[theorem]{Definition}
\newtheorem{remark}[theorem]{Remark}
\newtheorem{fact}[theorem]{Fact}
\newtheorem*{proofclaim}{Claim}
\newtheorem{question}[theorem]{Question}
\numberwithin{equation}{section}
\begin{document}
\title{Cusp geometry of fibered $3$--manifolds}
\author{David Futer}
\address[]{Department of Mathematics, Temple University,
Philadelphia PA 19122, USA}
\email[]{dfuter@temple.edu}

\author{Saul Schleimer}
\address[]{Department of Mathematics, 
University of Warwick, Coventry CV4 7AL, UK}
\email[]{s.schleimer@warwick.ac.uk}

\thanks{{Futer is supported in part by NSF grant DMS--1007221.}}
\thanks{{Schleimer is supported in part by EPSRC grant EP/I028870/1.}}
\thanks{{This work is in the public domain.}}

\date{ \today}
\subjclass[2000]{57M50, 57M60, 30F40}

\begin{abstract}  
Let $F$ be a surface and suppose that $\mon \from F \to F$ is a
pseudo-Anosov homeomorphism, fixing a puncture $p$ of $F$.  The
mapping torus $M = M_\mon$ is hyperbolic and contains a maximal cusp
$C$ about the puncture $p$.

We show that the area (and height) of the cusp torus $\bdy C$ is equal
to the stable translation distance of $\mon$ acting on the arc complex
$\arc(F,p)$, up to an explicitly bounded multiplicative error.  Our
proof relies on elementary facts about the hyperbolic geometry of
pleated surfaces.  In particular, the proof of this theorem does not
use any deep results from Teichm\"uller theory, Kleinian group theory,
or the coarse geometry of $\arc(F,p)$.

A similar result holds for quasi-Fuchsian manifolds $N \homeo F \times
\RR$.  In that setting, we find a combinatorial estimate for the area
(and height) of the cusp annulus in the convex core of $N$, up to
explicitly bounded multiplicative and additive error.  As an
application, we show that covers of punctured surfaces induce
quasi-isometric embeddings of arc complexes.
\end{abstract}

\maketitle

\section{Introduction}

Following the work of Thurston, Mostow, and Prasad, it has been known
for over three decades that almost every $3$--manifold with torus
boundary admits a hyperbolic structure~\cite{thurston:survey}, which
is necessarily unique up to isometry~\cite{mostow:rigidity,
  prasad:rigidity}.  Thus, in principle, it is possible to translate
combinatorial data about a $3$--manifold into a detailed description
of its geometry --- and conversely, to use geometry to identify
topological features. Indeed, given a triangulated manifold (up to
over 100 tetrahedra) the computer program SnapPy can typically
approximate the manifold's hyperbolic metric to a high degree of
precision~\cite{snappea}.  However, building an effective dictionary
between combinatorial and geometric features, for all but the most
special families of manifolds, has proven elusive.  The prevalence of
hyperbolic geometry makes this one of the central open problems in
low-dimensional topology.

\subsection{Fibered $3$--manifolds}
In this paper, we attack this problem for the class of hyperbolic
$3$--manifolds that fiber over the circle.  Let $F$ be a connected,
orientable surface with $\chi(F) < 0$, and (for this paper) with at
least one puncture.  Given an orientation-preserving homeomorphism
$\mon \from F \to F$ we construct the {\em mapping torus}
\[
M_\mon := \,\quotient{F \times [0,1]}{(x, 1) \sim (\mon(x), 0)}.
\]
Thus $M_\mon$ fibers over $S^1$, with fiber $F$ and monodromy $\mon$.
Thurston showed that $M_\mon$ is hyperbolic if and only if $\mon$ is
\emph{pseudo-Anosov}: equivalently, if and only if $\mon^n(\gamma)$ is
not homotopic to $\gamma$ for any $n \neq 0$ and any essential simple
closed curve $\gamma \subset F$
\cite{thurston:survey,thurston:geometry-dynamics}. See also Otal
\cite{otal:fibered}.  In addition to the connection with dynamics,
fibered $3$--manifolds are of central importance in low-dimensional
topology because every finite-volume, non-positively curved
$3$--manifold has a finite--sheeted cover that fibers
\cite{agol:virtual-haken, przytycki-wise:mixed-manifolds}.

For those fibered $3$--manifolds that are hyperbolic, the work of
Minsky, Brock, and Canary on Kleinian surface groups provides a
combinatorial, bi-Lipschitz model of the hyperbolic metric
\cite{minsky:models-bounds}. The bi-Lipschitz constants depend only on
the fiber $F$~\cite{brock-canary-minsky:elc}. However, the existence
of these constants is proved using compactness arguments; as a result
the constants are unknown.

Using related ideas, Brock established the following notable entry in
the dictionary between combinatorics and geometry.

\begin{theorem}[Brock~\cite{brock:fibered, brock:quasifuchsian}]
\label{thm:pants-volume}
Let $F$ be an orientable surface with $\chi(F)<0$.  Then there exist
positive constants $K_1$ and $K_2$, depending only on $F$, such that
the following holds.  For every orientation-preserving, pseudo-Anosov
homeomorphism $\mon \from F \to F$, the mapping torus $M_\mon$ is a
hyperbolic $3$--manifold satisfying
\[
K_1 \, \dbar_\pants(\mon) \: \leq \: \vol(M_\mon) \: 
     \leq \: K_2 \, \dbar_\pants(\mon).
\]
\end{theorem}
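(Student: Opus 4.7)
The plan is to route the proof through the Weil--Petersson (WP) geometry of Teichm\"uller space. The bridge between combinatorics and geometry is provided by Brock's combinatorial model theorem, which states that $(\pants(F), d_\pants)$ is quasi-isometric to $(\teich(F), d_{WP})$ with constants depending only on $F$. Under this quasi-isometry, the stable translation length $\dbar_\pants(\mon)$ is comparable to the stable WP translation length $\|\mon\|_{WP} := \lim_n d_{WP}(X,\mon^n X)/n$. This reduces the theorem to the assertion that $\vol(M_\mon) \asymp \|\mon\|_{WP}$, with multiplicative constants depending only on $F$.

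The core step is to first establish the analogous statement for quasi-Fuchsian manifolds: for any $X, Y \in \teich(F)$, the convex core of the quasi-Fuchsian manifold $Q(X,Y)$ satisfies
\[
    \vol(\core Q(X,Y)) \: \asymp \: d_{WP}(X, Y),
\]
up to multiplicative and additive constants depending only on $F$. Once this is in hand, the fibered case follows by Thurston's double limit theorem: fix $X \in \teich(F)$ and consider the quasi-Fuchsian manifolds $Q_n := Q(\mon^{-n}X,\mon^n X)$. These converge geometrically to the infinite cyclic cover $\tilde M_\mon$, so $\vol(\core Q_n)/(2n) \to \vol(M_\mon)$ while $d_{WP}(\mon^{-n}X,\mon^n X)/(2n) \to \|\mon\|_{WP}$. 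Dividing the quasi-Fuchsian estimate by $2n$ and sending $n \to \infty$ absorbs the additive constants and leaves the desired multiplicative comparison.

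The upper bound in the quasi-Fuchsian step is the more accessible direction. Given a WP geodesic from $X$ to $Y$, I would sample points along it at bounded WP distance and realize each as a pleated surface in $Q(X,Y)$ pleated along a nearby pants decomposition; the region between consecutive pleated surfaces has uniformly bounded volume because consecutive pants decompositions along a WP-efficient path differ by a bounded-complexity sequence of elementary moves. The lower bound is harder and relies on a Schl\"afli-type infinitesimal inequality: the derivative of $\vol(\core Q)$ under a variation of the conformal boundary is controlled by the WP cometric evaluated on the variation, so integrating along a path from $X$ to $Y$ produces the required lower bound on $d_{WP}(X,Y)$ in terms of the volume.

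The main obstacle is uniformity of constants. Both Brock's quasi-isometry between $\pants(F)$ and $\teich(F)^{WP}$ and the Schl\"afli-type estimate must yield constants depending only on the topological type of $F$. This is especially delicate in the lower bound, where one must rule out the possibility that a long but thin WP geodesic, travelling deep into the thin part of moduli space, could correspond to a convex core of unreasonably small volume. Controlling this uniformly across $\teich(F)$ is where the bulk of the technical work lies, and it is the reason that the resulting constants $K_1, K_2$ are not explicit.
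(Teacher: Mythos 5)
You should first note that the paper does not prove Theorem~\ref{thm:pants-volume} at all: it is quoted from Brock's two papers as background, so the only meaningful comparison is with Brock's published argument. Your overall architecture is in fact his: the equivariant quasi-isometry between $\pants(F)$ and $(\teich(F), d_{WP})$, the comparison $\vol(\core(Q(X,Y))) \asymp d_{WP}(X,Y)$ for quasi-Fuchsian manifolds up to additive error, and the passage to mapping tori by letting $Q(\mon^{-n}X,\mon^{n}X)$ converge strongly to the infinite cyclic cover so that volumes and WP distances can both be divided by $2n$. Your sketch of the volume upper bound (interpolating pleated or simplicial hyperbolic surfaces along a bounded-step path of pants decompositions, each block contributing bounded volume) is also essentially his.

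The genuine gap is the mechanism you propose for the hard direction, the lower bound $\vol(\core(Q(X,Y))) \geq K_1'\, d_{WP}(X,Y) - c$. A Schl\"afli-type (or renormalized-volume) variational formula controls the derivative of volume from \emph{above} by the WP norm of the boundary variation, $|dV| \leq c\,\|\mu\|_{WP}$; integrating this along a path gives $\vol \leq c\, d_{WP} + c'$, which is the upper bound you already have, not the lower bound. To get the lower bound by integration you would need the derivative of volume bounded \emph{below} along a WP geodesic, and no such pointwise inequality holds: the relevant gradient degenerates near the Fuchsian locus, and nothing formal rules out long stretches of WP travel with slow volume growth --- this is exactly the "long thin geodesic" scenario you flag, but flagging it is not the same as excluding it. Brock's proof of this direction is of a different nature: he sweeps the convex core by pleated/simplicial hyperbolic surfaces, records the Bers (bounded-length) pants decompositions these realize, shows that consecutive ones lie at uniformly bounded distance in $\pants(F)$ so that roughly $d_\pants$ distinct ones must occur, and then uses a compactness argument to show that each such bounded-length pants decomposition forces a definite amount of volume in the core; it is precisely this compactness step that makes $K_1$ non-explicit, as the paper remarks. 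Without an argument of that type (or the much later, and much harder, gradient-flow lower bounds for renormalized volume, which do not follow from a formal Schl\"afli estimate), your proposal does not establish the left-hand inequality of the theorem. The remaining steps (equivariant quasi-isometries preserve stable translation length up to the multiplicative constant; $\vol(\core Q_n)/2n \to \vol(M_\mon)$ under strong convergence) are correct in outline, though the latter needs the care Brock gives it.
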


\noindent
Here $\pants(F)$ is the adjacency graph of pants decompositions of
$F$.  Also $\dbar_\pants(\mon)$ is the stable translation distance of
$\mon$ in $\pants(F)$, defined in Equation~\eqref{eq:stable-distance}
below.

The constant $K_2$ in the upper bound can be made explicit.  Agol
showed that the sharpest possible value for $K_2$ is $2v_8$, where
$v_8 = 3.6638...$ is the volume of a regular ideal octahedron
\cite{agol:small}.  On the other hand, the constant $K_1$ is only
known in the special case when $F$ is a punctured torus or
$4$--puncture sphere; see Gu\'eritaud and Futer~\cite[Appendix
  B]{gf:punctured-torus}. For all other surfaces, it remains an open
problem to give an explicit estimate for $K_1$.

Brock's theorem is a template for obtaining combinatorial information;
the pants graph $\pants(F)$ is just one of many complexes naturally
associated to a surface $F$.  Others include the curve complex
$\curve(F)$ and the arc complex $\arc(F)$; the latter is the main
focus of this paper.  Using $\arc(F)$ we give \emph{effective}
two-sided estimates for the geometry of maximal cusps in $M_\mon$.

\begin{define}
\label{def:arc-complex}
Suppose $F$ is a surface of negative Euler characteristic, connected
and orientable, without boundary and with at least one puncture.  The
\emph{arc complex} $\arc(F)$ is the simplicial complex whose vertices
are proper isotopy classes of essential arcs from puncture to
puncture.  Simplices are collections of vertices admitting pairwise
disjoint representatives.  We engage in the standard abuse of notation
by using the same symbol for an arc and its isotopy class.

The $1$--skeleton $\arc^{(1)}(F)$ has a combinatorial metric.  For a
pair of vertices $v, w \in \arc^{(0)}(F)$, the distance $d(v,w)$ is
the minimal number of edges required to connect $v$ to $w$.  This is
well-defined, because $\arc(F)$ is
connected~\cite{hatcher:triangulations}.



When $F$ has a preferred puncture $p$,  we define the subcomplex
$\arc(F,p) \subset \arc(F)$ whose vertices are arcs with at least one
endpoint at $p$.  The $1$--skeleton $\arc^{(1)}(F,p)$ is again
connected.  The distance $d_\arc(v,w)$ is the minimal number of edges
required to connect $v$ to $w$ inside of $\arc^{(1)}(F,p)$.
\end{define}

The mapping class group $\MCG(F)$ acts on $\arc(F)$ by isometries. In
fact, Irmak and McCarthy showed~\cite{irmak-mccarthy:arc-complex}
that, apart from a few low-complexity exceptions, $\MCG(F) \isom
\operatorname{Isom} \arc(F)$.  Similarly, the subgroup of $\MCG(F)$
that fixes the puncture $p$ acts on $\arc(F,p)$ by isometries.  We are
interested in the geometric implications of this action.

\begin{define}
\label{def:trans-distance}
Let $\mon \from F \to F$ be a homeomorphism fixing $p$. Define the
\emph{translation distance} of $\mon$ in $\arc(F,p)$ to be
\begin{equation}\label{eq:trans-distance}
d_\arc(\mon) = \min \{ d_\arc(v, \mon(v)) \st v \in \arc^{(0)}(F,p) \}.
\end{equation}
The same definition applies in any simplicial complex where $\MCG(F)$
acts by isometries.

We also define the \emph{stable translation distance} of $\mon$ to be
\begin{equation}\label{eq:stable-distance}
\dbar_\arc(\mon) = \lim_{n \to \infty} \frac{ d_\arc(v, \mon^n(v))
}{n}, \quad \mbox{for an arbitrary vertex } v \in \arc^{(0)}(F,p),
\end{equation} 
and similarly for other $\MCG$--complexes. It is a general property of
isometries of metric spaces that the limit in
\eqref{eq:stable-distance} exists and does not depend on the base
vertex $v$~\cite[Section 6.6]{bridson-haefliger}. In addition, the
triangle inequality implies that $\dbar_\arc(\mon) \leq d_\arc(\mon)$.
\end{define}

Note that applying equation \eqref{eq:stable-distance} to the pants
graph $\pants(F)$ gives the stable translation distance $\dbar_\pants(\mon)$
that estimates volume in Theorem~\ref{thm:pants-volume}.
In the same spirit, one may ask the following.

\begin{question}\label{quest:dynamics}
Let $\calS(F)$ be a simplicial complex associated to a surface $F$, on
which the mapping class group $\MCG(F)$ acts by isometries.  How are the
dynamics of the action of $\mon \in \MCG(F)$ on $\calS(F)$ reflected
in the geometry of the mapping torus $M_\mon$?
\end{question}

We answer this question for the arc complex of a once-punctured
surface $F$ or, more generally, for the sub-complex $\arc(F,p)$ of a
surface with many punctures. Here, the stable distance
$\dbar_\arc(\mon)$ predicts the cusp geometry of $M_\mon$.

\subsection{Cusp area from the arc complex}

Let $M$ be a $3$--manifold whose boundary is a non-empty union of
tori, such that the interior of $M$ supports a complete hyperbolic
metric. In this metric, every non-compact end of $M$ is a \emph{cusp},
homeomorphic to $T^2 \times [0, \infty)$. Geometrically, each cusp is
a quotient of a horoball in $\HH^3$ by a $\ZZ \times \ZZ$ group of
deck transformations. We call this geometrically standard end a
\emph{horospherical cusp neighborhood} or \emph{horocusp}.

Associated to each torus $T \subset \bdy M$ is a \emph{maximal cusp}
$C = C_T$. That is, $C$ is the closure of $C^\circ \subset M$, where
$C^\circ$ is the largest embedded open horocusp about $T$. 
The same construction works in dimension $2$: every punctured
hyperbolic surface has a maximal cusp about each puncture.

In dimension $3$, Mostow--Prasad rigidity implies that the geometry of
a maximal cusp $C \subset M$ is completely determined by the topology
of $M$. One may compute that $\area(\bdy C) = \frac{1}{2}
\vol(C)$. The Euclidean geometry of $\bdy C$ is an important invariant
that carries a wealth of information about Dehn fillings of $M$. For
example, if a \emph{slope} $s$ (an isotopy class of simple closed
curve on $\bdy C$) is sufficiently long, then Dehn filling $M$ along
$s$ produces a hyperbolic manifold~\cite{agol:6theorem,
lackenby:surgery}, whose volume can be estimated in terms of the
length $\ell(s)$~\cite{fkp:volume}.

In our setting, $M_\mon$ is a fibered hyperbolic $3$--manifold, with
fiber a punctured surface $F$. The maximal cusp torus $\bdy C$
contains a canonical slope, called the \emph{longitude} of $C$, which
encircles a puncture of $F$.  The Euclidean length of the longitude is
denoted $\lambda$.  Any other, non-longitude slope on $\bdy C$ must
have length at least
\begin{equation}
\label{eq:height}
\height(\bdy C) := \area(\bdy C) / \lambda.
\end{equation}
As discussed in the previous paragraph, lower bounds on $\height(\bdy
C)$ imply geometric control over Dehn fillings of $M_\mon$.
 
Our main result in this paper uses the action of $\mon$ on $\arc(F,p)$
to give explicit estimates on the area and height of the cusp torus
$\bdy C$.

\begin{theorem}
\label{thm:main}
Let $F$ be a surface with a preferred puncture $p$, and let $\mon
\from F \to F$ be any orientation-preserving, pseudo-Anosov
homeomorphism. In the mapping torus $M_\mon$, let $C$ be the maximal
 cusp that corresponds to $p$. Let
$\psi = \mon^n$ be the smallest positive power of $\mon$ with the
property that $\psi(p)=p$. Then

\[
\frac{ \dbar_{\arc} (\psi)}{450\, \chi(F)^4 } \: < \:
\area(\bdy C) \: \leq \: 9 \, \chi(F)^2 \, \dbar_{\arc} (\psi).
\]
Similarly, the height of the cusp relative to a longitude satisfies
\[
\frac{ \dbar_{\arc} (\psi)}{536\, \chi(F)^4 } \: < \: 
\height(\bdy C) \: < \: -3 \, \chi(F) \, \dbar_{\arc} (\psi).
\]
\end{theorem}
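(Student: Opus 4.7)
My plan is to use pleated surfaces as the geometric bridge between the arc complex $\arc(F,p)$ and the cusp geometry of $M_\mon$. Since $\psi = \mon^n$ fixes the puncture $p$, I would work primarily in $M_\psi$, whose cusp $C_p$ covers $C \subset M_\mon$; the scaling of areas and heights under this covering is elementary. For each vertex $\alpha \in \arc^{(0)}(F,p)$ I construct a pleated surface $\Sigma_\alpha \from F \to M_\psi$ in the homotopy class of the fiber, whose pleating lamination contains $\alpha$ as a geodesic leaf. This is done by extending $\alpha$ to a maximal geodesic lamination (for instance by spinning the two endpoints of $\alpha$ around a short essential curve at $p$) and realizing the extension in $M_\psi$. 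By Gauss--Bonnet $\area \Sigma_\alpha = -2\pi\chi(F)$, and $\Sigma_\alpha$ meets the cusp in a standard cusp region whose boundary $h_\alpha = \Sigma_\alpha \cap \bdy C_p$ is a horocycle parallel to the longitude of length exactly $\lambda$; in particular $\lambda \leq -2\pi\chi(F)$.

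For the \textbf{upper bound}, fix large $n$ and take a geodesic path
\[
\alpha_0, \alpha_1, \ldots, \alpha_N \: = \: \psi^n(\alpha_0)
\]
in $\arc^{(1)}(F,p)$ of length $N \leq n\,\dbar_\arc(\psi) + O(1)$. Since $\alpha_i$ and $\alpha_{i+1}$ are disjoint they extend to a common lamination, yielding a single pleated surface $\Pi_i$ realizing both. Pass to the $n$--fold cyclic cover $\widetilde{M}$, where the lifted cusp $\widetilde{C}$ has area $n \cdot \area(\bdy C_p)$. Lift the $\Pi_i$ and read off horocycles $\widetilde{h}_{\Pi_0}, \ldots, \widetilde{h}_{\Pi_{N-1}} \subset \bdy \widetilde{C}$; their total vertical displacement equals $\height(\bdy \widetilde{C})$ since the endpoints are a deck translate apart. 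The key elementary lemma is that if $\Pi_i, \Pi_{i+1}$ share the geodesic realization of $\alpha_{i+1}$, then $\widetilde{h}_{\Pi_i}$ and $\widetilde{h}_{\Pi_{i+1}}$ differ in height by at most a constant $c$ comparable to $-\chi(F)$; this should follow from hyperbolic trigonometry in the horoball around the common geodesic entry point. Summing the $N$ contributions bounds $\height(\bdy \widetilde{C}) \leq cN$, and dividing by $n$ then letting $n \to \infty$ yields $\height(\bdy C_p) \leq c\, \dbar_\arc(\psi)$; multiplying by $\lambda \leq -2\pi\chi(F)$ gives the area bound.

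For the \textbf{lower bound}, I reverse the construction. Foliate $\bdy C_p$ by horocycles $\gamma_t$ parallel to the longitude at heights $t = k\delta$, where $\delta = \delta(\chi(F))$ is a small spacing (roughly $\delta \sim 1/(-\chi(F))^3$). For each $t$, realize $\gamma_t$ as the cusp boundary of a pleated surface $\Sigma_t \subset M_\psi$, and read off an arc $\alpha_t \in \arc^{(0)}(F,p)$ from its pleating lamination. The complementary lemma is that for this $\delta$, the surfaces $\Sigma_t$ and $\Sigma_{t+\delta}$ can be interpolated by a pleated surface realizing both $\alpha_t$ and $\alpha_{t+\delta}$, so that $d_\arc(\alpha_t, \alpha_{t+\delta}) \leq 1$. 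After sweeping one fundamental period of $\bdy C_p$ in the meridian direction, $\alpha_0$ returns to $\psi(\alpha_0)$ up to bounded arc-complex error, giving a path of length at most $\height(\bdy C_p)/\delta + O(1)$. Hence $\dbar_\arc(\psi) \leq \height(\bdy C_p)/\delta$, so $\height(\bdy C_p) \geq \delta \dbar_\arc(\psi)$. The height bound follows after substituting $\delta$, and the area bound follows from $\area = \lambda \cdot \height$ combined with $\lambda \geq 1$ (which holds because $C_p$ is a maximal cusp, so every slope has length at least $1$).

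The \textbf{main obstacle} is the pair of geometric lemmas quantifying the two-way correspondence between vertical horocycle displacement on $\bdy C_p$ and arc-complex distance of the corresponding pleated surfaces. Qualitatively the statements are clean --- pleated surfaces sharing an arc produce nearby horocycles, and vice versa --- but extracting the explicit constants $9$, $450$, and $536$ requires careful tracking of the horocycle length bound $\lambda \leq -2\pi\chi(F)$, the area $-2\pi\chi(F)$ of $\Sigma_\alpha$, and the hyperbolic geometry of a geodesic arc entering the horoball. In line with the stated elementary character of the paper, no deep input from Teichm\"uller theory, Kleinian group theory, or the coarse geometry of $\arc(F,p)$ should be needed.
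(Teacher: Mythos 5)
Your \textbf{upper bound} outline follows the same broad strategy as the paper (a geodesic path $a_0,\ldots,a_N$ in $\arc^{(1)}(F,p)$, pleated fiber surfaces realizing consecutive arcs, heights at which arcs enter the cusp annulus, summation, then a covering/limiting step to replace $d_\arc$ by $\dbar_\arc$), but two points need repair. First, your claim that a pleated surface meets the \emph{maximal} cusp in a standard region whose trace on $\bdy C$ is a horocycle of length exactly $\lambda$ parallel to the longitude is false: near the maximal cusp the surface may be bent along geodesics that penetrate $C$, so the intersection is not standard. The paper instead shrinks to a small horocusp $C_0$ where the intersection is standard and projects the resulting curve out to $\bdy C$ (Lemmas~\ref{lemma:pleated-cusp} and~\ref{lemma:projection-curve}), obtaining a piecewise-geodesic closed curve of length at most $-6\chi(F)$ (Lemma~\ref{lemma:circle-packing-bound}); this is what makes the per-step height bound $-3\chi(F)$ of Lemma~\ref{lemma:height-segment} precise. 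Second, bounding the per-step area by (height increment)$\times\lambda$ cannot give the constant $9$: with $\lambda\leq -6\chi(F)$ or $-2\pi\chi(F)$ that product is roughly $18\chi(F)^2$; the paper gets $9\chi(F)^2$ per band by the isoperimetric ``corral'' argument of Lemma~\ref{lemma:area-segment}. These are fixable, but as written your constants do not come out.

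The \textbf{lower bound} has a genuine gap. You propose to slice $\bdy C$ into horocycles at heights $k\delta$, to ``realize'' each horocycle as the cusp boundary of a pleated fiber surface, and to assert that surfaces at consecutive heights carry arcs at distance at most $1$ in $\arc(F,p)$. Neither step is available: a pleated surface is produced by realizing a lamination, not by prescribing where it meets the maximal cusp, so there is no construction yielding $\Sigma_t$ with prescribed cusp trace; and even granted such surfaces, nothing about nearby cusp traces forces their pleating arcs to be combinatorially close --- two pleated surfaces can pass through the same region of $M_\psi$ while their short arcs are far apart in $\arc(F,p)$. Your interpolation lemma is the entire content of the lower bound, and the only known way to produce it is essentially the paper's route: a continuous, equivariant sweepout of the infinite cyclic cover by simplicial hyperbolic surfaces obtained by interpolating pleated surfaces across diagonal exchanges (Proposition~\ref{prop:sweepout}; this is why cone points of angle in $[2\pi,4\pi)$ and the cone-surface estimates of Section~3 appear), continuity of arc lengths in the sweepout parameter, the point-set lemma to extract a chain of arcs each \emph{short} on some swept surface with consecutive arcs short on a common surface, and Lemma~\ref{lemma:embeddedness} to conclude consecutive arcs are disjoint. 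The quantitative count is then not a vertical slicing at all: each arc in the chain contributes a pair of horoball shadows of radius $\sqrt{2}/(8\pi^2\chi(F)^2)$ on $\bdy C$ (Lemma~\ref{lemma:horoball-shadows}), one must check the arcs lie in distinct $\psi$--orbits so the disks embed in the torus (Lemma~\ref{lemma:power-sequence}), and a circle-packing density bound gives the area, with the height following from $\area=\lambda\cdot\height$ and the Adams normalization $\lambda=2^{1/4}$ (Lemma~\ref{lemma:waist}). Without a sweepout or an equivalent one-parameter family joining the fiber to its $\psi$--image, your lower bound does not go through; note also that your $\delta\sim 1/\abs{\chi(F)}^3$ would not reproduce the stated $\chi(F)^4$ constants.
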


If the surface $F$ has only one puncture $p$, the statement of Theorem
\ref{thm:main} becomes simpler in several ways. In this special case,
we have $n = 1$, hence $\psi = \mon$. There is only one cusp in
$M_\mon$, and  $\arc(F,p) = \arc(F)$. In this special case, the area
and height of the maximal cusp are estimated by the stable translation
distance $\dbar_\arc(\mon)$, acting on $\arc(F)$.

In the special case where $F$ is a once-punctured torus or
$4$--punctured sphere, Futer, Kalfagianni, and Purcell proved a
similar estimate, with sharper constants. See~\cite[Theorems 4.1 and
4.7]{fkp:farey}. Theorem~\ref{thm:main} generalizes those
results to \emph{all} punctured hyperbolic surfaces.

We note that a non-effective version of Theorem~\ref{thm:main} can be
derived from Minsky's \emph{a priori bounds} theorem for the length of
curves appearing in a hierarchy~\cite[Lemma
7.9]{minsky:models-bounds}. In fact, this line of argument was our
original approach to estimating cusp area. In the process of studying
this problem, we came to realize that arguments using the geometry and
hierarchical structure of the curve complex $\curve(F)$ can be
replaced by elementary arguments focusing on the geometry of pleated
surfaces. See Section~\ref{subsec:outline} below for an outline of
this effective argument.

\subsection{Quasi-Fuchsian $3$--manifolds}

The methods used to prove Theorem~\ref{thm:main} also apply to
quasi-Fuchsian manifolds. We recall the core definitions; see
Marden~\cite[Chapter 3]{marden:book} or
Thurston~\cite[Chapter~8]{thurston:notes} for more details.  A
hyperbolic manifold $N = \HH^3 / \Gamma$ is called
\emph{quasi-Fuchsian} if the limit set $\Lambda(\Gamma)$ of $\Gamma$
is a Jordan curve on $\bdy \HH^3$, and each component of $\bdy \HH^3
\setminus \Lambda(\Gamma)$ is invariant under $\Gamma$.  In this case,
$N$ is homeomorphic to $F \times \RR$ for a surface $F$. The
\emph{convex core} of $N$, denoted $\core(N)$, is defined to be the
quotient, by $\Gamma$, of the convex hull of the limit set
$\Lambda(\Gamma)$.  
When $N$ is quasi-Fuchsian but not Fuchsian, $\core(N) \cong F \times
[0,1]$, and its boundary is the disjoint union of two surfaces
$\bdy_+\core(N)$ and $\bdy_-\core(N)$, each intrinsically hyperbolic,
and each pleated along a lamination. See Definition~\ref{def:pleated}.

Although the quasi-Fuchsian manifold $N$ has infinite volume, the
volume of $\core(N)$ is finite. Each puncture of $F$ corresponds to a
rank one maximal cusp $C$ (the quotient of a horoball by
$\mathbb{Z}$), such that $C \cap \core(N)$ has finite volume and $\bdy
C \cap \core(N) \cong S^1 \times [0,1]$ has finite area.  Thus we may
attempt to estimate the area and height of $\bdy C \cap \core(N)$
combinatorially.

\begin{define}
\label{def:distance-qf}
Let $N \cong F \times \RR$ be a quasi-Fuchsian $3$--manifold, and let
$p$ be a puncture of $F$.  Define $\Delta_+(N)$ to be the collection of
all shortest arcs from $p$ to $p$ in $\bdy_+ \core(N)$.  By
Lemma~\ref{lemma:shortish-disjoint}, the arcs in $\Delta_+(N)$ are
pairwise disjoint, so $\Delta_+(N)$ is a simplex in $\arc(F, p)$.
Similarly, let $\Delta_-(N)$ be the simplex of shortest arcs from $p$
to $p$ in $\bdy_- \core(N)$.

We define the \emph{arc distance} of $N$ relative to the puncture $p$
to be
\[
d_\arc(N,p) = 
   \min \{d_\arc(v,w) \st v \in \Delta_-(N), \, w \in \Delta_+(N) \}.
\]
In words, $d_\arc(N,p)$ is the length of the shortest path in $\arc(F,
p)$ from a shortest arc in the lower convex core boundary to a
shortest arc in the upper boundary.
\end{define}

\begin{theorem}\label{thm:main-qf}
Let $F$ be a surface with a preferred puncture $p$, and let $N \cong F
\times \RR$ be a quasi-Fuchsian $3$--manifold. Let $C$ be the maximal
cusp corresponding to $p$.  Then
\begin{eqnarray*}
 \frac{  d_\arc(N,p)}{450\,  \chi(F)^4 } \,-\,  \frac{1}{23\,  \chi(F)^2 } 
   & < &  \area( \bdy C \cap \core(N)) \\
   & < & 9 \, \chi(F)^2 \, d_\arc(N,p) + 
      \Big| 12 \chi(F)   \ln \abs{ \chi(F) } + 26 \chi(F) \Big|
\end{eqnarray*}
Similarly, the height of the cusp relative to a longitude satisfies
\[
\frac{  d_\arc(N,p)}{536\,  \chi(F)^4 }  \,-\, \frac{1}{27\,
  \chi(F)^2 }\: < \:  \height( \bdy C \cap \core(N)) \: < \: -3 \,
  \chi(F) \, d_\arc(N,p) + 2\ln \abs{ \chi(F) } + 5.
\]
\end{theorem}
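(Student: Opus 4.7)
The plan is to adapt the argument for Theorem~\ref{thm:main} by replacing the periodic sweepout of $M_\mon$ with a finite sweepout of the convex core $\core(N)$ that begins at $\bdy_-\core(N)$ and ends at $\bdy_+\core(N)$. Both boundary surfaces are intrinsically hyperbolic and pleated along laminations, so they serve as the two endpoints of the sweepout exactly as two consecutive translates of a fiber serve as endpoints of a fundamental domain in the fibered case. The key structural facts from the fibered proof --- that a pleated surface passing through $p$ cuts off a controlled horocyclic wedge in $C$, and that disjoint essential arcs through $p$ can be realized as pleating loci of a common pleated surface (cf.\ Lemma~\ref{lemma:shortish-disjoint}) --- apply unchanged.

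For the upper bounds on $\area(\bdy C \cap \core(N))$ and $\height(\bdy C \cap \core(N))$, I would start from any edge path $\alpha_0, \alpha_1, \ldots, \alpha_m$ in $\arc(F,p)$ with $m = d_\arc(N,p)$, $\alpha_0 \in \Delta_-(N)$, and $\alpha_m \in \Delta_+(N)$. Each consecutive pair of disjoint arcs $(\alpha_i, \alpha_{i+1})$ is realized as the pleating locus of a pleated surface $\Sigma_i \subset \core(N)$ interpolating between $\bdy_-\core(N)$ and $\bdy_+\core(N)$, and each slab between $\Sigma_i$ and $\Sigma_{i+1}$ contributes at most $9\chi(F)^2$ to $\area(\bdy C \cap \core(N))$. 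Summing over the $m$ slabs yields the main multiplicative term $9\chi(F)^2 \, d_\arc(N,p)$, while the additive correction $\bigl|\, 12\chi(F) \ln\abs{\chi(F)} + 26\chi(F)\, \bigr|$ accounts for the extra area on $\bdy C$ between $\Sigma_0, \Sigma_m$ and the true frontier of $\core(N) \cap C$.

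For the lower bounds on area and height, the argument goes in the opposite direction: begin at $\bdy_-\core(N)$ and push a pleated surface through $\core(N)$ by flipping one arc through $p$ at a time across an edge of $\arc(F,p)$, ending at $\bdy_+\core(N)$. The resulting edge path in $\arc(F,p)$ from $\Delta_-(N)$ to $\Delta_+(N)$ has length at least $d_\arc(N,p)$. Since each slab of the sweepout contributes at least $1/(450\chi(F)^4)$ to $\area(\bdy C \cap \core(N))$ by the same horocyclic-wedge estimate as in the fibered setting, the total area is bounded below by $d_\arc(N,p)/(450\chi(F)^4)$, and the additive correction $1/(23\chi(F)^2)$ absorbs the two irregular extremal slabs adjacent to $\bdy_\pm \core(N)$. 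The height bounds follow from the area bounds by the same division-by-longitude-length argument used for Theorem~\ref{thm:main}.

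The step I expect to be the main obstacle is the quantitative treatment of the boundary pleated surfaces, which has no analogue in the fibered setting. On a hyperbolic surface of Euler characteristic $\chi(F)$, the maximal embedded horocycle at the puncture $p$ can have length as large as $O(\abs{\chi(F)} \ln \abs{\chi(F)})$, and its contribution to $\bdy C \cap \core(N)$ is not captured by any interior slab of the sweepout. Bounding this boundary contribution, and carefully comparing the intrinsic horocycle geometry of $\bdy_\pm \core(N)$ with the extrinsic cusp geometry of $N$ while keeping explicit numerical constants, is the most delicate step and is responsible for the additive error terms in both inequalities.
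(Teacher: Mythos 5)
Your outline reproduces the part of the argument that genuinely does carry over from the fibered case (the multiplicative terms, via the band estimates of Lemmas~\ref{lemma:height-segment} and~\ref{lemma:area-segment} applied to a geodesic path in $\arc(F,p)$ from $\Delta_-(N)$ to $\Delta_+(N)$), but at both places where the quasi-Fuchsian setting requires a new idea you either leave a gap or assume something false. For the upper bound, the additive term is exactly the area and height of the two bands between the circles where $\bdy_\pm \core(N)$ meet $\bdy C$ and the heights at which the endpoint arcs $a_0, a_k$ enter the cusp as geodesics of $N$; you acknowledge this is ``the most delicate step'' but supply no mechanism for it. The paper's mechanism is Proposition~\ref{prop:drifting-height} (proved via the tangent-horoball computation of Lemma~\ref{lemma:tangent-length}): a cusp-to-cusp path disjoint from the interior of $C$ drifts vertically by at most $\ell/2 + 0.54$ when pulled tight to a geodesic of $N$, which combined with the explicit bound $\ell \leq 2\ln\abs{\chi(F)} + \ln(18\sqrt{2})$ on the shortest arc of $\bdy_\pm\core(N)$ (Lemmas~\ref{lemma:length-bound} and~\ref{lemma:pleated-cusp-size}) yields the stated additive error. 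Without some such drift estimate your additive term is unjustified; also note your worry that the maximal horocycle has length $O(\abs{\chi(F)}\ln\abs{\chi(F)})$ is misplaced, since by Lemma~\ref{lemma:circle-packing-bound} it has length at most $-6\chi(F)$ --- the logarithm enters through arc lengths, not horocycle lengths.

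For the lower bound, you propose to ``begin at $\bdy_-\core(N)$ and push a pleated surface through $\core(N)$,'' but the convex core boundaries are pleated along laminations, typically with irrational leaves, and cannot appear as surfaces in the diagonal-exchange sweepout of Proposition~\ref{prop:qf-sweepout}, which interpolates only between surfaces pleated along ideal triangulations. The paper bridges this with Proposition~\ref{prop:core-bdy-approx} (resting on Theorem~\ref{thm:core-bdy-teich} in the appendix, the one non-elementary input): one can choose ideal triangulations whose pleated surfaces approximate $\bdy_\pm\core(N)$ in Teichm\"uller space well enough that their shortest arcs lie within distance $1$ of $\Delta_\pm(N)$, and then prepend and append arcs of $\Delta_\pm(N)$ to the sequence of Lemma~\ref{lemma:qf-arc-sequence}. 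Your proposal skips this entirely, so the path you produce is not known to start and end in $\Delta_\mp(N)$ and hence does not bound $d_\arc(N,p)$ from below. Separately, your description of the lower bound as ``each slab of the sweepout contributes at least $1/(450\chi(F)^4)$'' misstates the mechanism even in the fibered case: a slab between consecutive pleated surfaces can contribute essentially no cusp area (the heights need not be monotone). The actual count is per distinct arc, via the disjoint horoball shadows of Lemma~\ref{lemma:horoball-shadows} and a circle-packing bound, and the additive correction $1/(23\chi(F)^2)$ arises because those disks, whose centers lie in $\core(N)$, may overhang $\bdy C \cap \core(N)$ by at most one radius --- not from ``irregular extremal slabs.''
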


We note that the multiplicative constants in Theorem~\ref{thm:main-qf}
are exactly the same as in Theorem~\ref{thm:main}. However, in
addition to multiplicative error, the estimates in
Theorem~\ref{thm:main-qf} contain explicit additive error. This
additive error is necessary: for example, if the limit set of $N$ is
sufficiently close to a round circle, one may have $d_\arc(N,p) =
0$. (See Lemma~\ref{lemma:prescribed-qf} for a constructive argument.)
On the other hand, $\area(\bdy C \cap \core(N)) > 0$ whenever $N$ is
not Fuchsian.

Theorem~\ref{thm:main-qf} has an interesting relation to the work of
Akiyoshi, Miyachi, and Sakuma~\cite{akiyoshi-miyachi-sakuma}.  For a
quasi-Fuchian manifold $N$, they study the scale-invariant quantity
\[
\width(\bdy C) \: := \: \height(\bdy C) / \lambda \: = \: 
\area(\bdy C) / \lambda^2 
\]
where $\lambda$ is the Euclidean length of the longitude of cusp torus
$\bdy C$.  Generalizing McShane's identity, they give an exact
expression for $\width(\bdy C)$ as the sum of an infinite series
involving the complex lengths of closed curves created by joining the
endpoints of an arc.  It seems reasonable that most of the
contribution in this infinite sum should come from the finitely many
arcs in $F$ that are shortest in $N$.  Theorem~\ref{thm:main-qf}
matches this intuition, and indeed its lower bound is proved by
summing the contributions of finitely many short arcs.

\subsection{Covers and the arc complex}

Theorem~\ref{thm:main-qf} has an interesting application to the
geometry of arc complexes, whose statement does not involve
$3$--manifolds in any way.

\begin{define}
\label{def:lifting-map}
Suppose $f \from \Sigma \to S$ is an $n$--sheeted covering map of
surfaces.  We define a relation $\pi \from \arc(S) \to \arc(\Sigma)$
as follows: $\alpha \in \pi(a)$ if and only if $\alpha$ is a component
of $f^{-1}(a)$.  In other words, $\pi(a) \subset \arc(\Sigma)$ is the
set of all $n$ lifts of $a$, which span an $(n-1)$--simplex.
\end{define}

Definition~\ref{def:lifting-map} also applies to curve complexes, with
the (inessential) difference that the number of lifts of a curve is
not determined by the degree of the cover. In this context, Rafi and
Schleimer proved that $\pi \from \curve(S) \to \curve(\Sigma)$ is a
quasi-isometric embedding~\cite{rafi-schleimer:covers}. That is, there
exist constants $K \geq 1$ and $C \geq 0$, such that for all $a,b \in
\curve^{(0)}(S)$ and for all $\alpha \in \pi(a)$, $\beta \in \pi(b)$,
we have
\[
d(a,b) \leq K \, d(\alpha, \beta) + C
\quad \mbox{and} \quad
d(\alpha,\beta) \leq K \, d(a, b) + C.
\]
The constants $K$ and $C$ depend only on $S$ and the degree of the
cover, but are not explicit.

As a consequence of Theorem~\ref{thm:main-qf}, we prove a version of
the Rafi--Schleimer theorem for arc complexes, with explicit
constants.

\begin{theorem}\label{thm:lifting}
Let $\Sigma$ and $S$ be surfaces with one puncture, and $f \from
\Sigma \to S$ a covering map of degree $n$. Let $\pi \from \arc(S) \to
\arc(\Sigma)$ be the lifting relation.  Then, for all $a,b \in
\arc^{(0)}(S)$, we have
\[
\frac{d(a,b)}{4050 \, n \, \chi(S)^6} \,  - 2 \: < \: d(\alpha, \beta) \: \leq \: d(a, b)
\]
where $\alpha \in \pi(a)$ and $\beta \in \pi(b)$.
\end{theorem}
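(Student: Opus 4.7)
The plan is to combine the upper and lower area estimates from Theorem~\ref{thm:main-qf}, applied to a carefully built quasi-Fuchsian manifold $N_S \cong S \times \RR$ downstairs and to its cover $N_\Sigma \cong \Sigma \times \RR$ upstairs. The easy upper bound $d(\alpha, \beta) \leq d(a,b)$ is purely combinatorial: if $a = a_0, a_1, \ldots, a_k = b$ realizes the distance in $\arc(S)$, then disjointness of consecutive $a_i$ forces disjointness of their preimages, so $\pi(a_i) \cup \pi(a_{i+1})$ spans a simplex in $\arc(\Sigma)$. Choosing any lifts $\alpha_i \in \pi(a_i)$ with $\alpha_0 = \alpha$ and $\alpha_k = \beta$ gives a path of length $k$ in $\arc(\Sigma)$.

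For the lower bound, I would first invoke Lemma~\ref{lemma:prescribed-qf} (the constructive tool referenced in the introduction) to build $N_S$ with $\Delta_-(N_S) = \{a\}$ and $\Delta_+(N_S) = \{b\}$, so that $d_\arc(N_S, p) = d(a,b)$. Next, let $N_\Sigma$ be the cover corresponding to $\pi_1(\Sigma) < \pi_1(S)$; it is again quasi-Fuchsian, with $f \from N_\Sigma \to N_S$ restricting to a degree-$n$ local isometry on each side of $\core$. A cut-and-paste argument---if the projection $f(\tilde a^*)$ of a shortest simple arc upstairs self-intersected at $f(x_1) = f(x_2)$, splicing at this intersection would yield a strictly shorter essential arc upstairs---shows that the shortest simple-arc length on $\bdy_\pm \core(N_\Sigma)$ equals that on $\bdy_\pm \core(N_S)$. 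Hence $\pi(a) \subseteq \Delta_-(N_\Sigma)$ and $\pi(b) \subseteq \Delta_+(N_\Sigma)$, whence $d(\alpha, \beta) \geq d_\arc(N_\Sigma, \tilde p)$ for any $\alpha \in \pi(a), \beta \in \pi(b)$.

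For the cusp area, the preimage $f^{-1}(C_S)$ is an embedded cusp in $N_\Sigma$ contained in the maximal cusp $C_\Sigma$, and since horocycle area grows with cusp size, $\area(\bdy C_\Sigma \cap \core(N_\Sigma)) \geq n \cdot \area(\bdy C_S \cap \core(N_S))$. Plugging the lower area bound of Theorem~\ref{thm:main-qf} for $N_S$ (namely $\area(\bdy C_S \cap \core(N_S)) > \frac{d(a,b)}{450 \chi(S)^4} - \frac{1}{23 \chi(S)^2}$) into the upper $d_\arc$ bound of Theorem~\ref{thm:main-qf} for $N_\Sigma$, and using $\chi(\Sigma) = n \chi(S)$, routine arithmetic yields
\[
d_\arc(N_\Sigma, \tilde p) \;>\; \frac{d(a, b)}{4050 \, n \, \chi(S)^6} - 2,
\]
where the $-2$ absorbs the $\ln|\chi(\Sigma)|$ additive error from the $N_\Sigma$ estimate together with the $1/(23 \chi(S)^2)$ additive term from the $N_S$ estimate.

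The main obstacle is the equality of shortest-arc lengths on $\bdy_\pm \core$ upstairs and downstairs---equivalently, the inclusion $\pi(a) \subseteq \Delta_-(N_\Sigma)$. The cut-and-paste move must be verified in the non-regular cover setting, which is delicate but essentially standard. A secondary concern is checking that the explicit additive constants from both applications of Theorem~\ref{thm:main-qf} really fit within the $-2$ slack; the case $n = 1$ is trivial and handles the low-complexity corners where the additive errors are largest.
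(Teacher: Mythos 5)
Your proposal is correct and follows essentially the same route as the paper: build the quasi-Fuchsian manifold downstairs via Lemma~\ref{lemma:prescribed-qf} so that $a,b$ are the unique shortest arcs on $\bdy_\pm\core$, lift the structure to the cover, use $n$--fold multiplicativity of cusp-boundary area, and combine the lower bound of Theorem~\ref{thm:main-qf} downstairs with the upper bound upstairs, using $\chi(\Sigma)=n\chi(S)$ and $\abs{\chi(\Sigma)}\geq 3$ to absorb the additive error into $-2$. The only divergence is your cut-and-paste step for $\pi(a)\subseteq\Delta_-(N_\Sigma)$: the paper gets this directly, since the covering restricts to a local isometry of convex core boundaries, so lifts of shortest arcs have the same length and any arc upstairs projects to an essential cusp-to-cusp path downstairs of equal length, making the splicing argument unnecessary.
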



\subsection{An outline of the arguments}
\label{subsec:outline}

The proofs of Theorems~\ref{thm:main} and~\ref{thm:main-qf} have a
decidedly elementary flavor. The primary tool that we use repeatedly
is the geometry of pleated surfaces, as developed by
Thurston~\cite{thurston:notes}. (See Bonahon~\cite{bonahon:lamination}
or Canary, Epstein, and Green~\cite{ceg:notes-on-notes} for a detailed
description.)  In our context, a pleated surface is typically a copy
of the fiber $F$ with a prescribed hyperbolic metric, immersed into
$M$ in a piecewise geodesic fashion, and bent along an ideal
triangulation of $F$.  In Sections~\ref{sec:pleated}
and~\ref{sec:cone-surface} below, we give a detailed definition of
pleated surfaces and discuss the geometry of cusp neighborhoods in
such a surface. We also study a mild generalization of pleated
surfaces, called simplicial hyperbolic surfaces, that are hyperbolic
everywhere except for a single cone point with angle at least $2\pi$.

The upper bounds of Theorems~\ref{thm:main} and~\ref{thm:main-qf} are
proved in Section~\ref{sec:upper} and~\ref{sec:upper-qf},
respectively. To sketch the argument in the fibered case, let $\tau$
be an ideal triangulation of the fiber $F$. Then $F$ can be homotoped
to a pleated surface, $F_\tau$, in which every ideal triangle is
totally geodesic.  Using lemmas in Sections~\ref{sec:pleated}
and~\ref{sec:cone-surface}, we show that the intersection $F_\tau \cap
\bdy C$ gives a closed polygonal curve about the puncture $p$, whose
length is bounded by $-6 \chi(F)$. As a result, the pleated surface
$F_\tau$ makes a bounded contribution to the area and height of $\bdy
C$. Summing up the contributions from a sequence of triangulations
that ``realize'' the monodromy $\mon$ gives the desired upper bound of
Theorem~\ref{thm:main}. The upper bound of Theorem~\ref{thm:main-qf}
uses very similar ideas; the one added ingredient is a bound on how
far a short arc in $\bdy_\pm \core(N)$ drifts when it is pulled tight,
making it geodesic in $N$.

The lower bounds on cusp area and height rely on the idea of a
geometrically controlled \emph{sweepout}. This is a degree-one map
$\Psi \from F \times [0,1]/ \mon \to M_\mon$, in which every fiber $F
\times \{ t \}$ in the domain is mapped to a piecewise geodesic
surface $F_t \subset M$, which is either pleated or simplicial
hyperbolic. The elementary construction of such a sweepout, which is
due to Thurston~\cite{thurston:notes} and Canary
\cite{canary:covering}, is recalled in Section~\ref{sec:sweepouts}.

The lower bound of Theorem~\ref{thm:main} is proved in Section
\ref{sec:lower}. We show that every piecewise geodesic surface $F_t$
in the sweepout of Section~\ref{sec:sweepouts} must contain an an arc
from cusp to cusp whose length is explicitly bounded above. As the
parameter $t$ moves around the sweepout, we obtain a sequence of arcs,
representing a walk through the $1$--skeleton $\arc^{(1)}(F,p)$, such
that each arc encountered has bounded length in $M$.  This sequence of
somewhat-short arcs in the fiber leads to a packing of the cusp torus
$\bdy C$ by shadows of somewhat-large horoballs, implying a lower
bound on $\area(\bdy C)$ and $\height(\bdy C)$.

It is worth emphasizing that the entire proof of Theorem
\ref{thm:main} is elementary in nature. In particular, this proof does
not rely on any deep results from Teichm\"uller theory, Kleinian groups,
or the coarse geometry of the curve or arc complexes.


The lower bound of Theorem~\ref{thm:main-qf} is proved in Section
\ref{sec:lower-qf}, using very similar ideas to those of Theorem
\ref{thm:main}. Once again, we have a sweepout $\Psi \from F \times 
[0,r] \to \core(N)$ by simplicial hyperbolic surfaces. Once again,
each surface $F_t$ in the sweepout contains a somewhat-short arc from
cusp to cusp, corresponding to a horoball whose shadow contributes
area to $\bdy C$.  However, we also need to know that the pleated
surfaces at the start and end of the sweepout can be chosen
arbitrarily close to $\bdy_\pm \core(N)$. This fact, written down as
Theorem~\ref{thm:core-bdy-teich} in the appendix, is the one place in
the paper where we need to reach into the non-elementary toolbox of
Kleinian groups.

Finally, in Section~\ref{sec:covers}, we prove
Theorem~\ref{thm:lifting}.  Given a cover $\Sigma \to S$, vertices $a,
b$ of $\arc(S)$, and vertices $\alpha \in \pi(a)$, $\beta \in \pi(b)$,
the upper bound on the distance $d(\alpha, \beta)$ is immediate
because disjoint arcs lift to disjoint multi-arcs.  To prove a lower
bound, we construct a quasi-Fuchsian manifold $M \cong S \times \RR$,
so that $a$ and $b$ are the unique shortest arcs on its convex core
boundaries. The hyperbolic metric on $M \cong S \times \RR$ lifts to a
quasi-Fuchsian structure on $N \cong \Sigma \times \RR$. By applying
Theorem~\ref{thm:main-qf} to both $M$ and $N$, we will bound
$d(\alpha, \beta)$ from below.

\subsection{Acknowledgments} 
This project began at the University of Warwick symposium on \emph{Low
Dimensional Geometry and Topology}, in honor of David Epstein, and
continued at the MSRI program in \emph{Teichm\"uller Theory and
Kleinian Groups}. We thank the organizers of both events for creating
such a fertile ground for collaboration.

We thank Ian Agol for numerous helpful conversations, and in
particular for contributing the key idea of Lemma
\ref{lemma:length-bound}. We thank Dick Canary and Yair Minsky for
clarifying a number of points about pleated surfaces, and for helping
us sort out the proof of Theorem~\ref{thm:core-bdy-teich}. We thank
Marc Lackenby for continually encouraging us to make our estimates
effective.


\section{Pleated surfaces and cusps}
\label{sec:pleated}

To prove the upper and lower bounds in our main theorems, we need a
detailed understanding of the geometry of pleated surfaces in a
hyperbolic $3$--manifold. In this section, we survey several known
results about pleated surfaces. We also describe the somewhat subtle
geometry of the intersection between a pleated surface and a cusp
neighborhood in a $3$--manifold $N$. The study of pleated surfaces is
continued in Section~\ref{sec:cone-surface}, where we obtain several
geometric estimates.

References for this material include Bonahon~\cite{bonahon:lamination}
and Canary, Epstein, and Green~\cite{ceg:notes-on-notes}.

\begin{define}
\label{def:lamination}
Let $S$ be a surface, as in Definition~\ref{def:arc-complex}.  A
\emph{lamination} $\lamm \subset S$ is a $1$--dimensional foliation of
a closed subset of $S$.  
\end{define}

A special case of a lamination is the union of the edges of an ideal
triangulation; this special case appears frequently in our setting.

\begin{define}
\label{def:pleated}
Let $N$ be a hyperbolic $3$--manifold, and let $S$ be a surface.  Fix
a proper map $f_0 \from S \to N$, sending punctures to cusps.  For a
lamination $\lamm \subset S$ a \emph{pleating} of $f_0$ along $\lamm$,
or a \emph{pleating map} for short, is a map $f \from S \to N$,
properly homotopic to $f_0$, such that
\begin{enumerate}
\item
$f$ maps every leaf of $\lamm$ to a hyperbolic geodesic, and
\item 
$f$ maps every component of $S \setminus \lamm$ to a totally geodesic
surface in $N$.
\end{enumerate}
We say that $f$ \emph{realizes} the lamination $\lamm$, and call its
image $f(S)$ a \emph{pleated surface}.
\end{define}

Note the existence of a pleating map places restrictions on $\lamm$;
for instance, every closed leaf of $\lamm$ must be essential and
non-peripheral in $S$.  The hyperbolic metric on $N$, viewed as a
path-metric, pulls back via $f$ to induce a complete hyperbolic metric
on $S$.  In this induced hyperbolic metric, every leaf of $\lamm$
becomes a geodesic, and the map $f \from S \to N$ becomes a piecewise
isometry, which is bent along the geodesic leaves of $f(\lamm)$.

\begin{lemma}
\label{Lem:Pleated}
Every pleated surface $f(S)$ is contained in the convex core of $N$.
\end{lemma}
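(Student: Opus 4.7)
The plan is to work in the universal cover $\HH^3 \to N$ and show $\tilde f(\tilde S) \subset \mathrm{Hull}(\Lambda(\Gamma))$, where $\Gamma = \pi_1(N)$ and $\Lambda(\Gamma)$ is its limit set; since $\core(N)$ is by definition the $\Gamma$-quotient of this convex hull, the lemma follows at once. I would pick a lift $\tilde f \from \tilde S \to \HH^3$ and let $\tilde \lamm \subset \tilde S$ denote the full preimage of the pleating lamination.

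By Definition~\ref{def:pleated}, $\tilde f$ sends each leaf of $\tilde\lamm$ to a complete geodesic in $\HH^3$ and each component of $\tilde S \setminus \tilde\lamm$ into a totally geodesic copy of $\HH^2$. Because each complementary region of $\tilde\lamm$ is convex in the induced hyperbolic metric on $\tilde S$, its image is convex inside the geodesic plane that carries it, and is therefore equal to the convex hull in $\HH^3$ of its own ideal boundary in $\bdy \HH^3$. Consequently, it is enough to prove that every ideal accumulation point of $\tilde f(\tilde S)$ lies in $\Lambda(\Gamma)$.

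To do this, take any $z = \lim_n \tilde f(x_n)$ with $x_n \to \infty$ in $\tilde S$. Either the projections of $x_n$ to $S$ escape into a puncture, in which case $z$ is a parabolic fixed point of $\Gamma$ (because the properly-homotoped map $f$ sends punctures to cusps); or else, after applying covering transformations $\sigma_n \in \pi_1(S)$, the points $\sigma_n^{-1} x_n$ stay in a compact subset of $\tilde S$. In the latter case $f_*(\sigma_n)^{-1} \tilde f(x_n)$ stays bounded in $\HH^3$, so $f_*(\sigma_n)(\ast) \to z$ for a basepoint $\ast$, giving $z \in \Lambda(\Gamma)$. Either way $z \in \Lambda(\Gamma)$, and projecting the resulting containment $\tilde f(\tilde S) \subset \mathrm{Hull}(\Lambda(\Gamma))$ back down to $N$ yields $f(S) \subset \core(N)$.

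The main obstacle is handling a generic leaf of $\tilde \lamm$, one that is neither closed in $S$ nor asymptotic to a cusp, because we must place \emph{both} of its ideal endpoints in $\Lambda(\Gamma)$ without invoking density or minimality properties of the lamination. The asymptotic-limit argument of the previous paragraph, applied directly to points marching along such a leaf to infinity, handles all leaf types uniformly and keeps the proof in the elementary spirit emphasized in Section~\ref{subsec:outline}.
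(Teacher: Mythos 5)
Your overall strategy is the same as the paper's: pass to $\HH^3$, show that every ideal accumulation point of the lifted image lies in $\Lambda(\Gamma)$ (your cusp-escape versus translate-back-to-a-compact-set dichotomy is exactly the paper's ``runs out a cusp or meets a small metric ball infinitely many times''), and finish by convexity of the hull. The one step that is not adequately justified is the claim that each complementary region of $\tilde{\lamm}$, being convex, ``is therefore equal to the convex hull in $\HH^3$ of its own ideal boundary.'' Convexity alone does not yield this: a metric disk or a horoball inside a totally geodesic plane is convex, yet the first has no ideal points and the second is not the hull of its single ideal point. What makes the claim true for complementary regions is the stronger structure that they are intersections of half-planes bounded by complete geodesic leaves, so every frontier point lies on a boundary leaf whose two ideal endpoints belong to the region's ideal set; from that one can deduce, via a short separation argument, that the closed region equals the hull of its ideal boundary. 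You must either supply that argument or sidestep it as the paper does: add leaves to $\lamm$ so that all complementary regions become ideal triangles (the first sentence of the paper's proof), after which each piece is trivially the hull of its three ideal vertices and only the endpoints of the triangle edges need to be placed in $\Lambda(\Gamma)$.

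Two smaller points. In the cusp-escape case the limit $z$ need not itself be a parabolic fixed point, since the $x_n$ may lie in infinitely many distinct horoball lifts; it is, however, a limit of parabolic fixed points, hence still lies in $\Lambda(\Gamma)$ because the limit set is closed. Also, your dichotomy is exhaustive only after passing to a subsequence (the projections to $S$ could alternate between a cusp neighborhood and the thick part); this is harmless, since every subsequence of $\tilde f(x_n)$ still converges to $z$, but it should be said.
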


\begin{proof}
Adding leaves as needed to subdivide the totally geodesic regions, we
can arrange for the complement $S \setminus \lamm$ to consist of ideal
triangles.  Fix $g$, a side of some ideal triangle of $S \setminus
\lamm$.  Thus $g$ is a bi-infinite geodesic; each end of the image
geodesic $f(g)$ either runs out a cusp of $N$ or meets a small metric
ball infinitely many times.  In either case, the lift of $f(g)$ to the
universal cover $\widetilde{N} = \HH^3$ has both endpoints at limit
points of $N$.  Since an ideal triangle in $\HH^3$ is the convex hull
of its vertices, the surface $f(S)$ is contained in $\core(N)$.
\end{proof}

In a quasi-Fuchsian manifold $N$, the components of $\bdy_\pm
\core(N)$ are themselves pleated surfaces.  In this paper, the convex
core boundaries are the only examples of pleated surfaces where the
pleating laminations are not ideal triangulations.

A foundational result is that every essential surface $S \subset N$
can be pleated along an arbitrary ideal triangulation. This was first
observed by Thurston~\cite[Chapter 8]{thurston:notes}. For a more
detailed account of the proof, see Canary, Epstein, and Green
\cite[Theorem 5.3.6]{ceg:notes-on-notes} or Lackenby~\cite[Lemma
2.2]{lackenby:surgery}.

\begin{prop}
\label{prop:pleating}
Let $N$ be a cusped orientable hyperbolic $3$--manifold. Let $f_0
\from S \to N$ be a proper, essential map, sending punctures to cusps.
Then, for any ideal triangulation $\tau$ of $S$, the map $f_0$ is
homotopic to a pleating map along $\tau$.

In other words, every ideal triangulation $\tau$ is realized by a
pleating map $f_\tau \from S \to N$.
\qed
\end{prop}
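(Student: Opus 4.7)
The plan is to construct the pleating map $f_\tau$ by lifting to the universal cover $\widetilde{N} = \HH^3$, straightening in turn the ideal vertices, edges, and faces of $\tau$, and then descending equivariantly. Choose a lift $\tilde{f}_0 \from \widetilde{S} \to \HH^3$ equivariant with respect to the induced homomorphism $\rho := (f_0)_* \from \pi_1(S) \to \pi_1(N)$, and let $\tilde\tau$ be the corresponding lifted ideal triangulation.

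First, I would define $\tilde{f}_\tau$ on the ideal vertices of $\tilde\tau$. Each such vertex $p \in \bdy \widetilde{S}$ is fixed by a maximal parabolic subgroup $P \leq \pi_1(S)$; since $f_0$ is essential and sends punctures to cusps, $\rho(P)$ is an infinite cyclic parabolic subgroup of $\pi_1(N)$ and has a unique fixed point on $\bdy \HH^3$, which we declare to be $\tilde{f}_\tau(p)$. Next, for each edge $e$ of $\tilde\tau$ with ideal endpoints $p_1 \neq p_2$, the two stabilizers are distinct parabolic subgroups (they are conjugate by a nontrivial element of $\pi_1(S)$, whose $\rho$-image is nontrivial by essentiality), so $\tilde{f}_\tau(p_1) \neq \tilde{f}_\tau(p_2)$ on $\bdy \HH^3$; define $\tilde{f}_\tau(e)$ to be the unique hyperbolic geodesic joining them. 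Finally, for each ideal triangle $T$ of $\tilde\tau$ with vertices $p_1, p_2, p_3$, the three images are three pairwise distinct points on $\bdy \HH^3$ and thus span a unique totally geodesic ideal triangle in $\HH^3$; let $\tilde{f}_\tau$ map $T$ onto this triangle by the canonical affine identification of ideal triangles.

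The construction used only the data of $\rho$ and the combinatorics of $\tilde\tau$, so it is equivariant with respect to the $\pi_1(S)$-actions and descends to a map $f_\tau \from S \to N$ that sends each edge of $\tau$ to a geodesic and each triangle to a totally geodesic ideal triangle; this is exactly the pleating condition of Definition~\ref{def:pleated}. To see that $f_\tau$ is properly homotopic to $f_0$, I would construct the homotopy $H \from S \times [0,1] \to N$ cell by cell: on each vertex, by sliding $\tilde{f}_0$'s image out toward the prescribed fixed point along a horoball; on each edge, by the geodesic straight-line homotopy in $\HH^3$ (which stays bounded distance from $\tilde{f}_0(e)$ and converges into the appropriate cusps at the ends); and on each triangle, by convex combinations in $\HH^3$. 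Equivariance of each stage gives a well-defined homotopy in $N$, and properness follows from the fact that each end of $H$ is trapped inside a fixed horoball neighborhood of the corresponding cusp.

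The main obstacle, and the place where essentiality enters, is ensuring the non-degeneracy needed at each step: that distinct ideal vertices of $\tilde\tau$ map to distinct points on $\bdy \HH^3$, so that edges are actual geodesics (not collapsed to points) and triangles have non-empty interior. Were $f_0$ allowed to be inessential, two edges of a triangle meeting at the same puncture could be sent to parabolic subgroups with the same fixed point, collapsing the triangle; essentiality, together with the fact that distinct cusp-lifts in $\widetilde{S}$ are stabilized by non-conjugate (hence distinct as subgroups of $\pi_1(N)$) parabolics, is exactly what rules this out. Once non-degeneracy is established, the three straightening steps and the proper homotopy go through by direct hyperbolic geometry in $\HH^3$.
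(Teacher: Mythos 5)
The paper itself does not prove this proposition; it cites Thurston, Canary--Epstein--Green, and Lackenby, so I am comparing your attempt to the standard straightening argument, which is indeed the route you take (straighten ideal vertices, then edges, then faces, equivariantly in $\HH^3$). The genuine gap is in the one step you yourself identify as crucial: the non-degeneracy of the straightened edges. Your justification --- ``the two stabilizers are conjugate by a nontrivial element $g$ of $\pi_1(S)$, whose $\rho$--image is nontrivial, hence the fixed points are distinct'' --- does not work. First, if the two ideal endpoints of an edge are lifts of \emph{different} punctures of $S$, their stabilizers are not conjugate in $\pi_1(S)$ at all. Second, and more seriously, when $P_2 = gP_1g^{-1}$ the fixed point of $\rho(P_2)$ is $\rho(g)$ applied to the fixed point $q$ of $\rho(P_1)$, so the two fixed points coincide exactly when $\rho(g)$ lies in the maximal parabolic subgroup $Q = \operatorname{Stab}_{\pi_1(N)}(q)$; nontriviality of $\rho(g)$ does not rule this out, since $\rho(g)$ could itself be a parabolic fixing $q$. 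Likewise, your parenthetical ``non-conjugate, hence distinct as subgroups of $\pi_1(N)$'' proves nothing about fixed points: distinct parabolic subgroups of $\pi_1(N)$ sharing a fixed point are abundant (any two subgroups of a single rank-one or rank-two cusp group). So as written, the argument does not exclude the collapse of an edge, which is precisely the degenerate case the hypothesis ``essential'' must forbid.

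The repair is a short but different argument. Suppose the images of the two endpoint stabilizers $P_1, P_2'$ (the second conjugated back along the edge) fixed the same point $q \in \bdy\HH^3$. In a discrete, torsion-free Kleinian group the stabilizer $Q$ of a parabolic fixed point consists of parabolics and is abelian ($\ZZ$ or $\ZZ\times\ZZ$), so $\rho(\langle P_1, P_2'\rangle) \subset Q$ would be abelian. But $P_1$ and $P_2'$ are maximal parabolic subgroups of the Fuchsian group $\pi_1(S)$ with \emph{distinct} fixed points on $\bdy\HH^2$, and two parabolics of a Fuchsian group with distinct fixed points never commute, so $\langle P_1, P_2'\rangle$ is nonabelian; $\pi_1$--injectivity of $f_0$ then gives a contradiction. (Equivalently: essentiality prevents any essential arc of $S$ from having image properly homotopic into a cusp of $N$.) With non-degeneracy established this way, your vertex/edge/face straightening is the standard construction; I would only add that the continuity and properness bookkeeping is less automatic than your sketch suggests --- in particular, mapping each triangle by ``the'' isometry matching vertices need not agree along a shared edge (the two restrictions can differ by a translation of the geodesic, measured by shear), so one must fix the edge parametrizations first and then extend over triangles, as in the careful treatments of Canary--Epstein--Green and Lackenby that the paper cites.
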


Suppose that $C \subset N$ is a horospherical cusp neighborhood in
$N$, and $f \from S \to N$ is a pleating map. Our goal is to describe
the geometry of $f(S) \cap C$.  We offer Figure~\ref{fig:pleated} as a
preview of the geometric picture.  The figure depicts a lift
$\widetilde{S}$ of a pleated surface to $\HH^3$. For a sufficiently
small horocusp $C_0 \subset C$, which lifts to horoball $H_0$ in the
figure, the intersection $f(S) \cap C_0$ is \emph{standard}, meaning
that $f^{-1}(C_0)$ is a union of horospherical cusp neighborhoods in
$S$. The intersection $f(S) \cap C$ is more complicated, because the
surface is bent along certain geodesics whose interior intersects $C
\setminus C_0$. Nevertheless, we can use the geometry of $f(S) \cap
C_0$ to find certain cusp neighborhoods contained in $f^{-1}(C)$ (in
Lemma~\ref{lemma:pleated-cusp}), and certain geometrically meaningful
closed curves in $\bdy C$ (in Lemma~\ref{lemma:projection-curve}).

\begin{figure}
\begin{overpic}[height=2.5in]{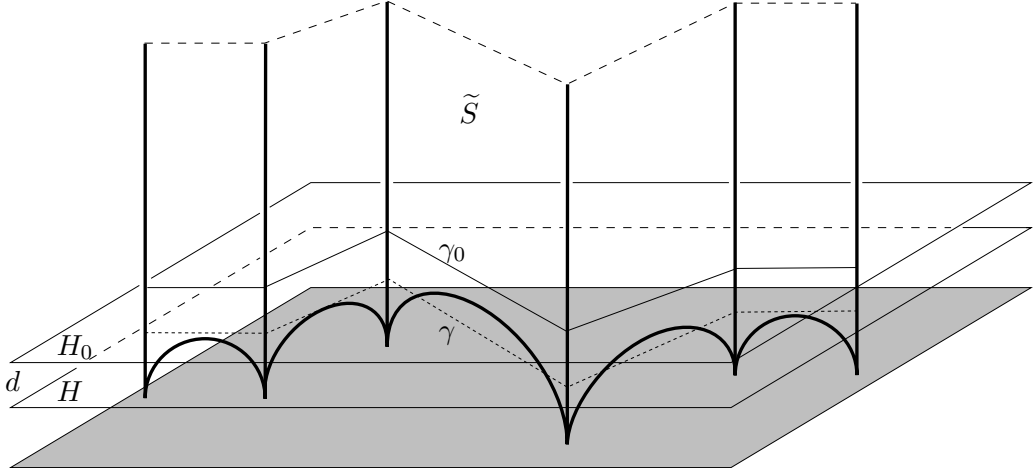}
\put(5,11.5){$H_0$}
\put(5,7){$H$}
\put(0,8){$d$}
\put(44,34){$\widetilde{S}$}
\put(42,13){$\gamma$}
\put(42,21){$\gamma_0$}
\end{overpic}
\caption{The intersection between a pleated surface and a
horocusp. The intersection with horoball $H_0$ is standard, whereas
the intersection with $H$ may contain portions of the surface bent
along geodesics whose endpoints are not in $H$. Graphic based
on a design of Agol~\cite[Figure~1]{agol:6theorem}.}
\label{fig:pleated}
\end{figure}

\begin{lemma}
\label{lemma:pleated-cusp}
Let $N$ be a cusped orientable hyperbolic $3$--manifold, with a
horocusp $C$. Let $f \from S \to N$ be a pleating map, such that $n$
punctures of $S$ are mapped to $C$. Suppose that a loop about a
puncture of $S$ is represented by a geodesic of length $\lambda$ on
$\bdy C$.  Then, in the induced hyperbolic metric on $S$, the preimage
$f^{-1} (C) \subset S$ contains horospherical cusp neighborhoods $R_1,
\ldots, R_n$ with disjoint interiors, such that
\[
\ell(\bdy R_i) \: = \: \area(R_i) \: \geq \: 
   \lambda \quad \mbox{for each $i$.}
\]
\end{lemma}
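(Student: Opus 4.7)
The plan is to construct the horocusps $R_i$ puncture by puncture. Fix a puncture $p = p_i$ of $S$ mapping to $C$, and let $R = R_i$ be the horocusp around $p$ in $S$ whose boundary has length $\lambda$. I will argue $f(R) \subset C$; the identity $\ell(\bdy R) = \area(R) = \lambda$ is then the standard formula for hyperbolic horocusps, so the area claim follows.

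The main work takes place in the universal cover. Let $\tilde p \in \bdy \HH^2$ be a lift of $p$, stabilized by a parabolic $\gamma \in \pi_1(S)$. Under the lifted pleating map $\tilde f \from \HH^2 \to \HH^3$, the point $\tilde f(\tilde p)$ is the center of a horoball $H \subset \HH^3$ covering $C$, and $\rho(\gamma)$ is parabolic at that center, with translation length $\lambda$ on $\bdy H$. I would choose upper half-space coordinates so that $\tilde p = \infty$ and $\gamma$ acts by $x \mapsto x + \lambda$ in $\HH^2$, while $\tilde f(\tilde p) = \infty$, $H = \{z \geq 1\}$, and $\rho(\gamma)$ is a Euclidean translation of length $\lambda$ in $\HH^3$. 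Then $\tilde R = \{y \geq 1\}$ is a lift of $R$, and it suffices to prove $\tilde f(\tilde R) \subset H$.

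The argument uses the structure of the pleating near $\tilde p$. Leaves of the pleating lamination with an endpoint at $\tilde p$ are vertical geodesics in the upper half-plane, and $\tilde f$ sends them to vertical geodesics in $\HH^3$ (with endpoint at $\infty$). The complementary ideal triangles (each with vertex at $\tilde p$) are vertical strips in $\HH^2$ and map isometrically to vertical strips in $\HH^3$; a direct calculation shows each such isometry scales heights by a factor $w_j'/w_j$, where $w_j$ is the Euclidean width of the $j$-th strip in $\HH^2$ and $w_j'$ is the Euclidean width of its image in $\HH^3$. Continuity of $\tilde f$ across the shared vertical edges forces these ratios to all equal a single constant $c$. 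Summed over one fundamental domain of $\gamma$, this gives $c \cdot \lambda = \sum_j w_j' \geq \lambda$, where the inequality is the triangle inequality in $\mathbb{C}$ applied to the $w_j'$ viewed as consecutive vectors whose sum is the translation of $\rho(\gamma)$ of length $\lambda$. Hence $c \geq 1$, so the horocycle $\{y = y_0\}$ maps into $\{z = c y_0\} \subset H$ for every $y_0 \geq 1$, giving $\tilde f(\tilde R) \subset H$.

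For disjointness of the $R_i$, lifts in $\HH^2$ of horocusps around different punctures map into distinct horoballs of the lift of $C$, which are pairwise disjoint since $C$ is embedded; any intersection of the lifts would have to map into an empty intersection of horoballs. The main obstacle is the height-scaling argument above: one must ensure it remains valid when the horocycle $\{y = 1\}$ crosses non-vertical pleating leaves (finite geodesics in $\HH^2$ not reaching $\tilde p$). I would address this by first establishing the claim for $\{y \geq y_0\}$ with $y_0$ large enough to avoid all non-vertical leaves, and then extending to $y_0 = 1$ by a continuity or compactness argument that respects the $\gamma$-invariance of the setup.
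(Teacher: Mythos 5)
Your strip computation in the upper half-space model is essentially sound: the triangles of the (WLOG) triangulation incident to $\tilde p$ map to vertical strips, continuity along the shared vertical edges forces a single height-scaling constant $c$, and the telescoping of the widths over a period of $\gamma$ together with the triangle inequality in $\CC$ does give $c \geq 1$. (The paper reaches the analogous conclusion differently, by noting that $f(\bdy R_i^0)$ is a curve on the flat boundary of a small standard horocusp $C_0$, homotopic to its core geodesic and hence no shorter.) However, your proof has a genuine gap exactly at the step you flag as ``the main obstacle,'' and the proposed fix does not work as stated. The horoball $\{y \geq 1\}$ generally does meet triangles not incident to $\tilde p$ (this happens whenever some strip has width greater than $2$, which certainly occurs when $\lambda$ is large), and passing from $\{y \geq y_0\}$ down to $\{y \geq 1\}$ by ``a continuity or compactness argument'' is not enough: continuity gives no metric control on how far the image of the intermediate region can dip below $H$, and compactness modulo $\gamma$ yields only the existence of \emph{some} lower bound on image heights, not the bound $\geq 1$ that you need. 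The missing ingredient is quantitative: one must use that the pleating map is $1$--Lipschitz, together with the fact that log-height (the Busemann function at $\infty$) is $1$--Lipschitz, so that a point at height $y(x) < y_0$ lies at distance $\ln(y_0/y(x))$ from $\{y \geq y_0\}$ and its image therefore has height at least $c\, y_0 \, e^{-\ln(y_0/y(x))} = c\, y(x) \geq 1$. This Lipschitz-plus-exponential-growth mechanism (in the paper, the $d$--neighborhood of $R_i^0$ growing by $e^d$) is the heart of the paper's proof and is absent from your proposal.

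A secondary gap is the embeddedness and disjointness claim. You assert that lifts of horocusps around different punctures ``map into distinct horoballs,'' but distinct punctures of $S$ mapped to the same cusp $C$ can have lifts landing in the \emph{same} horoball covering $C$ (and when $C$ is a rank-two cusp, as in the fibered application, elements of $\rho(\pi_1 S)$ outside the relevant peripheral subgroup can stabilize that horoball), so ``empty intersection of distinct horoballs'' does not settle the matter; moreover, embeddedness of a single $R_i$ in $S$ (no overlap among $\pi_1(S)$--translates of $\tilde R$ outside $\langle\gamma\rangle$) is not addressed at all. These points can be repaired — either by a group-theoretic argument about stabilizers of the horoball, or, as in the paper, by building each $R_i$ as a $d$--neighborhood of a component of $f^{-1}(C_0)$ and using the $2d$--separation of $f^{-1}(C_0)$ coming from the embeddedness of $C$ and the $1$--Lipschitz property — but as written they are not established.
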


Our proof is inspired by an argument of
Agol~\cite[Theorem~5.1]{agol:6theorem}. 

\begin{proof}[Proof of Lemma~\ref{lemma:pleated-cusp}]
Without loss of generality, assume that the pleating lamination
$\lamm$ cuts $S$ into ideal triangles. (Otherwise, add more leaves to
$\lamm$.) Let $C_0 \subset C$ be a horocusp chosen sufficiently small
so that $C_0 \cap f(\lamm)$ is a union of non-compact rays into the
cusp. Then $f^{-1}(C_0) $ is a union of tips of ideal triangles in $S$
and consists of disjoint horospherical neighborhoods $R_1^0, \ldots,
R_n^0$, each mapped into $C$.

Lift $N$ to its universal cover $\HH^3$, so that $C_0$ lifts to a
horoball $H_0$ about $\infty$ in the upper half-space model. Then
$\widetilde{f(S)}$ intersects $H_0$ in vertical bands, as shown in
Figure~\ref{fig:pleated}.

Let $d$ be the distance in $N$ between $\bdy C_0$ and $\bdy C$. Since
the interior of $C$ is embedded, this means that the shortest geodesic
in $N$ from $C_0$ to $C_0$ has length at least $2d$. Since the
pleating map $f \from S \to N$ is distance-decreasing, the shortest
geodesic in $S$ from $f^{-1} (C_0)$ to itself also has length at least
$2d$. In other words, we may take a closed $d$--neighborhood of each $
R_i^0 $ and obtain a cusp neighborhood $R_i$, such that $R_1, \ldots,
R_n$ have disjoint interiors.

Consider the areas of these neighborhoods, along with their boundary
lengths.  A standard calculation in the upper half-plane model of
$\HH^2$ implies that the length of a horocycle in $S$ equals the area
of the associated cusp neighborhood. Furthermore, both quantities grow
exponentially with $d$.

On $\bdy C_0$, a Euclidean geodesic about a puncture of $S$ has length
$e^{-d} \lambda$. Since $f(S) \cap C_0$ may not be totally geodesic
(in general, it is bent, as in Figure~\ref{fig:pleated}), each curve
of $\bdy R_i^0$ has length bounded below by $e^{-d} \lambda$. These
lengths grow by $e^d$ as we take a $d$--neighborhood of $\cup_i
R_i^0$. We conclude that each component $R_i$ satisfies
\begin{equation}\label{eq:distance-d}
\ell(\bdy R_i) \: = \: \area(R_i) \: \geq \: 
    e^d \cdot e^{-d} \cdot  \lambda \: = \: \lambda.
\end{equation}

It remains to show that $f(R_i) \subset C$ for each $i$. Suppose,
without loss of generality, that $R_1^0$ is the component of
$f^{-1}(C_0)$ whose lift is mapped to the horoball $H_0$. Then, by
construction, the lift of $R_1$ is mapped into the $d$--neighborhood
of $H_0$, which is a horoball $H$ covering $C$. Thus $f(R_1) \subset
C$. Note that the containment might be strict, because $f(S)$ might be
bent along some geodesics in the region $C \setminus C_0$, as in the
middle of Figure~\ref{fig:pleated}.
\end{proof}

The argument of Lemma~\ref{lemma:pleated-cusp} also permits the
following construction, which is also important for Section
\ref{sec:upper}.

\begin{lemma}
\label{lemma:projection-curve}
Let $N$ be a cusped orientable hyperbolic $3$--manifold, with a
horocusp $C$. Let $f \from S \to N$ be a pleating map that realizes an
ideal triangulation $\tau$. Then, for each puncture $p$ of $S$ that is
mapped to $C$, there is an immersed closed curve $\gamma = \gamma(f,
p, C)$, piecewise geodesic in the Euclidean metric on $\bdy C$, with
the following properties:
\begin{enumerate}
\item 
The loop $\gamma$ is homotopic in $C$ to a loop in $f(S)$ about $p$.
\item 
The vertices of $\gamma$ lie in $\bdy C \cap f(\tau)$, and correspond
to the endpoints of edges of $\tau$ at puncture $p$.
\item 
$\ell(\gamma) = \ell(\bdy R_i) $, where $R_i \subset S$ is one of the
cusp neighborhoods of Lemma~\ref{lemma:pleated-cusp}.
\end{enumerate}
\end{lemma}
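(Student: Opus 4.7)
The plan is to set $\gamma := f(\bdy R_i)$, the direct image of the cusp boundary constructed in Lemma~\ref{lemma:pleated-cusp}. The substance of the proof is the assertion that this image actually sits on $\bdy C$ as a piecewise Euclidean-geodesic closed curve.

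The main technical observation is that, because every edge of $\tau$ incident to $p$ maps to a geodesic running into the same cusp $C$, the pleating map sends horocycles around $p$ in $S$ to piecewise-Euclidean-geodesic curves lying on horospheres about the cusp of $C$. Concretely, I would lift to the universal cover so that $C$ becomes the horoball $H = \{z > z_H\}$ about $\infty$ in the upper half-space model, and lift $p$ also to $\infty$. For each ideal triangle $T$ of $\tau$ with a corner at $p$, the pleating map $f$ restricts to an isometric embedding of $T$ onto an ideal triangle $T'$ lying in a $2$--dimensional totally geodesic plane of $\HH^3$ that contains $\infty$; such a plane is a vertical Euclidean half-plane in the upper half-space model. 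Inside this half-plane, a horocyclic arc around $p$ in $T$ at a fixed $\HH^2$-height maps to a horizontal Euclidean segment at some $\HH^3$-height on the horosphere about $\infty$. The requirement that two adjacent pleating pieces agree isometrically on their shared vertical edge forces the scaling factor $\alpha := (\HH^3\text{-height})/(\HH^2\text{-height})$ to be constant across all such $T$, so horocycles in $S$ at a common $\HH^2$-height map to horospherical arcs at a common $\HH^3$-height.

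Given this, I would take $R_i$ to be the cusp neighborhood from Lemma~\ref{lemma:pleated-cusp}, whose horocyclic boundary is chosen so that $f(\bdy R_i)$ lies on $\bdy H$ and hence descends to $\bdy C$ in the quotient. Then $\gamma := f(\bdy R_i)$ is a closed immersed piecewise Euclidean-geodesic curve on $\bdy C$, with vertices occurring at the images of the intersections of $\bdy R_i$ with edges of $\tau$ at $p$.

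The three properties then follow readily: property~(1) is automatic, since $\gamma$ is itself a loop in $f(S)$ about $p$; property~(2) follows from the construction, as there is one vertex per edge-endpoint of $\tau$ at $p$, each lying in $\bdy C \cap f(\tau)$; and property~(3) follows because $f$ is isometric on each horocyclic arc of $\bdy R_i$ within an ideal triangle, giving $\ell(\gamma) = \ell(\bdy R_i)$. The main obstacle is the common-scaling-factor claim; this reduces to the observation that any orientation-preserving isometry of $\HH^2$ fixing $\infty$ has the form $z \mapsto \alpha z + \beta$ with $\alpha \in \RR_{>0}$ and $\beta \in \RR$, and two such isometries that agree on a vertical line must have equal $\alpha$.
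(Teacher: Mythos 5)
Your construction takes $\gamma := f(\bdy R_i)$ and rests on the claim that this image lies on $\bdy C$ as a piecewise Euclidean-geodesic curve. That claim is false in general, and it is precisely the subtlety this lemma is designed to get around. Your common-scaling argument is correct only on the \emph{tips} of the triangles of $\tau$ incident to $p$: there each triangle maps into a vertical half-plane through the parabolic fixed point, and intrinsic horocycles about $p$ go to horizontal segments at a matching height. But $R_i$ in Lemma~\ref{lemma:pleated-cusp} is the closed $d$--neighborhood of the small standard neighborhood $R_i^0$, where $d$ is the distance from $\bdy C_0$ to $\bdy C$; its boundary horocycle typically leaves the tip region and crosses edges of $\tau$ that do \emph{not} end at $p$'s cusp, entering triangles whose totally geodesic images are not vertical half-planes through $\infty$. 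On those portions $f(\bdy R_i)$ is neither at constant Euclidean height nor on $\bdy C$ at all: Lemma~\ref{lemma:pleated-cusp} only gives $f(R_i) \subset C$ via the $1$--Lipschitz property, and the containment can be strict, exactly because $f(S)$ is bent along geodesics passing through $C \setminus C_0$ (the middle of Figure~\ref{fig:pleated}). There is also no freedom to ``choose'' $\bdy R_i$ so that its image lands on $\bdy C$; the neighborhood is determined by the equidistant construction. As stated, your $\gamma$ would fail to be a curve on $\bdy C$, would be bent at crossings with edges not incident to $p$ (violating property (2)), and property (1) would not be ``automatic'' in the way you assert.

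The paper's proof sidesteps this by working where the intersection \emph{is} standard: it takes $\gamma_0 = f(\bdy R_i^0) \subset \bdy C_0$, which is piecewise geodesic with bending points exactly at $\bdy C_0 \cap f(\tau)$, and then defines $\gamma$ to be the vertical (closest-point) projection of $\gamma_0$ onto $\bdy C$ --- explicitly conceding that $\gamma$ need not lie in the pleated surface. The length identity is then not a direct isometry statement but the chain $\ell(\gamma) = e^d \ell(\gamma_0) = e^d \area(R_i^0) = \area(R_i) = \ell(\bdy R_i)$, using the exponential growth of horocycle length under projection together with the area computation of Lemma~\ref{lemma:pleated-cusp}. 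If you want to salvage your argument, you would have to restrict your scaling observation to the deep horocycle $\bdy R_i^0$ (where it is valid) and then add the projection step --- at which point you have reproduced the paper's proof.
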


A lift of $\gamma$ to a horoball $H$ covering $C$ is shown, dotted, in
Figure~\ref{fig:pleated}.

\begin{proof}[Proof of Lemma~\ref{lemma:projection-curve}]
We may construct $\gamma$ as follows. Recall, from the proof of Lemma
\ref{lemma:pleated-cusp}, that there is a horocusp $C_0 \subset C$
such that the intersection $f(S) \cap C_0$ is standard, consisting of
tips of ideal triangles. Then $f^{-1}(C_0)$ is a disjoint union of
horospherical cusp neighborhoods. Let $R_i^0$ be the component of
$f^{-1}(C_0)$ that contains puncture $p$, and let $\gamma_0 = f(\bdy
R_i^0) \subset \bdy C_0$.

Note that the curve $\gamma_0$ is piecewise geodesic in the Euclidean
metric on $\bdy C_0$, and that it is bent precisely at the
intersection points $\bdy C_0 \cap f(\tau)$, where the triangulation
$\tau$ enters the cusp. See Figure~\ref{fig:pleated}.

We define $\gamma$ to be the projection of $\gamma_0$ to the
horospherical torus $\bdy C$. Note that if $C_0$ and $C$ are lifted to
horoballs about $\infty$ in $\HH^3$, as in Figure~\ref{fig:pleated},
this projection is just vertical projection in the upper half-space
model.

Observe that while $\gamma_0 \subset f(S)$, its projection $\gamma$
might not be contained in the pleated surface. Nevertheless, $\gamma$
is completely defined by $\gamma_0$. The vertices where $\gamma$ is
bent are contained in $f(\tau)$.

Let $d$ be the distance between $\bdy C_0$ and $\bdy C$. Then, as in
Lemma~\ref{lemma:pleated-cusp}, lengths grow by a factor of $e^d$ as
we pass from $\bdy C_0$ to $\bdy C$. Thus, by the same calculation as
in \eqref{eq:distance-d},
\[
\ell(\gamma) \: = \: e^d \cdot \ell(\gamma_0) \: = \: e^d \cdot
\area(R_i^0) \: = \: \area(R_i) \: = \: \ell(\bdy  R_i),
\]
where $R_i \supset R_i^0$ is the cusp neighborhood in $S$ that is
mapped into $C$, as in Lemma~\ref{lemma:pleated-cusp}.
\end{proof}

Our final goal in this section is to provide a universal lower bound
on the size of the cusp neighborhoods $R_i$. We do this using the
following result of Adams~\cite{adams:waist2}.

\begin{lemma}\label{lemma:waist}
Let $N$ be a non-elementary, orientable hyperbolic $3$--manifold, and
let $C$ be a maximal horocusp in $N$. (This neighborhood may
correspond to either a rank one or rank two cusp.) Let $s$ be a simple
closed curve on $\bdy C$, which forms part of the boundary of an
essential surface in $N$. Then $\ell(s) > 2^{1/4}$.
\end{lemma}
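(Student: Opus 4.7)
The plan is to work in the universal cover $\HH^3$ and exploit the fact that maximality of $C$ forces a specific horoball packing around a lift of $C$. The bound $2^{1/4}$ then comes from combining this packing constraint with the essentiality hypothesis on the surface bounded by $s$.

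First I would lift $C$ to a horoball $H_\infty$ in the upper half-space model, normalized so that $\bdy H_\infty$ is the Euclidean plane at height $1$. The curve $s$ lifts to a horizontal Euclidean line in $\bdy H_\infty$, and the generator of its cyclic stabilizer is a parabolic isometry $T$ whose Euclidean translation distance on $\bdy H_\infty$ equals $\ell(s)$. Since $C$ is a maximal horocusp, there is a full-sized horoball $H$ tangent to $H_\infty$; in our normalization, $H$ has Euclidean diameter exactly $1$. The orbit $\{T^n H\}_{n \in \ZZ}$ therefore consists of disjoint Euclidean unit balls tangent to $H_\infty$ along a line parallel to the translation direction of $T$. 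Disjointness of this orbit alone already gives the elementary bound $\ell(s) \geq 1$.

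To improve this to $\ell(s) > 2^{1/4}$, I would invoke the hypothesis that $s$ forms part of the boundary of an essential surface $F \subset N$. The plan is to replace $F$ by a least-area (or at any rate incompressible and $\bdy$-incompressible) representative, lift to $\HH^3$, and analyze a component $\widetilde{F}$ whose boundary on $\bdy H_\infty$ contains the axis of $T$. Because $F$ is essential, $\widetilde{F}$ cannot be confined to the ``slab'' between $H_\infty$ and the union of horoballs $\{T^n H\}$: it must separate $H_\infty$ from some additional horoball $H'$ of the packing that is tangent to $H_\infty$ at a point not in the $T$--orbit of the tangency of $H$. A planar computation on $\bdy H_\infty \homeo \RR^2$, comparing the Euclidean positions of $H'$ and its $T$--translates against those of $\{T^n H\}$, then shows that the tightest $T$--invariant configuration of tangent Euclidean unit disks with this extra disk present forces $\|T\| > 2^{1/4}$; the optimizer is the symmetric packing in which $H'$ sits midway between consecutive $T^n H$ and is tangent to both.

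The main obstacle is the step that produces the extra horoball from the essentiality of $F$: the packing argument for the crude bound $\ell(s) \geq 1$ is immediate, but extracting the extra factor requires genuine use of the topological hypothesis, namely showing that an essential surface cannot be pushed into an arbitrarily thin neighborhood of the region bounded by $H_\infty$ and the orbit $\{T^n H\}$. This is the heart of Adams' argument in \cite{adams:waist2}, and I would follow his disk-pattern (or equivalently, limit-set) analysis to locate $H'$ and then finish with the Euclidean extremal estimate described above.
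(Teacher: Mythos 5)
There is a genuine gap, and it sits exactly where you acknowledge the difficulty. Your setup and the crude bound $\ell(s) \geq 1$ are fine: maximality gives a full-sized horoball $H$ tangent to $H_\infty$, and disjointness of the $T$--orbit of $H$ gives translation length at least $1$. But the step that is supposed to upgrade this to $2^{1/4}$ does not hold together. First, the claim that an essential surface with boundary along the axis of $T$ forces an \emph{additional} horoball $H'$ tangent to $H_\infty$ (i.e.\ a second full-sized horoball outside the $T$--orbit of $H$) is unjustified and is false in general: a maximal cusp generically has a single orbit of self-tangencies, regardless of which essential surfaces the manifold contains, so no amount of pushing the surface around will manufacture a second tangency point. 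Second, even granting such an $H'$, the extremal computation you describe is not carried out, and the configuration you name as the optimizer (a unit horoball tangent to consecutive $T^nH$, sitting ``midway'') has a free parameter transverse to the translation direction, so by itself it imposes no bound beyond $\ell(s)\geq 1$; it is not clear how $2^{1/4}$ would emerge from it.

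The deeper problem is that you defer the missing step to Adams, but Adams' theorem (the one the paper invokes) does not use essential surfaces at all. He proves, by a horoball-packing analysis of the maximal cusp, that \emph{every} parabolic translation has length greater than $2^{1/4}$ with exactly three exceptional parabolics, one each in the census manifolds {\tt m004}, {\tt m009}, and {\tt m015}. These exceptions are real (for instance the meridian of the figure-eight knot complement has length exactly $1$ on the maximal cusp), so no purely geometric packing argument can give $2^{1/4}$ unconditionally; the essential-surface hypothesis is needed precisely and only to exclude the three exceptional slopes. The paper does this topologically, not geometrically: those manifolds are punctured-torus bundles or two-bridge knot complements, whose boundary slopes of incompressible surfaces are classified (Floyd--Hatcher, Hatcher--Thurston), and the short slopes are not among them. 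Your proposal contains no mechanism that could distinguish, inside the horoball picture, a short slope that bounds an essential surface from one that does not --- that distinction is carried by the boundary-slope classification, which is the ingredient your argument is missing.
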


\begin{proof}
This is a consequence of a theorem of Adams
\cite[Theorem~3.3]{adams:waist2}. He proved that every parabolic
translation of the maximal cusp of any non-elementary hyperbolic
$3$--manifold has length greater than $2^{1/4}$, with exactly three
exceptions: one parabolic each in the three SnapPea census manifolds
{\tt m004}, {\tt m009}, and {\tt m015}. 

Each of the manifolds {\tt m004}, {\tt m009}, and {\tt m015} is either
a punctured torus bundle or a two-bridge knot complement. Hence the
boundary slopes of incompressible surfaces in these manifolds are
classified~\cite{floyd-hatcher, hatcher-thurston:2bridge}. In
particular, none of the three slopes shorter than $2^{1/4}$ bounds an
essential surface.
\end{proof}

As a result, we obtain

\begin{lemma}\label{lemma:pleated-cusp-size}
Let $N$ be a cusped orientable hyperbolic $3$--manifold, with a
maximal cusp $C$. Let $f(S) \subset N$ be a pleated surface, homotopic
to a properly embedded essential surface, such that $n$ punctures of
$S$ are mapped to $C$.  Then $f^{-1}(C) \subset S$ contains $n$
disjoint horospherical cusp neighborhoods $R_1, \ldots, R_n$, such
that
\[
\ell(\bdy R_i) \: = \: \area(R_i) \: > \: 2^{1/4} 
   \quad \mbox{for each $i$.}
\]
\end{lemma}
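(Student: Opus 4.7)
The plan is to deduce this statement by combining the two previous lemmas. Lemma~\ref{lemma:pleated-cusp} already produces the disjoint cusp neighborhoods $R_1, \ldots, R_n$ in $f^{-1}(C)$ and shows that $\ell(\bdy R_i) = \area(R_i) \geq \lambda$, where $\lambda$ is the length on $\bdy C$ of a geodesic representing a loop about a puncture of $S$ mapped into $C$. So the only new content is a uniform lower bound on this longitude length $\lambda$.

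To obtain that bound, I would argue that $\lambda$ is the length of a boundary slope of an essential surface in $N$, and then invoke Lemma~\ref{lemma:waist}. Concretely, the hypothesis gives a properly embedded essential surface $S' \subset N$ properly homotopic to $f \from S \to N$. The curve in $\bdy C$ of length $\lambda$ is, by construction, freely homotopic in $C$ to a small loop around the puncture $p$ of $S$, and hence to a boundary component of $S'$ on $\bdy C$. This is a simple closed curve on $\bdy C$ bounding (part of) the essential surface $S'$, so Lemma~\ref{lemma:waist} applies to give $\lambda > 2^{1/4}$.

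Combining, for every $i$,
\[
\ell(\bdy R_i) \: = \: \area(R_i) \: \geq \: \lambda \: > \: 2^{1/4},
\]
which is the claimed inequality.

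The only subtlety I anticipate is verifying that the loop of length $\lambda$ on $\bdy C$ really is a boundary slope of an essential surface (so that Lemma~\ref{lemma:waist} applies, and in particular we are not in one of the three excluded SnapPea census manifolds with a slope shorter than $2^{1/4}$). This is handled by choosing a representative $S'$ in the proper homotopy class of $f$: since $S$ is essential, a peripheral annulus about $p$ in $S'$ meets $\bdy C$ in a curve freely homotopic, in $C$, to the longitude of length $\lambda$, so this longitude is indeed a boundary slope of an essential surface. After that observation, the proof is immediate from Lemmas~\ref{lemma:pleated-cusp} and~\ref{lemma:waist}.
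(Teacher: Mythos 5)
Your proof is correct and follows the same route as the paper, which simply observes that the statement is immediate from Lemmas~\ref{lemma:pleated-cusp} and~\ref{lemma:waist}; your extra verification that the longitude is genuinely a boundary slope of an essential surface (so the exceptional census manifolds in Lemma~\ref{lemma:waist} are excluded) is precisely the point the paper leaves implicit.
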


\begin{proof}
This is immediate from Lemmas~\ref{lemma:pleated-cusp} and
\ref{lemma:waist}.
\end{proof}


\section{Hyperbolic surfaces with one cone point}
\label{sec:cone-surface}

Recall from Definition~\ref{def:pleated} that every pleated surface
carries an intrinsic hyperbolic metric. In this section, we prove
several lemmas about the geometry of cusp neighborhoods and geodesic
arcs in these surfaces. These estimates are used throughout the proofs
of Theorems~\ref{thm:main} and~\ref{thm:main-qf}.

In fact, we work in a slightly more general setting: namely,
hyperbolic surfaces with a cone point, whose cone angle is at least
$2\pi$.  These singular surfaces arise in sweepouts of a hyperbolic
$3$--manifold: see Section~\ref{sec:sweepouts}. Therefore, we derive
length and area estimates for these singular surfaces, as well as
non-singular ones.

\begin{define}
\label{def:cone-surface}
A \emph{hyperbolic cone surface} is a complete metric space $S$,
homeomorphic to a surface of finite type. We require that $S$ admits a
triangulation into finitely many simplices, such that each simplex is
isometric to a totally geodesic triangle in $\HH^2$. The triangles are
allowed to have any combination of ideal vertices (which correspond to
punctures of $S$) and material vertices (which correspond to points in
$S$). The triangles are glued by isometries along their edges.

Every point of $S$ that is not a material vertex of the triangulation
thus has a neighborhood isometric to a disk in $\HH^2$. Every material
vertex $v \in S$ has a neighborhood where where the metric (in polar
coordinates) takes the form
\begin{equation}\label{eq:cone}
ds^2 = dr^2 + \sinh^2(r) \,  d\theta^2,
\end{equation}
where $0 \leq r < r_v$ and $0 \leq \theta \leq \theta_v$. Here
$\theta_v$ is called the \emph{cone angle} at $v$, and can be
computed as the sum of the interior angles at $v$ over all the
triangles that meet $v$. Note that if $\theta_v=2\pi$, equation
(\ref{eq:cone}) becomes the standard polar equation for the hyperbolic
metric in a disk. The vertices of $S$ whose cone angles are \emph{not}
equal to $2\pi$ are called the \emph{cone points} or \emph{singular
points} of $S$; all remaining points are called \emph{non-singular}.
\end{define}

If all singular points of $S$ have cone angles $\theta_v > 2\pi$,
another common name for $S$ is a \emph{simplicial hyperbolic
surface}. Simplicial hyperbolic surfaces have played an important role
in the study of geometrically infinite Kleinian groups
\cite{canary:covering, gabai:tameness-notes}.

Just as with non-singular hyperbolic surfaces, cone surfaces have a
natural geometric notion of a cusp neighborhood.

\begin{define}\label{def:equidistant-cusp}
Let $S$ be a hyperbolic cone surface, with one or more punctures, and
let $R \subset S$ be a closed set. Then $R$ is called an
\emph{equidistant cusp neighborhood} of a puncture of $S$ if the
following conditions are satisfied:
\begin{enumerate}
\item 
The interior of $R$ is homeomorphic to $S^1 \times (0, \infty)$.
\item 
There is a closed subset $Q \subset R$, whose universal cover
$\widetilde{Q}$ is isometric to a horoball in $\HH^2$. This implies
that the interior of $Q$ is disjoint from all cone points.
\item 
There is a distance $d>0$, such that $R$ is the closed
$d$--neighborhood of $Q$.
\end{enumerate}
$R$ is called a \emph{maximal cusp} if it is not a proper subset of
any larger equidistant cusp neighborhood. Equivalently, $R$ is maximal
if and only if it is not homeomorphic to $S^1 \times [0, \infty)$.
\end{define}

\begin{lemma}
\label{lemma:embeddedness}
Let $S$ be a hyperbolic surface with one cone point $v$, of angle
$\theta_v \geq 2 \pi$. Let $R \subset S$ be a non-maximal equidistant
neighborhood of a puncture of $S$. Then
\begin{enumerate}
\item
\label{item:geodesic} There is a geodesic $\alpha$ that is
shortest among all essential paths from $R$ to $R$.
\item
\label{item:eyeglass} The arc $\alpha$ is either embedded, or is
the union of a segment and a loop based at $v$. In the latter case,
there is an arbitrarily small homotopy in $S$ making $\alpha$
embedded.
\item
\label{item:disjoint} If $2\pi \leq \theta_v < 4 \pi$, and
$\beta$ is another shortest arc from $R$ to $R$, there is an
arbitrarily small homotopy making $\alpha$ and $\beta$ disjoint.
\end{enumerate}
\end{lemma}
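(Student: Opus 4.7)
For part (1), since $R$ is non-maximal its boundary $\bdy R$ is a compact circle and $S$ has finite topological type. In each isotopy class of essential arcs with endpoints on $\bdy R$, I minimize length via Arz\'ela--Ascoli, then take the infimum over classes; a length-bounded minimizing sequence subconverges to a minimizer $\alpha$. Off $v$, local length-minimization makes $\alpha$ a smooth hyperbolic geodesic; at $v$ it forces both complementary angles of $\alpha$ to be $\geq \pi$, since otherwise a small push into the smaller wedge would strictly shorten $\alpha$. Thus $\alpha$ is a geodesic in the cone-surface sense.

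For part (2), I suppose $\alpha$ has a self-intersection at a point $x$ and consider the two cases. If $x$ is non-singular, write $\alpha = \alpha_1 * \alpha_2 * \alpha_3$ with $\alpha_2$ the sub-loop based at $x$, and look at the shortcut $\alpha' = \alpha_1 * \alpha_3$. Transversality of $\alpha$ at $x$ forces the corner of $\alpha'$ to have interior angle $< \pi$ on one side, so smoothing it strictly decreases length. If $\alpha_2$ is null-homotopic then $\alpha' \simeq \alpha$ and the smoothing directly contradicts minimality; if $\alpha_2$ is an essential loop but $\alpha'$ is still an essential arc, we contradict minimality over \emph{all} essential classes; and in the remaining case (both $\alpha_2$ essential as a loop and $\alpha'$ inessential as an arc), one replaces $\alpha_1, \alpha_3$ by shortest geodesic rays from $\bdy R$ to $x$, giving a shorter essential arc in the class of $\alpha$. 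Hence every self-intersection of $\alpha$ lies at $v$. If $\alpha$ visits $v$ at parameters $t_1 < t_2$, then $\alpha|_{[t_1, t_2]}$ is a geodesic loop at $v$ and $\alpha|_{[0,t_1]} \cup \alpha|_{[t_2,1]}$ is a segment from $\bdy R$ through $v$ back to $\bdy R$, giving the advertised ``segment plus loop'' description. A small homotopy pushes the loop off $v$ into one of its two adjacent sectors of angle $\geq \pi$, producing an embedded arc. More than two visits to $v$ are ruled out by iterating the same smoothing argument on consecutive loops.

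For part (3), I apply the surgery-and-smoothing idea to two distinct shortest arcs $\alpha, \beta$ of common length $L$. If $\alpha \cap \beta$ contains a transverse crossing at a non-singular $x$, swap halves at $x$ to form $\alpha', \beta'$; each has a corner of interior angle $<\pi$ at $x$, and smoothing strictly decreases the total length below $2L$, producing a shorter essential arc and contradicting minimality. Hence $\alpha \cap \beta \subseteq \{v\}$. If $\beta$ avoids $v$ it lies in a single angular sector of $\alpha$ near $v$, and a small push separates them. If both pass through $v$, I examine the four tangent directions on the circle of total angle $\theta_v$ at $v$: each arc splits this circle into two sectors of angle $\geq \pi$. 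Either the two directions of $\beta$ lie in a common sector of $\alpha$ (an arbitrarily small homotopy then pushes $\beta$ off $v$ into that sector), or they straddle $\alpha$; in the straddle case, swapping at $v$ creates a corner of angle $<\pi$ whose smoothing contradicts minimality exactly as before. The hypothesis $\theta_v < 4\pi$ enters in the angular arithmetic: the four ``$\geq \pi$'' side-sectors have total content at least $4\pi$, so when $\theta_v < 4\pi$ overlap is forced and one of the two alternatives always holds.

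The main obstacle is the cone-point case in part (3). Parts (1) and (2) reduce to routine compactness and geodesic surgery once the ``angle $\geq \pi$ at $v$'' bookkeeping is in place, but the dichotomy in part (3) between transverse-at-$v$ and parallel-at-$v$ — together with verifying that the push-off in the parallel case is an \emph{arbitrarily small} homotopy producing a genuinely \emph{embedded} arc — depends delicately on the bound $\theta_v < 4\pi$, and this is where the real technical work lies.
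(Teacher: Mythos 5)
Your part (1) is fine (direct minimization in place of the Hopf--Rinow argument in the universal cover), and your treatment of non-singular self-intersections is in the spirit of the actual proof, but the structural conclusion you draw in part (2) is both unproven and incorrect as stated. You assert that a non-embedded shortest arc is a loop at $v$ together with ``a segment from $\partial R$ through $v$ back to $\partial R$,'' i.e.\ two possibly distinct tails. In fact the two tails must \emph{coincide}, and this is the content of the step you skip: if the shortcut $\alpha|_{[0,t_1]} * \alpha|_{[t_2,1]}$ were essential it would be strictly shorter than $\alpha$, so it is inessential and the two tails are homotopic; each tail is shortest in its homotopy class (else replace it inside $\alpha$ and shorten); and since $v \notin R$, the neighborhood $R$ lifts to a horoball, which is convex, so the shortest path from $v$ to $R$ in a fixed class is \emph{unique} --- hence the tails are equal and $\alpha$ is an ``eyeglass'': one segment traversed twice plus a loop, with the segment the unique shortest geodesic from $v$ to $R$. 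Your third sub-case (replace the tails by shortest rays) also only gives ``no longer,'' not ``strictly shorter,'' without this exchange-and-smooth step. Without the eyeglass structure your ``small homotopy to an embedded arc'' is not justified for the two-tailed picture, and, more importantly, you lose the uniqueness fact that part (3) actually needs.

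The real gap is in part (3). From swap-and-smooth you conclude $\alpha \cap \beta \subseteq \{v\}$, but that is false: on a cone surface two distinct geodesics can overlap along an entire segment and branch only at the cone point, so $\alpha$ and $\beta$ may share a common tail from $\partial R$ to $v$. At such tangential overlap there is no corner of angle less than $\pi$, so your surgery gives no contradiction, and these shared-segment configurations --- two embedded arcs sharing a tail, an eyeglass together with an embedded arc, two eyeglasses --- are exactly the cases where the work lies; they are handled by the uniqueness of the shortest geodesic from $v$ to $R$ (which forces the shared tail) plus explicit small push-offs. Your four-direction analysis at $v$ assumes each arc passes through $v$ exactly once with four distinct tangent directions, so it covers neither eyeglasses nor coinciding directions. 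In addition, both in the non-singular swap and in the ``straddle'' swap at $v$ you assert the surgered arc is essential without argument; this needs justification (e.g.\ if both swapped arcs were inessential then $\beta \simeq \overline{\alpha}$, contrary to the arcs being distinct, or one argues via uniqueness of shortest representatives in a class), and in the $4$-valent case one must also show all four sub-arcs at $v$ have equal length and pairwise essential unions before the hypothesis $\theta_v < 4\pi$ can be invoked to find an angle less than $\pi$ and smooth it. As written, part (3) omits the configurations that the lemma is really about.
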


One way to interpret Lemma~\ref{lemma:embeddedness} is as follows.
Let $p$ be a puncture of $S$. Then any shortest arc relative to a cusp
neighborhood about $p$ gives a vertex of the arc complex $\arc(S,p)$.
If there are two distinct shortest arcs, they span an edge of
$\arc(S,p)$; more generally, if there are $n$ distinct shortest arcs,
they span an $(n-1)$--simplex.  This is used in
Sections~\ref{sec:lower} and~\ref{sec:lower-qf} to construct a path in
$\arc(S,p)$.

\begin{proof}[Proof of Lemma~\ref{lemma:embeddedness}]
Let $d$ be the infimal distance in the universal cover $\widetilde{S}$ between two
different lifts of $\bdy R$; since $R$ is not a maximal cusp,
$d>0$. Furthermore, since distance to the nearest translate is an
equivariant function on $\bdy \widetilde{R}$, it achieves a
minimum. Thus there are points $x,y$ on distinct lifts of $\bdy R$,
whose distance is exactly $d$. By the Hopf--Rinow theorem for cone
manifolds~\cite[Lemma 3.7]{chk:orbifold}, the distance between $x$ and
$y$ is realized by a geodesic path $\alpha$, and every
distance-realizing path is a geodesic. This
proves~\eqref{item:geodesic}.

\medskip

Suppose that $\alpha \from [0,d] \to S$ is a unit-speed
parametrization.  The image of $\alpha$ is a graph $\Gamma =
\Image(\alpha)$.  If $\alpha$ is not embedded, then $\Gamma$ has at
least one vertex of valence larger than $2$.  In this case, we will
show that $\Gamma$ is the union of a segment and a loop based at $v$.

Let $w$ be a vertex of $\Gamma$ that has valence larger than
$2$. Consider preimages $x, y \in \alpha^{-1}(w)$, where $x < y$. Then
$[0,d]$ splits into sub-intervals
\[
I_1 = [0,x], \quad I_2 = [x,y], \quad I_3 = [y,d].
\]
Let $\alpha_i$ be the restriction of $\alpha$ to the sub-interval $I_i$. 

We claim that $\alpha_1$ must be homotopic to $\overline{\alpha_3}$
(the reverse of $\alpha_3$): otherwise, cutting out the middle segment
$\alpha_2$ would produce a shorter essential path. We also claim that
$\ell(\alpha_1) = \ell(\alpha_3)$: for, if $\ell(\alpha_1) <
\ell(\alpha_3)$, we could homotope $\alpha_3$ to $\overline{\alpha_1}$
while shortening the length of $\alpha$. In particular, the last claim
implies that $x$ and $y$ are the only preimages of $w$, and $w$ has
valence $3$ or $4$.

If $w \neq v$, then it is a non-singular point of $S$, hence a
$4$--valent vertex. This means that $\alpha_1$ meets $\alpha_3$ at a
nonzero angle. Then, exchanging $\alpha_1$ and $\overline{\alpha_3}$
by homotopy and rounding off the corner at $w$, as in Figure
\ref{fig:round-off}, produces an essential arc shorter than
$\alpha$. This is a contradiction.

\begin{figure}[h]
\begin{overpic}{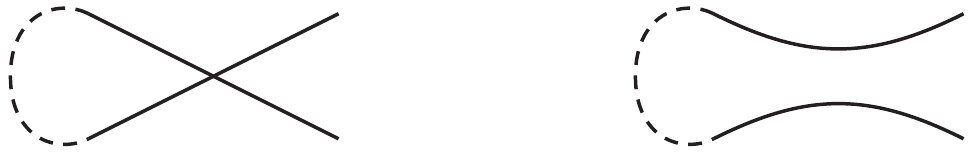}
\put(26,13.5){$\alpha_3$}
\put(12,13.5){$\alpha_2$}
\put(26,2){$\alpha_1$}
\put(48,7.5){$\Rightarrow$}
\end{overpic}
\caption{Exchanging and rounding off the arcs $\alpha_1$ and
$\alpha_3$ produces a shorter path. }
\label{fig:round-off}
\end{figure}

We may now assume that the only vertex in the interior of $\Gamma =
\Image(\alpha)$ occurs at the cone point $v$. This implies that $v$ is
not in the cusp neighborhood $R$, hence $R$ contains no singular
points and its universal cover $\widetilde{R}$ is isometric to a
horoball.  Thus, since horoballs are convex, there is a unique
shortest path from $v$ to $R$, in each homotopy class.

In particular, the homotopic arcs $\alpha_1$ and $\overline{\alpha_3}$
must coincide.  Thus $w=v$ is $3$--valent, and is the only vertex of
$\Gamma$. Hence $\alpha_2$ is an embedded loop based at $v$, and
$\alpha$ must be an ``eyeglass'' that follows $\alpha_1$ from $R$ to
$v$, runs around the loop $\alpha_2$, and returns to $R$ by retracing
$\alpha_1$. In this case, even though $\alpha$ is not embedded, one
component of the frontier of an $\varepsilon$--neighborhood of
$\Gamma$ is an embedded arc homotopic to $\alpha$. This proves
\eqref{item:eyeglass}.

For future reference, we note an important feature of eyeglass
geodesics. Suppose that $\alpha$ consists of an arc $\alpha_1$ from
$v$ to $R$ and a loop $\alpha_2$ based at $v$. Then $\alpha_1$ must be
the \emph{unique} shortest path from $v$ to $R$.  For if another
geodesic $\alpha_3$ from $v$ to $R$ has length $\ell(\alpha_3) \leq
\ell(\alpha_1)$, then $\alpha_1$ and $\alpha_3$ must be in different
homotopy classes. This means $\alpha_1 \cup \alpha_3$ is an essential
arc from $R$ to $R$, whose length is
\[
\ell(\alpha_1) + \ell(\alpha_3) \: \leq\:  2 \ell(\alpha_1) \: <\:
2\ell(\alpha_1) + \ell(\alpha_2) \: = \: \ell(a),
\]
contradicting the fact that $\alpha$ is shortest.

\medskip

For part \eqref{item:disjoint}, suppose that $\alpha$ and $\beta$ are
two distinct shortest arcs from $R$ to $R$. By statement
\eqref{item:eyeglass}, each of $\alpha$ and $\beta$ is either an
embedded arc or an eyeglass with a loop based at $v$. Suppose that
$\alpha$ and $\beta$ intersect, and let $\Gamma = \Image(\alpha) \cup
\Image(\beta)$.

If $\Gamma$ has a non-singular vertex $w$, then $w$ cuts $\alpha$ into
sub-arcs $\alpha_1, \alpha_2$ that run from $w$ to $R$. Similarly, $w$
cuts $\beta$ into sub-arcs $\beta_3, \beta_4$ from $w$ to $R$. Without
loss of generality, say that $\alpha_1$ is shortest among these four
arcs. Then at least one of $\alpha_1 \cup \beta_1$ or $\alpha_1 \cup
\beta_2$ is an essential arc from $R$ to $R$, and both of these arcs
are no longer than $\beta$. By rounding off the corner at $w$, we can
make $\alpha_1 \cup \beta_1$ or $\alpha_1 \cup \beta_2$ into an
essential arc shorter than $\beta$. This is a contradiction.

For the rest of the proof, we assume that the only vertex of $\Gamma$
is at $v$. One consequence of this assumption is that $R$ contains no
singular points.  Hence, as in the proof of \eqref{item:eyeglass},
there is a unique shortest path from $v$ to $R$ in each homotopy
class.  There are three cases: $(i)$ neither $\alpha$ nor $\beta$ is
an eyeglass, $(ii)$, $\alpha$ is an eyeglass but $\beta$ is not, and
$(iii)$ both $\alpha$ and $\beta$ are eyeglasses.

If $\alpha$ and $\beta$ are embedded arcs that intersect at $v$,
consider the valence of $v$, which must be $3$ or $4$.  If $v$ is
$3$--valent, $\alpha$ and $\beta$ must share the same path $\alpha_1 =
\beta_1$ from $R$ to $v$, then diverge. In this case, the
$\varepsilon$--neighborhood of $\Gamma$ contains disjointly embedded
arcs homotopic to $\alpha$ and $\beta$.

If $v$ is $4$--valent, let $\gamma_1, \ldots, \gamma_4$ be the four
geodesic sub-arcs of $\Gamma$ from $v$ to $R$. Since each $\gamma_i$
is the unique shortest path in its homotopy class, any combination
$\gamma_i \cup \gamma_j$ is an essential arc. Since $\alpha = \gamma_1
\cup \gamma_2$ and $\beta = \gamma_3 \cup \gamma_4$ are both shortest
arcs in $S$, every $\gamma_i$ must have the same length. But since $v$
has cone angle $\theta_v < 4\pi$, there must be two sub-arcs
$\gamma_i, \gamma_j$ that meet at an angle less than $\pi$. Thus
$\gamma_i \cup \gamma_j$ can be shortened by smoothing the corner at
$v$, contradicting the assumption that $\alpha$ and $\beta$ are
shortest.

\begin{figure}
\begin{overpic}{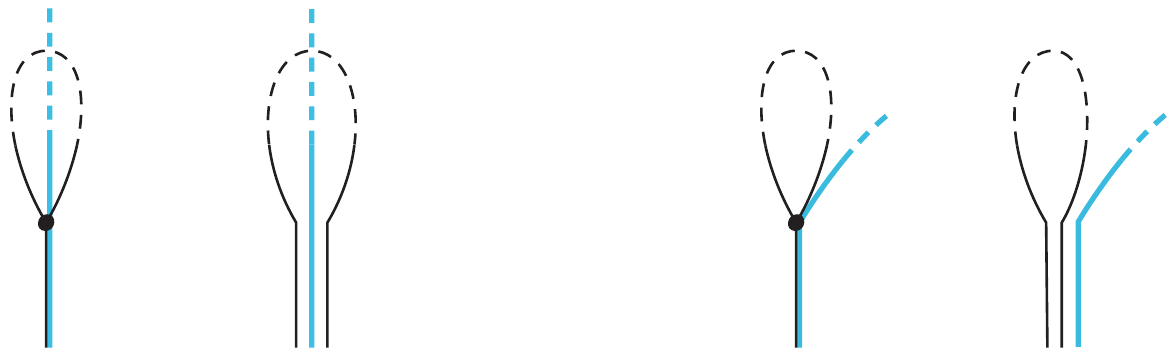}
\put(-2.5,10){$\geq \! \pi$}
\put(6,10){$\geq \! \pi$}
\put(62,10){$\geq \! \pi$}
\put(70,10){$\geq \! \pi$}
\put(-0.5,25){$\alpha$}
\put(2,29){$\beta$}
\put(22,25){$\alpha$}
\put(24.5,29){$\beta$}
\put(71,25){$\alpha$}
\put(74,21){$\beta$}
\put(92.5,25){$\alpha$}
\put(97,21){$\beta$}
\put(14,16){$\Rightarrow$}
\put(79,16){$\Rightarrow$}
\end{overpic}
\caption{An eyeglass path and an embedded arc can be made disjointly
embedded after a short homotopy. The dashed sections of $\alpha$ and
$\beta$ are schematics meant to indicate that the arcs are traveling
through a distant part of the surface, while staying disjoint.}
\label{fig:eyeglass+path}
\end{figure}

If $\alpha$ is an eyeglass, but $\beta$ is not, let $\alpha_1$ be the
sub-arc of $\alpha$ from $v$ to $R$. By the observation at the end of
part \eqref{item:eyeglass}, $\alpha_1$ is the unique shortest geodesic
from $v$ to $R$. Let $\beta_1$, $\beta_2$ be the sub-arcs of $\beta$
from $v$ to $R$, where $\ell(\beta_1) \leq \ell(\beta_2)$. If
$\alpha_1$ is distinct from $\beta_1$, then $\alpha_1 \cup \beta_1$
would be an essential path that is shorter than $\beta$ ---
contradiction. Thus $\alpha_1 = \beta_1$. In this case, homotoping
$\alpha$ to an embedded arc makes it disjoint from $\beta$. See Figure
\ref{fig:eyeglass+path}.

\begin{figure}
\begin{overpic}{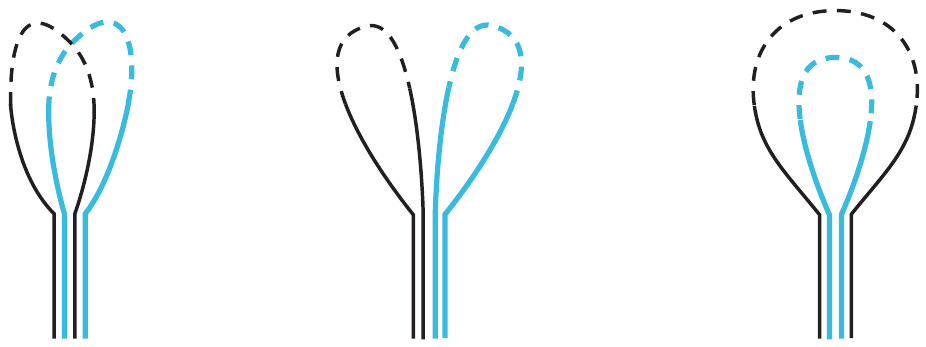}
\put(-2.5,27){$\alpha$}
\put(15,27){$\beta$}
\put(33,27){$\alpha$}
\put(57,27){$\beta$}
\put(78,27){$\alpha$}
\put(89,27){$\beta$}
\put(-2,12){$\geq \! \pi$}
\put(11,12){$\geq \! \pi$}
\put(49.5,12){$\geq \! \pi$}
\put(37,12){$\geq \! \pi$}
\put(49.5,12){$\geq \! \pi$}
\put(81,12){$\geq \! \pi$}
\put(93.5,12){$\geq \! \pi$}
\end{overpic}
\caption{Two eyeglass paths that share the same stem can be made
disjointly embedded after a short homotopy. Shown are the embedded
versions of $\alpha$ and $\beta$, in the three possible interleaving
configurations at vertex $v$.}
\label{fig:eyeglass+eyeglass}
\end{figure}

Finally, if each of $\alpha$ and $\beta$ is an eyeglass, the
observation at the end of part \eqref{item:eyeglass} implies that each
of $\alpha$ and $\beta$ must contain the unique shortest geodesic from
$v$ to $R$. Thus each of $\alpha$ and $\beta$ consists of the same
geodesic arc $\gamma_1$ from $v$ to $R$, as well as a loop based at
$v$. Figure~\ref{fig:eyeglass+eyeglass} shows that the
$\varepsilon$--neighborhood of $\Gamma = \Image(\alpha) \cup
\Image(\beta)$ contains disjointly embedded paths representing
$\alpha$ and $\beta$.
\end{proof}

In the case where $S$ is a non-singular surface, we have a stronger
version of Lemma~\ref{lemma:embeddedness}: not only are shortest arcs
disjoint, but nearly-shortest arcs must be disjoint as well.

\begin{lemma}
\label{lemma:shortish-disjoint}
Let $S$ be a punctured hyperbolic surface, and let $R$ be a
horospherical neighborhood about one puncture. Let $\alpha$ and
$\beta$ be distinct geodesic arcs from $R$ to $R$.  If $\alpha$ and
$\beta$ intersect, then there is a third geodesic arc $\gamma$,
satisfying
\[
\ell(\gamma) \: \leq \: \max \{ \ell(\alpha), \, \ell(\beta) \} 
    - \ln(2).
\]
Here, all lengths are measured relative to the cusp neighborhood $R$.
\end{lemma}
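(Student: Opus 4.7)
The plan is to lift to the universal cover $\HH^2$ and invoke the Ptolemy identity for lambda-lengths of horoballs. Let $\tilde w$ be a lift of the intersection point, and let $\tilde\alpha, \tilde\beta$ be the lifts of $\alpha, \beta$ through $\tilde w$. Each is a geodesic segment joining two lifts of the horocusp $R$: say $\tilde\alpha$ runs from horoball $H_1$ to horoball $H_2$, while $\tilde\beta$ runs from $H_3$ to $H_4$. Lengths measured from $\bdy R$ translate to distances between horoball boundaries: $\ell(\alpha) = d(H_1, H_2)$ and $\ell(\beta) = d(H_3, H_4)$.

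Next I would verify that the four horoballs $H_1, H_2, H_3, H_4$ are all distinct. Since $\alpha$ and $\beta$ minimize length in their homotopy classes rel $\bdy R$, each of their lifts meets every terminal horoball perpendicularly. If two such lifts shared a terminal horoball $H$, then placing $H$ at infinity in the upper half-plane model would make both lifts vertical lines, hence disjoint, contradicting that $\tilde\alpha$ and $\tilde\beta$ cross at $\tilde w$. Because the crossing is transverse, the four horoballs appear in the cyclic order $H_1, H_3, H_2, H_4$ on $\bdy \HH^2$, so that $\tilde\alpha, \tilde\beta$ are the diagonals of the ideal quadrilateral they span.

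Writing $\lambda_{ij} = \exp(d(H_i, H_j)/2)$ and applying the Ptolemy identity for this decorated ideal quadrilateral yields
\[
\lambda_{12}\,\lambda_{34} \: = \: \lambda_{13}\,\lambda_{24} + \lambda_{14}\,\lambda_{23},
\]
which after substitution becomes
\[
e^{(\ell(\alpha) + \ell(\beta))/2} \: = \: e^{(d(H_1,H_3) + d(H_2,H_4))/2} + e^{(d(H_1,H_4) + d(H_2,H_3))/2}.
\]
At least one summand on the right is at most half the left side, so one of the two distance sums is at most $\ell(\alpha) + \ell(\beta) - 2\ln 2$. Consequently there is a pair $\{i, j\}$, with $\{i,j\} \neq \{1,2\}$ and $\{i,j\} \neq \{3,4\}$, satisfying
\[
d(H_i, H_j) \: \leq \: \tfrac{1}{2}\bigl(\ell(\alpha) + \ell(\beta)\bigr) - \ln 2 \: \leq \: \max\{\ell(\alpha), \ell(\beta)\} - \ln 2.
\]
The common perpendicular between $H_i$ and $H_j$ projects to an essential geodesic arc $\gamma$ in $S$ (essential since $H_i \neq H_j$), distinct from both $\alpha$ and $\beta$ because its endpoints in $\HH^2$ differ.

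The main obstacle is identifying the correct elementary tool: once Ptolemy is invoked, the algebra forces the bound with the precise constant $\ln 2$. The most delicate geometric step is the second one, where the perpendicularity of $\alpha$ and $\beta$ to $\bdy R$ is essential for concluding that all four horoballs are distinct; without it, shared-horoball cases would degenerate the quadrilateral and the Ptolemy term $\lambda_{ij}\lambda_{kl}$ governing $\gamma$ would collapse.
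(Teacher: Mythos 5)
Your proof is correct, and it takes a genuinely different route from the paper's. The paper works intrinsically in $S$: after shrinking $R$ so that the crossings lie outside $R$, it selects the shortest of the four end-segments into which the crossings cut $\alpha$ and $\beta$, concatenates it with the piece of the other arc meeting it at angle at most $\pi/2$, and straightens, using an exact length formula for $2/3$-ideal triangles to see that straightening at an angle $\theta \leq \pi/2$ saves at least $\ln 2$. You instead lift to $\HH^2$ and apply the Ptolemy relation for lambda-lengths to the decorated ideal quadrilateral whose diagonals are the two crossing lifts; since the two opposite-side products sum to $e^{(\ell(\alpha)+\ell(\beta))/2}$, one of them is at most half of it, producing an arc of (truncated) length at most $\tfrac{1}{2}(\ell(\alpha)+\ell(\beta)) - \ln 2$. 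This is in fact slightly stronger than the stated bound (average rather than maximum), and it avoids the paper's normalization of $R$ and its case analysis; the cost is invoking the Ptolemy identity, and the fact that your $\gamma$, which joins an end of $\alpha$ to an end of $\beta$, need not be simple, whereas the paper's $\gamma$ is the geodesic of an embedded concatenation --- harmless here, since every application of the lemma needs only some essential arc beating the maximum by $\ln 2$. Two small repairs: distinctness of the four horoballs needs no perpendicularity --- a shared terminal horoball means a shared ideal endpoint, so the two lifts are disjoint or equal, the first contradicting the transverse crossing and the second contradicting $\alpha \neq \beta$; and ``its endpoints in $\HH^2$ differ'' does not by itself show that the projection of $\gamma$ differs from $\alpha$ or $\beta$ in $S$, since different lifts can project to the same arc --- the honest statement is that the length bound forces $\gamma$ to differ from the longer of the two, which is all that is ever used. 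Finally, your identification $\ell(\alpha) = d(H_1,H_2)$ measures length truncated at the two end horoballs; this is the same normalization the paper implicitly uses when it asserts that resizing $R$ changes all lengths by a common additive constant.
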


In practice, we use the contrapositive statement: if both $\alpha$ and
$\beta$ are at most $\ln (2)$ longer than the shortest geodesic from
$R$ to $R$, then they must be disjoint.

\begin{proof}[Proof of Lemma~\ref{lemma:shortish-disjoint}]
If we change the size of the cusp neighborhood $R$, then all geodesic
arcs from $R$ to $R$ have their lengths changed by the same additive
constant. Thus, without loss of generality, we may assume that $R$ is
small enough so that all intersections between $\alpha$ and $\beta$
happen outside $R$.  As $S$ has no cone points, $\alpha$ and $\beta$
meet transversely.

Orient both $\alpha$ and $\beta$. If we cut the geodesic $\alpha$
along its intersection points with $\beta$, we obtain a collection of
segments. Let $\alpha_1$ and $\alpha_2$ be the first and last such
segments, respectively, along an orientation of $\alpha$.  That is,
$\alpha_1$ (respectively $\alpha_2$) is the sub-arc of $\alpha$ from
$\bdy R$ to the first (last) point of intersection with $\beta$.
Similarly, let $\beta_1$ and $\beta_2$ be the first and last segments
of $\beta$, along an orientation of $\beta$.

Assume, without loss of generality, that $\alpha_1$ is shortest among
the segments $\alpha_1, \alpha_2, \beta_1, \beta_2$.  Set $v =
\alpha_1 \cap \beta$.  Then the vertex $v$ cuts $\beta$ into sub-arcs
$\beta_3$ and $\beta_4$, such that $\beta_1 \subset \beta_3$ and
$\beta_2 \subset \beta_4$.  Without loss of generality, we may also
assume that $\beta_3$ (rather than $\beta_4$) is the sub-arc of
$\beta$ that meets $\alpha_1$ at an angle of at most $\pi/2$.

Note that $\alpha_1 \cup \beta_3$ is an embedded arc, because (by
construction) $\alpha_1$ only intersects $\beta$ at the vertex
$v$. Furthermore, $\alpha_1 \cup \beta_3$ is topologically essential
(otherwise, one could homotope $\beta$ to reduce its length).  The
hypothesis that $\alpha_1$ is shortest among $\alpha_1, \alpha_2,
\beta_1, \beta_2$ implies that
\[
\ell(\alpha_1 \cup \beta_3) \: \leq \:  \ell(\beta_2 \cup \beta_3) \:
\leq \:  \ell(\beta_4 \cup \beta_3) \: = \: \ell(\beta).
\]

Let $\gamma$ denote the geodesic from $R$ to $R$ in the homotopy class
of $\alpha_1 \cup \beta_3$.  Then the geodesic extensions of
$\alpha_1$, $\beta_3$, and $\gamma$ form a $2/3$ ideal triangle, with
angle $\theta \leq \pi/2$ at the material vertex $v$. Then,
\cite[Lemma A.3]{cfp:tunnels} gives
\[
\ell(\gamma) \: = \: \ell(\alpha_1 \cup \beta_3) +  \ln \left(
\tfrac{1 - \cos \theta}{2}    \right) \: \leq \: \ell(\beta) + \ln
\left( \tfrac{1}{2} \right),
\]
as desired.
\end{proof}

Next, we consider the area of equidistant cusp neighborhoods in $S$.

\begin{lemma}
\label{lemma:expansion-rate}
Let $S$ be a simplicial hyperbolic surface with at most one singular
point. Let $R_1$ and $R_2$ be embedded equidistant neighborhoods of
the same puncture of $S$, such that $R_1 \subset R_2 \subset S$, and
$d$ is the distance between $\bdy R_1$ and $\bdy R_2$. Then
\[
\area(R_2) \geq e^d \, \area(R_1).
\]
\end{lemma}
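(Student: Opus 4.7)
The plan is to establish the differential inequality $f'(r) \geq f(r)$ for almost every $r \in [d_1, d_2]$, where $f(r) = \area(N_r(Q))$ and $Q$ is a common horoball underlying both $R_i$; integrating $(\log f)' \geq 1$ across $[d_1, d_2]$ then yields $f(d_2) \geq e^d f(d_1)$, which is the lemma. To arrange a common $Q$, I would take the horoball subset associated to $R_1$ by Definition~\ref{def:equidistant-cusp}; since $R_2 \supset R_1 \supset Q$, the larger region $R_2$ is also a closed equidistant neighborhood of $Q$, giving $R_i = N_{d_i}(Q)$ with $d_2 - d_1 = d$. Writing $\sigma_r = \bdy N_r(Q)$ and applying the coarea formula to the $1$-Lipschitz distance function $\rho(x) = d(x,Q)$, I get that $f$ is absolutely continuous on $[0,d_2]$ with $f'(r) = \ell(\sigma_r)$ almost everywhere.

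The central step is to bound $f(r)$ above by $\ell(\sigma_r)$ using Gauss--Bonnet. For almost every $r$ the curve $\sigma_r$ is a piecewise smooth simple closed curve, and $N_r(Q)$ is topologically an open annulus with one cusp end (so $\chi = 0$); since the smooth curvature is $-1$, Gauss--Bonnet gives
\[
\area(N_r(Q)) = \int_{\sigma_r} \kappa_g \, ds + \sum_{\text{corners}}(\pi - \alpha_i) + \sum_{v \in N_r(Q)}(2\pi - \theta_v).
\]
Three sign observations then finish the estimate. First, on smooth portions of $\sigma_r$ the geodesic curvature is exactly $\kappa_g = 1$, because locally the curve is equidistant from a single lift of $Q$ inside a hyperbolic region, hence is a horocyclic arc. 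Second, by the simplicial hyperbolic hypothesis the at most one cone point has $\theta_v \geq 2\pi$, so $2\pi - \theta_v \leq 0$. Third, at each corner of $\sigma_r$ the interior angle $\alpha_i$ of $N_r(Q)$ satisfies $\alpha_i \geq \pi$, so $\pi - \alpha_i \leq 0$. Combining these observations yields $\area(N_r(Q)) \leq \ell(\sigma_r) = f'(r)$, as required.

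The main obstacle is the corner-angle bound, which I would handle in the universal cover $\widetilde S$. A corner of $\sigma_r$ arises from a cut point of $\rho$ where two distinct shortest geodesics to $Q$ terminate; lifting to $\widetilde S$, the region $N_r(Q)$ pulls back locally to a union of convex horoball-neighborhoods $N_r(g \widetilde Q) \cup N_r(h \widetilde Q)$ of two translates of the lift $\widetilde Q$. At a transverse crossing of the smooth boundaries of these two convex sets, the four local sectors come in two pairs of equal opposite angles; exactly one sector (the one lying in neither convex set) is outside the union, while the other three lie inside it, so the interior angle of the union at the crossing is strictly greater than $\pi$. The same analysis handles cut points where more than two shortest geodesics terminate, since adding further convex sets only enlarges the covered portion of directions. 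With this local fact in hand, the Gauss--Bonnet estimate closes and the integration of $(\log f)' \geq 1$ over $[d_1,d_2]$ produces the claimed area bound.
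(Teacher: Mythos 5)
Your overall strategy (coarea formula to get $f'(r)=\ell(\sigma_r)$ a.e., Gauss--Bonnet to get $\area(N_r(Q))\le\ell(\sigma_r)$, then integrating $(\log f)'\ge 1$) is a genuinely different route from the paper, which instead describes the equidistant neighborhood explicitly as a horocusp glued to a hyperbolic cone along a slit and computes $\area(R(x))=e^x\area(Q)+2(\theta_v-2\pi)\sinh^2((x-x_v)/2)$. However, there is a genuine gap in your first ``sign observation.'' The claim that every smooth portion of $\sigma_r$ is a horocyclic arc with $\kappa_g=1$ fails exactly in the case the lemma is about: once $r$ exceeds the distance from $Q$ to the cone point $v$ and $\theta_v>2\pi$, the cone point casts a shadow sector of angle up to $\theta_v-2\pi$ in which the shortest path to $Q$ passes through $v$; there $\sigma_r$ is an arc of the metric circle of radius $\rho=r-\mathrm{dist}(v,Q)$ centered at $v$, with $\kappa_g=\coth\rho>1$. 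Consequently $\int_{\sigma_r}\kappa_g\,ds$ can strictly exceed $\ell(\sigma_r)$, and since you simultaneously discard the cone term $2\pi-\theta_v$ as merely nonpositive, your Gauss--Bonnet computation does not establish $\area(N_r(Q))\le\ell(\sigma_r)$. In the nonsingular case, where your claim is valid, the lemma is just the standard exponential growth of horocusp area, so the missing case is the entire content.

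The estimate you want is nevertheless true, and your framework can be repaired by the bookkeeping you skipped: if the circular (shadow) arcs at radius $\rho$ subtend total angle $\omega\le\theta_v-2\pi$ at $v$, they contribute $\omega\cosh\rho$ to $\int\kappa_g\,ds$ and $\omega\sinh\rho$ to $\ell(\sigma_r)$, and the excess is absorbed by the cone defect, since $\omega\cosh\rho+(2\pi-\theta_v)\le\omega(\cosh\rho-1)\le\omega\sinh\rho$. One should also note that the transitions between horocyclic and circular arcs are tangential (both wavefronts meet the grazing geodesic through $v$ orthogonally), so they create no positive corner terms. Your cut-point corner analysis ($\alpha_i>\pi$ from the union of two locally convex regions) is fine as far as it goes, though in fact such cut points cannot occur on $\bdy N_r(Q)$ for $r\le d_2$ at all: two distinct shortest geodesics to $Q$ would yield an essential non-peripheral loop inside $R_2$, contradicting that the interior of an embedded equidistant neighborhood is an annulus. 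With these corrections your differential-inequality argument goes through; as written, though, the key inequality $\area(N_r(Q))\le\ell(\sigma_r)$ is unjustified precisely where the singular point matters, and that missing computation is in essence what the paper's proof carries out explicitly.
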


\begin{proof}
Let $Q \subset R_1$ be a cusp neighborhood isometric to the quotient
of a horoball. Then there is a number $ m>0$, such that for all $x \in
[0,m]$, the closed $x$--neighborhood of $Q$ is an equidistant cusp
neighborhood $R(x)$.  In particular, $R_1 = R(x_1)$ and $R_2 =
R(x_2)$, where $x_2 = x_1 + d$.  We shall explore the dependence of
$\area(R(x))$ on the parameter $x$.

Let $v$ be the singular point of $S$. (If $S$ is non-singular, let $v$
be an arbitrary point of $S \setminus Q$, and consider it a cone point
of cone angle $2\pi$.) Let $x_v$ be the distance from $v$ to $\bdy
Q$. Then, for $x < x_v$, $R(x)$ is a non-singular neighborhood of a
cusp, itself the quotient of a horoball. As mentioned in the proof
of Lemma~\ref{lemma:pleated-cusp}, the area of a horospherical cusp
grows exponentially with distance. In symbols,
\[
\area(R(x)) = e^x\, \area(Q) \quad \mbox{if} \quad x \leq x_v.
\]

For $x > x_v$, the cusp neighborhood $R(x)$ can be constructed from a
horoball and a cone. More precisely: take a horospherical cusp, and
cut it along a vertical slit of length $r = x-x_v$. Then, take a cone
of radius $r$ and angle $\theta = \theta_v - 2\pi$, and cut it open
along a radius. Gluing these pieces together along the slits produces
$R(x)$. The area of a hyperbolic cone with radius $r$ and angle
$\theta$ can be computed as $2\theta \sinh^2(r/2)$. Adding this to the
area of a horospherical cusp, we obtain
\[
\area(R(x)) = \left\{
\begin{array}{l l}
e^x\, \area(Q) & \mbox{if} \quad x < x_v, \\
e^x\, \area(Q) + 2 \theta \sinh^2((x-x_v)/2) & \mbox{if} \quad x \geq x_v.
\end{array} \right.
\]

To complete the proof, it suffices to check that the function $f(x) =
\sinh^2 (x/2)$ grows super-exponentially for $x\geq 0$:
\begin{eqnarray*}
\left( \sinh  \frac{x+d}{2} \right)^2 &=& \left( \sinh \frac{x}{2}\,
  \cosh \frac{d}{2} + \cosh \frac{x}{2} \, \sinh \frac{d}{2} \right)^2\\ 
&>&  \left( \sinh \frac{x}{2}\,  \cosh \frac{d}{2} + \sinh \frac{x}{2} \, \sinh \frac{d}{2} \right)^2\\
&=& \left(e^{d/2} \sinh \frac{x}{2} \right)^2 \\
&=& e^d \sinh^2 \left( \frac{x}{2} \right).
\end{eqnarray*}
Thus, since the area of $R(x)$ is the sum of two functions, each of
which grows at least exponentially with $x$, it follows that
$\area(R(x+d)) \geq e^d \, \area(R(x))$.
\end{proof}

\begin{lemma}\label{lemma:circle-packing-bound}
Let $S$ be a simplicial hyperbolic surface with at most one singular
point. Let $R_{\max} \subset S$ be a maximal cusp neighborhood of a
puncture of $S$. Then
$$\area(R_{\max}) \leq -2\pi \chi(S).$$
Furthermore, if $S$ is non-singular, then
$$\area(R_{\max}) \leq -6 \chi(S).$$
\end{lemma}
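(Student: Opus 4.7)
The first inequality follows directly from Gauss--Bonnet. For a hyperbolic cone surface with cone points $v$ of cone angles $\theta_v$, curvature $-1$ on the smooth part, and no geodesic boundary, Gauss--Bonnet reads
\[
-\area(S) + \sum_v (2\pi - \theta_v) = 2\pi\chi(S).
\]
Under the hypothesis $\theta_v \geq 2\pi$, each summand $(2\pi - \theta_v)$ is non-positive, so $\area(R_{\max}) \leq \area(S) \leq -2\pi\chi(S)$.

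For the sharper bound in the non-singular case, I would use a circle-packing argument. Fix an ideal triangulation $\tau$ of $S$ adapted to $R_{\max}$---for instance, the Delaunay/Epstein--Penner triangulation dual to the horoballs lifting $R_{\max}$ in $\HH^2$. A standard Euler-characteristic count gives $T = -2\chi(S)$ triangles. Lift to $\HH^2$: the cusp $R_{\max}$ lifts to a family of pairwise disjoint horoballs at the lifts of the puncture, with tangencies coming from maximality. In each ideal triangle $\widetilde{\Delta}$ of the lifted triangulation, the horoballs meet $\widetilde{\Delta}$ in cusp-tip regions whose total area equals the sum of their horocyclic arc lengths inside $\widetilde{\Delta}$. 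The key input is the two-dimensional horoball packing density bound: in each $\widetilde{\Delta}$, the total cusp-tip area is at most $3$, with equality precisely when three pairwise tangent horocycles of unit arc length sit at the three vertices. Summing over the $T$ triangles of $\tau$ then yields $\area(R_{\max}) \leq 3T = -6\chi(S)$.

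The main obstacle is establishing the per-triangle bound of $3$. Without further structure on $\tau$, one could have a triangle $\widetilde{\Delta}$ with only one puncture-lift as a vertex, in which case the horoball there could a priori fill almost all of $\widetilde{\Delta}$ (up to area $\pi$), defeating the desired bound. This is precisely why $\tau$ is chosen to be the Delaunay triangulation relative to $R_{\max}$: in such a triangulation, the edge-by-edge disjointness constraints (Penner $\lambda$-lengths at least $1$) are tight enough to force, via a direct Lagrange-multiplier computation on horocyclic arc lengths at the three vertices of $\widetilde{\Delta}$, the per-triangle sum to be at most $3$. Once this packing inequality is in hand, the rest of the argument is a straightforward summation.
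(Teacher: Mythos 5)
Your first inequality is exactly the paper's argument: Gauss--Bonnet for the cone metric gives $\area(S) = -2\pi\chi(S) + (2\pi - \theta_v) \leq -2\pi\chi(S)$, and $R_{\max} \subset S$, so that part is fine.

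For the non-singular bound there is a genuine gap, and it sits exactly where you flagged it. The per-triangle estimate does not follow from the constraints you impose: ``Penner $\lambda$-lengths at least $1$'' is nothing more than pairwise disjointness of the three vertex horoballs, and pairwise disjointness does not bound the total tip area in an ideal triangle by $3$. In the triangle with vertices $0,1,\infty$, take the horoball at $\infty$ of Euclidean height $h$ and horoballs of Euclidean diameter $h$ at $0$ and $1$: all three are pairwise disjoint for every $h \leq 1$, yet as $h \to 0$ the tip at $\infty$ exhausts the triangle and the total tip area tends to $\pi > 3$ (already for $h = 0.1$ the total exceeds $3$). Such triangles genuinely occur in ideal triangulations with vertices at lifts of the puncture, so no Lagrange-multiplier computation on the disjointness constraints alone can produce the bound $3$. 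What you actually need is the empty-circumscribed-disk (Delaunay) condition, which compares adjacent cells and is strictly stronger than $\lambda \geq 1$ on edges, and you have not proved the per-cell estimate under that condition --- that estimate is essentially the content of B\"or\"oczky's two-dimensional horoball packing theorem, which is precisely what the paper cites instead: the density bound says at most $3/\pi$ of $\area(S)$ can lie in $R_{\max}$, and combined with $\area(S) \leq -2\pi\chi(S)$ this gives $\area(R_{\max}) \leq -6\chi(S)$ with no triangulation at all. Two further wrinkles in your setup would also need attention even if the per-cell bound were established: the Epstein--Penner/Delaunay construction requires a decoration at every cusp, whereas $R_{\max}$ decorates only the one puncture (if $S$ has other punctures they must be decorated too, harmlessly, but this must be said), and the Delaunay cells need not be triangles, so the estimate must be stated for ideal polygons or one must check that subdividing does not spoil it.
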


\begin{proof}
Let $\theta_v \geq 2\pi$ be the cone angle at the singular point. (As
above, we take $\theta_v = 2\pi$ if the surface $S$ is non-singular.)
Then, by the Gauss--Bonnet theorem~\cite[Theorem 3.15]{chk:orbifold},
\begin{equation}\label{eq:gauss-bonnet}
\area(R_{\max}) \: \leq \: \area(S) \: = \:  - 2\pi \chi(S) + [2\pi - \theta_v]  \: \leq \:  - 2\pi \chi(S).
\end{equation}

If $S$ is a non-singular surface, then horosphere packing estimates of
B\"or\"oczky~\cite{boroczky} imply that at most $3/\pi$ of the area of
$S$ can be contained in the cusp neighborhood $R_{\max}$. Thus the
above estimate improves to $\area(R_{\max}) \leq -6 \chi(S)$.
\end{proof}

\begin{remark} 
By the Gauss--Bonnet theorem expressed in equation
\eqref{eq:gauss-bonnet}, the area of $S$ decreases as the cone angle
$\theta_v$ increases from $2\pi$. Thus it seems reasonable that the
area of a maximal cusp $R_{\max}$ would also decrease as $\theta_v$
increases from $2\pi$. If this conjecture is true, then the estimate
$\area(R_{\max}) \leq -6 \chi(S)$ would hold for singular hyperbolic
surfaces as well as non-singular ones.
\end{remark}

\begin{lemma}
\label{lemma:length-bound}
Let $S$ be a simplicial hyperbolic surface with at most one singular
point. Let $R \subset S$ be an embedded equidistant neighborhood of a
puncture of $S$. Then there exists a geodesic arc $\alpha$ from $R$ to
$R$, satisfying
\[
\ell(\alpha) \leq 2 \ln \abs{ 2\pi \,  \chi(S) / \area(R) }.
\]
Furthermore, if $S$ is non-singular, then 
\[
\ell(\alpha) \leq 2 \ln \abs{ 6 \,  \chi(S) / \area(R) }.
\]
\end{lemma}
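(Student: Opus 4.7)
The plan is to expand $R$ to a maximal equidistant cusp neighborhood $R_{\max}$ about the same puncture, bound the expansion distance using the area lemmas, and use the self-tangency of $\bdy R_{\max}$ to exhibit a short essential arc from $R$ back to $R$.

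In the notation of Definition~\ref{def:equidistant-cusp}, write $R = R(d_1)$ as the $d_1$-neighborhood of the horoball quotient $Q$, and let $R_{\max} = R(d_{\max})$ be the maximal equidistant neighborhood of the same puncture obtained by continuing to expand from $Q$. Set $d = d_{\max} - d_1$. Combining Lemmas~\ref{lemma:expansion-rate} and~\ref{lemma:circle-packing-bound} yields
\[
e^d \area(R) \leq \area(R_{\max}) \leq -2\pi \chi(S),
\]
with the right-hand bound improving to $-6 \chi(S)$ in the non-singular case. Taking logarithms gives
\[
d \leq \ln \left| \frac{2\pi \chi(S)}{\area(R)} \right|,
\]
with the analogous improvement when $S$ is non-singular.

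It remains to produce an essential geodesic arc from $R$ to $R$ of length at most $2d$. Assuming $R$ is non-maximal (the maximal case follows by approximation from below), Lemma~\ref{lemma:embeddedness}(1) supplies a geodesic $\alpha$ realizing the minimum length among essential paths from $R$ to $R$. I would then exhibit an explicit essential arc of length $2d$. The maximality of $R_{\max}$ is realized by a self-tangency of $\bdy R_{\max}$: two distinct preimage points on the boundary horocycle coincide in $S$. Starting at this self-tangency point and following the normal direction toward the cusp on each of the two sheets for distance $d$ produces an essential arc with both endpoints on $\bdy R$ and total length $2d$. Therefore $\ell(\alpha) \leq 2d$, and combining with the bound on $d$ gives the desired inequality.

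The one delicate step is verifying that the maximality of $R_{\max}$ is always witnessed by a boundary self-tangency, rather than by the neighborhood engulfing another cusp or changing topology for some other reason. This holds because in the universal cover a horoball about one puncture cannot reach a horoball about a different puncture (they meet only at the ideal boundary), and because the presence of cone points in the interior preserves the annular topology required by Definition~\ref{def:equidistant-cusp}. Once this point is in hand, the rest of the argument is an elementary assembly of the area lemmas proved earlier in this section.
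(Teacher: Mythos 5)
Your proposal is correct and follows essentially the same route as the paper's proof: expand $R$ to the maximal neighborhood $R_{\max}$, combine Lemma~\ref{lemma:expansion-rate} with Lemma~\ref{lemma:circle-packing-bound} to bound the expansion distance $d$, and relate $d$ to the shortest essential arc via the self-tangency of $\bdy R_{\max}$. The only difference is cosmetic: the paper starts from the shortest arc $\alpha$ and observes $d = \ell(\alpha)/2$, while you bound $d$ first and then exhibit an essential arc of length $2d$ through the tangency point, which is the same geometric fact read in the other direction.
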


\begin{proof}
By Lemma~\ref{lemma:embeddedness}, there is a geodesic arc $\alpha$
that is shortest among all essential arcs from $R$ to $R$. Let
$R_{\max}$ be the maximal cusp neighborhood containing $R$. Then, by
construction, $R_{\max}$ meets itself at the midpoint of
$\alpha$. Thus the distance from $\bdy R$ to $\bdy R_{\max}$ is $d =
\ell(\alpha)/2$.  By Lemma~\ref{lemma:expansion-rate}, this implies
\[
e^{d}\,  \area(R) \: \leq \: \area(R_{\max}),
\]
which simplifies to
\[
\ell(\alpha) \: = \: 2d \: \leq \: 2 \ln \left( \area(R_{\max} )/
\area(R) \right) .
\]
Substituting the bound on $\area(R_{\max})$ from Lemma
\ref{lemma:circle-packing-bound} completes the proof.
\end{proof}


\section{Upper bound: fibered manifolds}\label{sec:upper}

In this section, we prove the upper bounds of Theorem
\ref{thm:main}. We begin with a slightly simpler statement:

\begin{theorem}
\label{thm:upper}
Let $F$ be an orientable hyperbolic surface with a preferred puncture
$p$, and let $\psi \from F \to F$ be an orientation-preserving,
pseudo-Anosov homeomorphism such that $\psi(p)=p$. In the mapping
torus $M_\psi$, let $C$ be the maximal 
cusp that corresponds to $p$. Then
\[
\area(\bdy C) \: \leq \: 9 \, \chi(F)^2 \, d_{\arc} (\psi)
\quad \mbox{ and } \quad
\height(\bdy C) \: < \: -3 \, \chi(F) \, d_{\arc} (\psi).
\]
\end{theorem}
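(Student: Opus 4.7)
The plan is to translate a geodesic edge-path in $\arc(F,p)$ into a chain of pleated surfaces in $M_\psi$, and read off the geometry of $\bdy C$ from the cross-sections these surfaces cut out. Let $n = d_{\arc}(\psi)$; first I would pick a geodesic edge-path $v_0, v_1, \ldots, v_n = \psi(v_0)$ in $\arc^{(1)}(F,p)$, so that consecutive arcs $v_i, v_{i+1}$ are disjoint. For each $i \in \{0, \ldots, n-1\}$, I would extend the disjoint pair $\{v_i, v_{i+1}\}$ to an ideal triangulation $\tau_i$ of $F$, and set $\tau_n := \psi(\tau_0)$; note that the arc $v_n = \psi(v_0)$ lies in both $\tau_{n-1}$ and $\tau_n$. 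By Proposition~\ref{prop:pleating}, each $\tau_i$ is realized by a pleating map $f_i \from F \to M_\psi$, and Lemma~\ref{lemma:projection-curve} produces a closed piecewise-geodesic curve $\gamma_i \subset \bdy C$ freely homotopic to the longitude, with $\ell(\gamma_i) = \area(R_i) \leq -6\chi(F)$ by the non-singular case of Lemma~\ref{lemma:circle-packing-bound}.

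The key geometric observation is that because $\tau_i$ and $\tau_{i+1}$ both contain the arc $v_{i+1}$, which is incident to the puncture $p$, both pleating maps realize $v_{i+1}$ by the same geodesic in $M_\psi$; the point(s) at which this geodesic enters $C$ therefore appear as common vertices of $\gamma_i$ and $\gamma_{i+1}$ on $\bdy C$. Next I would lift the entire picture to the cyclic cover $\bdy\widehat C \cong S^1 \times \RR$ of $\bdy C$ associated with the longitude. Each $\gamma_i$ lifts to a closed loop $\widehat\gamma_i$ homotopic to the $S^1$ factor, and the lifts can be chosen coherently so that consecutive $\widehat\gamma_i$'s share a vertex, and $\widehat\gamma_n$ is exactly the meridian-translate of $\widehat\gamma_0$ under the deck transformation.

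Starting from a shared vertex $w_0 \in \widehat\gamma_0 \cap \widehat\gamma_1$, I would build a path $P \subset \bdy\widehat C$ by, for each $i = 1, \ldots, n-1$, traversing the shorter of the two arcs of $\widehat\gamma_i$ running from $w_{i-1}$ to a vertex $w_i \in \widehat\gamma_i \cap \widehat\gamma_{i+1}$. Each segment has length at most $\ell(\gamma_i)/2 \leq -3\chi(F)$, giving $\ell(P) \leq (n-1)(-3\chi(F))$. Since $P$ runs from a point of $\widehat\gamma_0$ to its meridian-translate on $\widehat\gamma_n$, its projection to $\bdy C$ is a closed curve realizing a slope distinct from the longitude, so by \eqref{eq:height} it has length at least $\height(\bdy C)$; this yields $\height(\bdy C) < -3\chi(F)\,d_{\arc}(\psi)$. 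Combined with $\lambda \leq \ell(\gamma_0) \leq -6\chi(F)$ (since $\gamma_0$ is freely homotopic to the longitude), I obtain $\area(\bdy C) = \lambda \cdot \height(\bdy C) \leq 18\,\chi(F)^2\, d_{\arc}(\psi)$.

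The main remaining obstacle is closing the factor-of-two gap between this bound and the claimed $9\,\chi(F)^2\,d_{\arc}(\psi)$. The hard part will be recovering this missing factor; I expect it to come either from interpolating consecutive $f_i$'s by a continuous family of pleated or simplicial hyperbolic surfaces (averaging the length estimate over the sweepout instead of sampling at integer times), or from exploiting that each shared arc $v_{i+1}$ contributes \emph{two} common vertices to $\gamma_i$ and $\gamma_{i+1}$ (one per endpoint at $p$), enabling the path to halve its commitment in both the longitude and the meridian directions simultaneously. Either way, the discrete chain of pleated surfaces built above, together with the bounded-length curves $\gamma_i$ that Lemma~\ref{lemma:projection-curve} supplies, provides the essential geometric structure; the remaining factor is a matter of careful bookkeeping.
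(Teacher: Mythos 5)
Your setup reproduces the paper's: a geodesic $v_0,\dots,v_n=\psi(v_0)$ in $\arc^{(1)}(F,p)$, triangulations $\tau_i$ containing consecutive arcs, pleated surfaces via Proposition~\ref{prop:pleating}, and the curves $\gamma_i$ of length at most $-6\chi(F)$ from Lemmas~\ref{lemma:projection-curve} and~\ref{lemma:circle-packing-bound}, with consecutive curves meeting at the cusp-entry point of the common arc (the paper phrases this as $\gamma_i$ visiting the heights $h(a_{i-1})$ and $h(a_i)$). However, your path $P$ does not do what you claim: its initial point $w_0$ is the entry point of $v_1$, while its terminal point $w_{n-1}$ is the entry point of $v_n=\psi(v_0)$, and these are \emph{not} deck-translates of one another, so $P$ does not project to a closed curve on $\bdy C$ and the statement that it ``runs from a point of $\widehat\gamma_0$ to its meridian-translate'' is false. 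The fix is to append one more segment, along $\widehat\gamma_n=Z(\widehat\gamma_0)$, from $w_{n-1}$ to $Z(w_0)$; this gives $n$ segments (not $n-1$) and recovers exactly the paper's height bound, which is obtained there by summing $\abs{h(a_i)-h(a_{i-1})}<-3\chi(F)$ over $i$ (Lemma~\ref{lemma:height-segment}). Your $(n-1)$-segment count, which would improve on the theorem, is an artifact of the erroneous closing-up claim.

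The substantive gap is the area estimate: your argument only yields $\area(\bdy C)\le\lambda\cdot\height(\bdy C)\le 18\,\chi(F)^2\,d_\arc(\psi)$, and the missing factor of $2$ is precisely where the paper's proof differs from yours. The paper never multiplies separate bounds on $\lambda$ and on the height; instead (Lemma~\ref{lemma:area-segment}) it bounds, for each $i$, the area of the horizontal band $B_i\subset A$ between heights $h(a_{i-1})$ and $h(a_i)$ directly: the single closed curve $\gamma_i$, of length at most $-6\chi(F)$, must wrap once around $B_i$ \emph{and} cross between its two boundary circles at least twice, and the ``triangular corral'' optimization (two equal crossing segments meeting at right angles) forces $\area(B_i)=\lambda\cdot\abs{h(a_i)-h(a_{i-1})}\le 9\,\chi(F)^2$; summing over $i$, since a fundamental domain of $\bdy C$ lies in $B_1\cup\dots\cup B_k$, gives the constant $9$. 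It is this joint use of the length budget --- the same $-6\chi(F)$ must pay for both the longitudinal wrap and the two vertical crossings --- that saves the factor of $2$. Neither of your proposed repairs supplies this: averaging over a continuous sweepout has no evident meaning for the fixed quantity $\area(\bdy C)$, and the ``two shared vertices per arc'' idea does not even apply when the shared arc $v_{i+1}$ has only one endpoint at $p$, which $\arc(F,p)$ permits. So as written the bound $\area(\bdy C)\le 9\,\chi(F)^2\,d_\arc(\psi)$ is not established.
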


Theorem~\ref{thm:upper} differs from the upper bound of Theorem
\ref{thm:main} in two relatively small ways. First, Theorem
\ref{thm:upper} restricts attention to monodromies that fix the
puncture $p$.  (Given an arbitrary pseudo-Anosov $\mon$, one can let
$\psi$ be the smallest power of $\mon$ such that $\psi(p)=p$.)
Second, Theorem~\ref{thm:upper} estimates cusp area and height in
terms of the translation distance $d_\arc(\psi)$, rather than the
stable translation distance $\dbar_\arc(\psi)$. We shall see at the
end of the section that this simpler statement quickly implies the
upper bound of Theorem~\ref{thm:main}.

\begin{proof}[Proof of Theorem~\ref{thm:upper}]
The proof involves a direct construction. Suppose that $d_{\arc}
(\psi) = k$. Then, by Definition~\ref{def:trans-distance}, there is a
vertex $a_0 \in \arc^{(0)}(F,p)$, that is an isotopy class of arc
in $F$ meeting the puncture $p$, so that $d_\arc(a_0, \psi(a_0)) = k$. Fix
a geodesic segment in $\arc^{(1)}(F,p)$ with vertices
\[
a_0, a_1, \ldots, a_k = \psi(a_0).
\] 
By Definition~\ref{def:arc-complex}, the arcs representing $a_{i-1}$
and $a_i$ are disjoint. Thus, for every $i=1,\ldots,k$, we can choose
an ideal triangulation $\tau_i$ of $F$ that contains $a_{i-1}$ and
$a_i$.  In the arc complex $\arc(F,p)$, the geodesic segment from
$a_0$ to $a_k$ extends to a bi-infinite, $\psi$--invariant, piecewise
geodesic. Similarly, the sequence of ideal triangulations $\tau_1,
\ldots, \tau_k$ extends to a bi-infinite sequence of triangulations in
which $\tau_{i+k} = \psi(\tau_i)$.

To prove the upper bounds on cusp area and height, it is convenient to
work with the infinite cyclic cover of $M_\psi$. This is a hyperbolic
manifold $N_{\psi} \cong F \times \RR$, in which the torus cusps of
$M$ lift to annular, rank one cusps.  Let $A \subset N_{\psi}$ be the
lift of $\bdy C$ that corresponds to the puncture $p$ of $F$.

We choose geodesic coordinates for the Euclidean metric on $A \cong
S^1 \times \RR$, in which the non-trivial circle (a longitude about
$p$) is horizontal, and the $\RR$ direction is vertical. We also
choose an orientation for every arc $a_i$, such that the oriented edge
$a_i$ points into the preferred puncture $p$. In the $3$--manifold
$N_{\psi}$, the edge $a_i$ is homotopic to a unique oriented
geodesic. Given our choices, every arc $a_i$ has an associated
\emph{height} $h(a_i)$, namely the vertical coordinate of the point of
$A$ where the oriented geodesic representing $a_i$ enters the
cusp. For simplicity, we may assume that $h(a_0) = 0$ and $h(a_k) =
h(\psi(a_0)) > 0$. See Figure~\ref{fig:annulus-heights}.

\begin{figure}
\begin{center}
\begin{overpic}{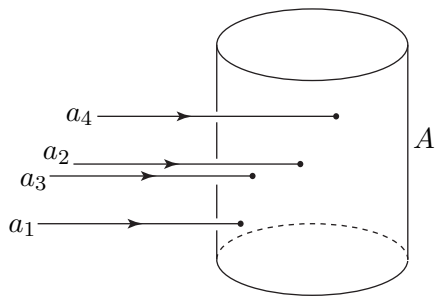}
\put(-6,18){$a_1$}
\put(-3,30){$a_3$}
\put(3,37){$a_2$}
\put(9,47){$a_4$}
\put(100,40){$A$}
\end{overpic}
\caption{The edges $a_i$ enter the cusp annulus $A \subset N_{\psi}$
at well-defined heights.}
\label{fig:annulus-heights}
\end{center}
\end{figure}

To estimate distances on the annulus $A$, we place many copies of the
fiber $F$ into pleated form.  That is, for each ideal triangulation
$\tau_i$, $i \in \ZZ$, Proposition~\ref{prop:pleating} implies that
the fiber $F \times \{0 \} \subset N_\psi = F \times \RR$ can be
homotoped in $N_\psi$ to a pleated surface $F_i$ realizing the
triangulation $\tau_i$. Recall that every ideal triangle of $\tau_i$
is totally geodesic in $F_i$.

For each pleated surface $F_i$, Lemma~\ref{lemma:projection-curve}
gives a possibly self-intersecting, piecewise geodesic closed curve
$\gamma_i \subset A$, which is homotopic to a loop about the puncture
$p$. The curve $\gamma_i$ is not necessarily contained in $F_i \cap
A$, but we do know that the vertices where it bends are the endpoints
of edges of $\tau_i$ meeting the annulus $A$. See Figure
\ref{fig:pleated} for a review.

\begin{lemma}\label{lemma:horocycle-length}
Each piecewise linear closed curve $\gamma_i \subset A$, determined by
the pleated surface $F_i$, has length $\: \ell(\gamma_i) \leq -6 \,
\chi(F)$.
\end{lemma}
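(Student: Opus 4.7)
The plan is to directly chain three of the results already established in Sections~\ref{sec:pleated} and~\ref{sec:cone-surface}: a length equality for $\gamma_i$, an area comparison in the pleated surface $F_i$, and the cusp-packing bound from Gauss--Bonnet plus B\"or\"oczky.

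First I would invoke Lemma~\ref{lemma:projection-curve} applied to the pleating map $f_i \from F \to N_\psi$ realizing $\tau_i$, with the horocusp being (any truncation of) the cusp whose boundary is $A$. This gives an equality
\[
\ell(\gamma_i) \: = \: \ell(\bdy R_i) \: = \: \area(R_i),
\]
where $R_i \subset F$ is the equidistant horospherical cusp neighborhood of $p$ produced by Lemma~\ref{lemma:pleated-cusp} in the induced hyperbolic metric on $F_i$. So the task is reduced from estimating a length in the cusp annulus $A$ to estimating an area in the pleated surface.

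Next I would use the fact that a pleated surface carries a genuine (non-singular) complete hyperbolic metric pulled back from $N_\psi$. Since $R_i$ is by construction an embedded equidistant cusp neighborhood in $F_i$, it is contained in some maximal equidistant cusp neighborhood $R_{\max} \subset F_i$ about the puncture $p$. Monotonicity of area under inclusion then gives $\area(R_i) \leq \area(R_{\max})$.

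Finally I would apply Lemma~\ref{lemma:circle-packing-bound}, whose non-singular case asserts $\area(R_{\max}) \leq -6\chi(F_i) = -6\chi(F)$ for the maximal cusp in a non-singular hyperbolic surface. Stringing these steps together yields
\[
\ell(\gamma_i) \: = \: \area(R_i) \: \leq \: \area(R_{\max}) \: \leq \: -6\chi(F),
\]
as required. There is no real obstacle here; the only subtlety worth flagging is verifying that the construction of Lemma~\ref{lemma:pleated-cusp} genuinely produces an \emph{equidistant} cusp neighborhood (so that it makes sense to enlarge it to a maximal one and invoke the B\"or\"oczky-based estimate), which is immediate from the fact that $R_i$ was defined as a closed metric $d$--neighborhood of a horocyclic cusp $R_i^0$.
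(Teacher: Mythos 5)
Your proposal is correct and follows essentially the same route as the paper: the paper's proof also cites Lemma~\ref{lemma:projection-curve} to get $\ell(\gamma_i)=\area(R_i)$ and then Lemma~\ref{lemma:circle-packing-bound} to bound this by $-6\chi(F)$; your explicit intermediate step $\area(R_i)\leq\area(R_{\max})$ just spells out what the paper leaves implicit.
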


\begin{proof}
Lemma~\ref{lemma:projection-curve} states that $\ell(\gamma_i) =
\ell(\bdy R_i) = \area(R_i)$, where $R_i \subset F_i$ is an embedded
horospherical neighborhood of the puncture $p$. By Lemma
\ref{lemma:circle-packing-bound}, $\area(R_i) \leq -6 \, \chi(F)$.
\end{proof}

See Agol~\cite[Theorem 5.1]{agol:6theorem} or Lackenby~\cite[Lemma
3.3]{lackenby:surgery} for a very similar statement, on which Lemma
\ref{lemma:horocycle-length} is based.

Applying Lemma~\ref{lemma:horocycle-length} to the pleated surface
$F_i$ gives a height estimate.

\begin{lemma}\label{lemma:height-segment}
The heights of consecutive arcs satisfy 
$\: \abs{h(a_i) - h(a_{i-1})} < -3 \, \chi(F)$.
\end{lemma}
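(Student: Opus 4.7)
My plan is to identify the points where the geodesic representatives of $a_{i-1}$ and $a_i$ enter the cusp annulus $A$ as two distinguished vertices of the piecewise-geodesic curve $\gamma_i$ associated to the pleated surface $F_i$, and then to bound the vertical distance between them by traversing the shorter of the two subarcs of $\gamma_i$ joining them. The structural input is Lemma \ref{lemma:projection-curve}: it tells us that $\gamma_i \subset A$ is a closed piecewise-geodesic loop, freely homotopic in $A$ to the longitude about $p$, whose vertices are exactly the points where edges of $\tau_i$ enter the cusp. Let $P_j \in A$ denote the point where the oriented geodesic representative of $a_j$ enters the cusp, so that $h(a_j)$ is the vertical coordinate of $P_j$. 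Since $a_{i-1}, a_i \in \tau_i$ both have an endpoint at $p$, both $P_{i-1}$ and $P_i$ occur among the vertices of $\gamma_i$.

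Next, parametrizing $\gamma_i$ by arc length splits it at $P_{i-1}$ and $P_i$ into two subarcs, of lengths $s_1$ and $s_2$ with $s_1 + s_2 = \ell(\gamma_i) \leq -6\chi(F)$ by Lemma \ref{lemma:horocycle-length}, so $\min(s_1,s_2) \leq -3\chi(F)$. Since $A \cong S^1 \times \RR$ carries a flat product metric in which $h$ is the vertical coordinate, the vertical displacement between the endpoints of any path in $A$ is bounded by the length of that path. Applying this to the shorter of the two subarcs joining $P_{i-1}$ to $P_i$ yields the weak inequality $|h(a_i) - h(a_{i-1})| \leq -3\chi(F)$.

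The only genuinely nontrivial step is upgrading this to the strict inequality. If equality held, then both the length bound and the vertical-displacement bound above would have to be saturated simultaneously: we would need $s_1 = s_2 = -3\chi(F)$, forcing $\ell(\gamma_i) = -6\chi(F)$, and moreover each of the two subarcs between $P_{i-1}$ and $P_i$ would have to be entirely vertical. But then the closed curve $\gamma_i$ would have zero net horizontal displacement in $A$, contradicting Lemma \ref{lemma:projection-curve}(1), which asserts that $\gamma_i$ is freely homotopic to the longitude about $p$ and therefore winds exactly once around the horizontal $S^1$ factor of $A$. I expect this final step---ruling out the equality case by invoking the nontrivial horizontal homotopy class of $\gamma_i$---to be the only part of the argument that is not entirely mechanical; the rest is straightforward bookkeeping of the data supplied by the previous two lemmas.
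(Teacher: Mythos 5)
Your proposal is correct and follows essentially the same route as the paper: place the two cusp-entry points of $a_{i-1}$ and $a_i$ on the closed curve $\gamma_i$ via Lemma~\ref{lemma:projection-curve}, use $\ell(\gamma_i)\leq -6\chi(F)$ from Lemma~\ref{lemma:horocycle-length} together with the fact that a closed curve traverses the vertical gap twice, and derive strictness from $\gamma_i$ having to wind once around the horizontal $S^1$ direction of $A$. Your treatment of the equality case is just a more explicit version of the paper's one-line remark that $\gamma_i$ must also travel around a horizontal loop.
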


\begin{proof}
By construction, the arcs $a_{i-1}$ and $a_i$ have endpoints at the
puncture $p$. Additionally, the geodesic representatives of both arcs
are contained in the triangulation $\tau_i$ along which $F_i$ is bent.
Thus the piecewise geodesic closed curve $\gamma_i$, containing the
vertices at the forward endpoints of $a_{i-1}$ and $a_i$, must visit
heights $h(a_{i-1})$ and $h(a_i)$. See Figure~\ref{fig:cusp-zigzag},
left.  Since $\ell(\gamma_i) \leq -6 \, \chi(F)$, and this closed
curve covers the distance between heights $h(a_{i-1})$ and $h(a_i)$ at
least twice, we conclude that
\[
\abs{h(a_i) - h(a_{i-1})} < -3 \, \chi(F).
\]
The inequality is strict because $\gamma_i$ must also travel around a
horizontal loop in $A$.
\end{proof}

\begin{figure}
\psfrag{a}{$a_{i-1}$}
\psfrag{b}{$a_i$}
\psfrag{g}{$\gamma_i$}
\psfrag{h0}{$h(a_{i-1})$}
\psfrag{h1}{$h(a_i)$}
\begin{center}
\begin{overpic}{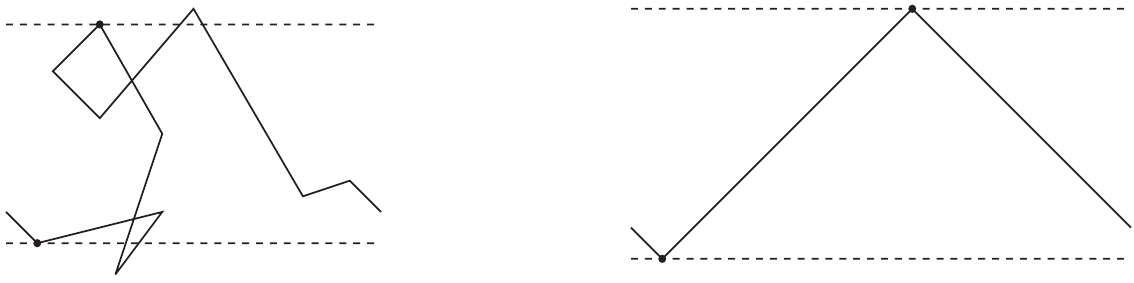}
\put(2,1){$a_{i-1}$}
\put(27,1){$h(a_{i-1})$}
\put(56,-0.5){$a_{i-1}$}
\put(93,-0.5){$h(a_{i-1})$}
\put(9,24){$a_i$}
\put(27,24){$h(a_i)$}
\put(81,25){$a_i$}
\put(93,25){$h(a_i)$}
\put(24,13){$\gamma_i$}
\put(91,14){$\gamma_i$}
\end{overpic}
\caption{Left: the polygonal closed curve $\gamma_i \subset A
$. Right: the shape of $\gamma_i$ that maximizes the area of the band
$B_i$ between heights $h(a_{i-1})$ and $h(a_i)$.}
\label{fig:cusp-zigzag}
\end{center}
\end{figure}

Lemma~\ref{lemma:horocycle-length} also leads to an area estimate.

\begin{lemma}
\label{lemma:area-segment}
Let $B_i \subset A$ be the band whose boundary consists of horizontal
circles at heights $h(a_{i-1})$ and $h(a_i)$. Then $\area(B_i) \leq 9
\, \chi(F)^2$.
\end{lemma}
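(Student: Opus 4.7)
The plan is to reduce the area bound to a length-vs-winding inequality for the closed curve $\gamma_i$. Since $A$ is a flat cylinder, the band $B_i$ is a Euclidean rectangle with $\area(B_i) = \lambda \cdot V$, where $\lambda$ is the longitude length of $A$ and $V = |h(a_i) - h(a_{i-1})|$. So it suffices to bound the product $\lambda V$.

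Next I would collect the relevant geometric data about $\gamma_i$. From Lemma~\ref{lemma:projection-curve}, $\gamma_i$ is homotopic in $C$ to a loop about $p$, and hence winds once around the longitudinal direction of $A$, so any lift $\widetilde{\gamma}_i$ to the universal cover $\widetilde{A} \cong \RR^2$ goes from a point to its $\lambda$--translate. The same lemma says the vertices of $\gamma_i$ come from $f(\tau_i) \cap \bdy C$, so in particular the forward endpoints of $a_{i-1}$ and $a_i$ give vertices of $\gamma_i$ at heights $h(a_{i-1})$ and $h(a_i)$. Reparametrize $\gamma_i$ to begin at the vertex at height $h(a_{i-1})$; then the lift $\widetilde{\gamma}_i$ is a polygonal path in $\RR^2$ from $(0,h(a_{i-1}))$ to $(\lambda, h(a_{i-1}))$ passing through some point at height $h(a_i)$. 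A standard reflection argument (reflect the endpoint across the line $y = h(a_i)$ and take the straight line to the start) gives the bound
\[
\ell(\gamma_i) \: \geq \: \sqrt{\lambda^2 + 4V^2}.
\]

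Finally I would combine this with AM--GM and Lemma~\ref{lemma:horocycle-length}. Since $\lambda^2 + 4V^2 \geq 2\sqrt{\lambda^2 \cdot 4V^2} = 4\lambda V$, we obtain
\[
4 \, \area(B_i) \: = \: 4\lambda V \: \leq \: \lambda^2 + 4V^2 \: \leq \: \ell(\gamma_i)^2 \: \leq \: 36\, \chi(F)^2,
\]
which gives $\area(B_i) \leq 9\, \chi(F)^2$. The only step requiring care is the geometric lower bound on $\ell(\gamma_i)$: one must verify that $\gamma_i$ really does wind once around the longitude and that its vertex set actually contains points at both heights $h(a_{i-1})$ and $h(a_i)$. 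Both facts follow directly from Lemma~\ref{lemma:projection-curve} and from the construction of the heights $h(a_i)$, so the remaining calculation is essentially a planar geometry exercise.
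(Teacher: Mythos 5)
Your proof is correct and takes essentially the same route as the paper: both rely on Lemma~\ref{lemma:horocycle-length} together with the facts (from Lemma~\ref{lemma:projection-curve} and the definition of the heights) that $\gamma_i$ winds once around $A$ and visits both heights $h(a_{i-1})$ and $h(a_i)$, and then optimize the resulting planar configuration. Your reflection-plus-AM--GM argument is just a cleaner formalization of the paper's reduction to the ``triangular corral'' problem (two equal segments meeting at a right angle), and both give the same bound $\area(B_i) = \lambda V \le \ell(\gamma_i)^2/4 \le 9\,\chi(F)^2$.
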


\begin{proof}
As in the proof of Lemma~\ref{lemma:height-segment}, we study the
piecewise geodesic closed curve $\gamma_i \subset A$. Since $\gamma_i$
meets both $a_{i-1}$ and $a_i$, it must meet both boundary components
of $B_i$. The goal is to determine the shape of $\gamma_i$ that allows
the largest possible area for $B_i$.

Without loss of generality, we may assume that $\gamma_i$ contains
exactly two geodesic segments connecting the two boundary circles of
$B_i$: otherwise, one can straighten $\gamma_i$ while stretching
$B_i$. Such a piecewise-linear loop consisting of two segments splits
$B_i$ into two isometric triangles: one triangle below $\gamma_i$, and
the other triangle above $\gamma_i$. See Figure~\ref{fig:cusp-zigzag},
right.

At this point, we have reduced to the classical calculus problem of
building a triangular corral adjacent to a river.  As is well-known,
the optimal shape for $\gamma_i$ is one where the two segments have
the same length and meet at right angles. By
Lemma~\ref{lemma:horocycle-length}, the total length of these two
equal segments is at most $-6 \, \chi(F)$. Therefore, the maximum
possible area for $B_i$ is $9 \, \chi(F)^2$.
\end{proof}

We can now complete the proof of Theorem~\ref{thm:upper}.  A
fundamental domain for the torus $\bdy C$ is the portion of $A$
between height $h(a_0) = 0$ and height $h(a_k) = h(\psi(a_0))$. This
fundamental domain is contained in $B_1 \cup \ldots \cup B_k$. (The
containment might be strict, since there is no guarantee that the
sequence $h(a_i)$ is monotonically increasing; see Figure
\ref{fig:annulus-heights}.) Thus, by Lemma~\ref{lemma:area-segment},
\[
\area(\bdy C) \: \leq \: \sum_{i=1}^{k} \area(B_i) \: \leq \: 9 k \,
\chi(F)^2.
\]
Similarly, by Lemma~\ref{lemma:height-segment},
\[
\height(\bdy C) \: = \:  h(a_k) - h(a_0) \: \leq \: 
\sum_{i=1}^{k} \abs{h(a_i) - h(a_{i-1})} \:< \: -3k \, \chi(F).
\]
Recalling that $d_{\arc}(\psi)= k$ completes the proof. 
\end{proof}

\begin{corollary}
\label{cor:avg-upper}
Let $F$ be an orientable hyperbolic surface with a preferred puncture
$p$, and let $\psi \from F \to F$ be an orientation-preserving,
pseudo-Anosov homeomorphism such that $\psi(p)=p$. In the mapping
torus $M_\psi$, let $C$ be the maximal 
cusp that corresponds to $p$. Then
\[
\area(\bdy C) \: \leq \: 9 \, \chi(F)^2 \, \dbar_{\arc} (\psi)
\quad \mbox{ and } \quad
\height(\bdy C) \: < \: -3 \, \chi(F) \, \dbar_{\arc} (\psi).
\]
\end{corollary}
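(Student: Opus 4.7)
The plan is to reduce Corollary~\ref{cor:avg-upper} to Theorem~\ref{thm:upper} by applying the latter to the powers $\psi^n$ and then letting $n \to \infty$. Since $\psi^n$ is pseudo-Anosov and fixes $p$, Theorem~\ref{thm:upper} applies directly to each mapping torus $M_{\psi^n}$, and the task is to compare the resulting estimate for $M_{\psi^n}$ to the desired estimate for $M_\psi$.

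The key geometric ingredient is that $M_{\psi^n}$ is the $n$--fold cyclic cover of $M_\psi$ associated to the fibration, and that the maximal cusp about $p$ behaves well under this cover. Because $\psi(p) = p$, there is a single cusp of $M_{\psi^n}$ lying over the $p$--cusp of $M_\psi$, and the restriction of the covering map is an $n$--fold cover of tori: the longitude (encircling $p$ within a single fiber) lifts isometrically, while the meridian (which winds once around the base $S^1$) is unwound. Hence the preimage $\tilde C \subset M_{\psi^n}$ of the maximal cusp $C$ is an embedded horocusp with
\[
\area(\bdy \tilde C) \: = \: n \, \area(\bdy C)
\quad \mbox{and} \quad
\height(\bdy \tilde C) \: = \: n \, \height(\bdy C).
\]
Because the maximal cusp $C^{(n)} \subset M_{\psi^n}$ about $p$ contains $\tilde C$, and uniform expansion of a horocusp scales both area and height upward, we get $\area(\bdy C^{(n)}) \geq n \, \area(\bdy C)$ and $\height(\bdy C^{(n)}) \geq n \, \height(\bdy C)$.

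Combining these inequalities with Theorem~\ref{thm:upper} applied to $\psi^n$ and dividing by $n$ yields
\[
\area(\bdy C) \: \leq \: 9\chi(F)^2 \cdot \frac{d_\arc(\psi^n)}{n}
\quad \mbox{and} \quad
\height(\bdy C) \: < \: -3\chi(F) \cdot \frac{d_\arc(\psi^n)}{n}
\]
for every $n \geq 1$. Two elementary facts together imply $d_\arc(\psi^n)/n \to \dbar_\arc(\psi)$: on the one hand, for any fixed base vertex $v$ one has $d_\arc(\psi^n) \leq d_\arc(v, \psi^n(v))$, whose average converges to $\dbar_\arc(\psi)$ by definition, so $\limsup d_\arc(\psi^n)/n \leq \dbar_\arc(\psi)$; on the other hand, $\dbar_\arc(\psi^n) = n\,\dbar_\arc(\psi)$ together with the general inequality $\dbar_\arc(\psi^n) \leq d_\arc(\psi^n)$ gives $d_\arc(\psi^n)/n \geq \dbar_\arc(\psi)$. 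Passing to the limit gives the stated bounds; for the strict height inequality, the strict bound at each level $n$, coupled with the fact that the right-hand sides already lie at or above $-3\chi(F)\,\dbar_\arc(\psi)$, propagates to the limit.

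The only genuine obstacle is the cover bookkeeping in the second paragraph---verifying that $\tilde C$ really is embedded, that it is an $n$--fold isometric cover of $C$ with the asserted longitude/meridian behavior, and that enlarging to $C^{(n)}$ cannot decrease either area or height. After that, everything reduces to a routine limit computation.
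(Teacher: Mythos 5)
Your proposal is correct and follows essentially the same route as the paper's proof: apply Theorem~\ref{thm:upper} to $M_{\psi^n}$, observe that the maximal cusp $C$ of $M_\psi$ lifts to an embedded horocusp in $M_{\psi^n}$ whose area (and height) is $n$ times that of $C$ and which sits inside the maximal cusp of $M_{\psi^n}$, divide by $n$, and pass to the limit via the comparison of $d_{\arc}(\psi^n)/n$ with $\dbar_{\arc}(\psi)$ (the paper only needs $\liminf_n d_{\arc}(\psi^n)/n \leq \dbar_{\arc}(\psi)$, whereas you establish the full limit). Your treatment of the strict height inequality in the limit is at the same level of informality as the paper's ``the identical calculation goes through,'' so no essential difference or gap remains.
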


Corollary~\ref{cor:avg-upper} differs from Theorem~\ref{thm:upper} in
that $d_\arc$ has been replaced by $\dbar_\arc$. Since
$\dbar_\arc(\psi) \leq d_\arc(\psi)$ by triangle inequalities, the
statement of Corollary~\ref{cor:avg-upper} is slightly sharper.

\begin{proof}[Proof of Corollary~\ref{cor:avg-upper}]
Let $n \geq 1$. The maximal cusp $C$ of $M_\psi$ lifts to an embedded
horocusp in $M_{\psi^n}$, whose area is $n \cdot \area(\bdy
C)$. Applying Theorem~\ref{thm:upper} to $M_{\psi^n}$, we obtain
\[
n \cdot \area(\bdy C) \: \leq \: 9 \, \chi(F)^2 \, d_{\arc} (\psi^n).
\]
Thus
\begin{eqnarray*}
 \area(\bdy C) 
&\leq& 9 \, \chi(F)^2 \, \inf_{n \geq 1} \frac{ d_{\arc} (\psi^n)}{n} \\
&\leq& 9 \, \chi(F)^2 \, \liminf_{n \to \infty} \frac{ d_{\arc} (\psi^n)}{n} \\
&\leq& 9 \, \chi(F)^2 \, \, \dbar_{\arc} (\psi).
\end{eqnarray*}
The identical calculation goes through for $\height(\bdy C)$.
\end{proof}

\begin{proof}[Proof of Theorem~\ref{thm:main}, upper bound]
Let $\mon \from F \to F$ be a pseudo-Anosov homeomorphism, and let
$\psi = \mon^n$ be the smallest power of $\mon$ that fixes the
puncture $p$. Let $C$ be the maximal cusp of $M_\mon$ corresponding to
$p$. Then $C$ lifts to a (not necessarily maximal) horocusp $C'
\subset M_\psi$, which is a one-sheeted cover of $C$.
Corollary~\ref{cor:avg-upper} gives upper bounds on the area and
height of the maximal cusp of $M_\psi$, which implies upper bounds on
the (possibly smaller) area and height of $C$.
\end{proof}


\section{Upper bound: quasi-Fuchsian manifolds}
\label{sec:upper-qf}
In this section, we prove the upper bounds of Theorem
\ref{thm:main-qf}. The proof strategy is nearly the same as the proof
of Theorem~\ref{thm:upper}, with the quasi-Fuchsian manifold $N \cong
F \times \RR$ playing the same role as $N_\psi $ in
the previous section. The main geometric difference is that
$\core(N_\psi)$ is the whole manifold, whereas $\core(N)$ has finite
volume and finite cusp area.

Let $C \subset N$ be the maximal cusp corresponding to the puncture
$p$ of $F$. As in Section~\ref{sec:upper}, we choose geodesic
coordinates on $A = \bdy C \cong S^1 \times \RR$, in which the $\RR$
direction is vertical. Since $\core(N)$ is convex, the intersection
$\core(N) \cap A$ must be a compact annulus whose boundary is a pair
of horizontal circles. We choose the orientation on $\RR$ so that
$\bdy_+ \core(N)$ is higher than $\bdy_- \core(N)$. Then every
oriented essential arc $a_i \subset N$ whose forward endpoint is at
$C$ has a well-defined height $h(a_i)$, namely the vertical coordinate
of the point on $A$ where the geodesic homotopic to $a_i$ enters the
cusp. See Figure~\ref{fig:annulus-heights}.

Following Definition~\ref{def:distance-qf}, let $\Delta_\pm(N)$ be the
simplex in $\arc(F,p)$ consisting of all shortest arcs from $p$ to $p$
in $\bdy_\pm \core (N)$. Let $a_0 \in \Delta_-(N)$ and $a_k \in
\Delta_+(N)$ realize the distance between these simplices, so that
\[ 
k \: = \:  d_\arc(a_0, a_k) \: = \:  d_\arc(N,p). 
\]
Since $a_0$ has both of its endpoints at $p$, we may choose the
orientation on $a_0$ so that the point where $a_0$ enters the cusp is
the lower of the two endpoints. Similarly, we choose the orientation
on $a_k$ so that the point where $a_k$ enters the cusp is the higher
of the two endpoints.

\begin{lemma}
\label{lemma:upper-qf-start}
Let $B \subset A$ be the compact annular band whose boundary consists
of horizontal circles at heights $h(a_0)$ and $h(a_k)$. Then
\[
\area(B) \: \leq \: 9 \, \chi(F)^2 \, d_\arc(N, p)
\quad \mbox{ and } \quad
|h(a_k) - h(a_0)| \: < \: -3 \, \chi(F) \, d_\arc(N, p).
\]
\end{lemma}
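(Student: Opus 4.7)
The plan is to imitate the proof of Theorem~\ref{thm:upper} almost verbatim, replacing the $\psi$--invariant bi-infinite sequence of triangulations with a finite one adapted to the chosen endpoints $a_0 \in \Delta_-(N)$ and $a_k \in \Delta_+(N)$. Setting $k = d_\arc(N, p)$, I first choose a geodesic segment $a_0, a_1, \ldots, a_k$ in $\arc^{(1)}(F, p)$ realizing this distance. For each $i = 1, \ldots, k$, the consecutive vertices $a_{i-1}$ and $a_i$ span an edge of $\arc(F,p)$, so they have disjoint representatives and can be extended to an ideal triangulation $\tau_i$ of $F$. By Proposition~\ref{prop:pleating}, I homotope a copy of the fiber $F$ in $N$ to a pleated surface $F_i$ that realizes $\tau_i$; by Lemma~\ref{Lem:Pleated}, each $F_i$ lies in $\core(N)$.

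Next, for each $F_i$, Lemma~\ref{lemma:projection-curve} produces a piecewise geodesic closed curve $\gamma_i \subset A$, homotopic to a loop about $p$, whose bend points lie at the endpoints of those edges of $\tau_i$ that enter the cusp $C$. Since both $a_{i-1}$ and $a_i$ appear as edges of $\tau_i$, the loop $\gamma_i$ passes through the heights $h(a_{i-1})$ and $h(a_i)$ on $A$. Combining Lemma~\ref{lemma:projection-curve} with Lemma~\ref{lemma:circle-packing-bound}, exactly as in Lemma~\ref{lemma:horocycle-length}, I get $\ell(\gamma_i) \leq -6\,\chi(F)$.

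From here the two estimates follow by the same arguments as Lemmas~\ref{lemma:height-segment} and~\ref{lemma:area-segment}: since $\gamma_i$ covers the vertical interval between $h(a_{i-1})$ and $h(a_i)$ at least twice (and winds horizontally around $A$ at least once), one obtains $|h(a_i) - h(a_{i-1})| < -3\,\chi(F)$; the isoperimetric/calculus argument then bounds $\area(B_i) \leq 9\,\chi(F)^2$ for the band $B_i$ between these two heights. Summing over $i = 1, \ldots, k$ and noting that the band $B$ between $h(a_0)$ and $h(a_k)$ is contained in $B_1 \cup \cdots \cup B_k$ (every height in $[\min, \max]$ of the sequence $h(a_0), \ldots, h(a_k)$ is traversed along the walk) yields both desired bounds by triangle-inequality summation.

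I do not expect a significant obstacle. The argument of Section~\ref{sec:upper} uses no special feature of the mapping torus beyond the existence of pleated surfaces and the fact that they sit inside the convex core; both ingredients apply verbatim to the quasi-Fuchsian $N$ via Proposition~\ref{prop:pleating} and Lemma~\ref{Lem:Pleated}. The only minor point worth flagging is that the intermediate vertices $a_1, \ldots, a_{k-1}$ are not required to be shortest in either component of $\bdy \core(N)$, and indeed their individual heights $h(a_i)$ may wiggle non-monotonically between $h(a_0)$ and $h(a_k)$; this is harmless because the area and height estimates for $B$ are proved via the covering $B \subset \bigcup_i B_i$ and the triangle inequality, respectively.
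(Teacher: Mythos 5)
Your proposal is correct and follows essentially the same route as the paper: the paper's proof simply takes a geodesic $a_0,\ldots,a_k$ in $\arc^{(1)}(F,p)$, applies Lemmas~\ref{lemma:height-segment} and~\ref{lemma:area-segment} to each consecutive pair, and sums, exactly as you do. Your additional remarks (pleating via Proposition~\ref{prop:pleating}, the curves $\gamma_i$ from Lemma~\ref{lemma:projection-curve}, and the containment of $B$ in $B_1\cup\cdots\cup B_k$) just make explicit why those fibered-case lemmas transfer verbatim to the quasi-Fuchsian setting, which the paper leaves implicit.
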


\begin{proof}
This follows from the results of Section~\ref{sec:upper}.  Let $a_0,
a_1, \ldots, a_k$ be the vertices of a geodesic in $\arc^{(1)}(F,p)$
between $a_0$ and $a_k$. Then, for every $i$, let $B_i \subset A$ be
the annular band whose boundary consists of horizontal circles at
heights $h(a_{i-1})$ and $h(a_i)$. By
Lemmas~\ref{lemma:height-segment} and~\ref{lemma:area-segment},
\[
\area(B_i) \: \leq \: 9 \, \chi(F)^2
\quad \mbox{ and } \quad
 | h(a_i) - h(a_{i-1})| \: < \: -3 \, \chi(F)  .
\]
Adding up these estimates as $i$ ranges from $1$ to $k$ gives the
result.
\end{proof}

To prove the upper bounds of Theorem~\ref{thm:main-qf}, it remains to
estimate the area and height of the part of $\core(N) \cap A$ that is
\emph{not} contained in the band $B$. To make this region more
precise, define $h_\pm(N)$ to be the vertical coordinate of the circle
$\bdy_\pm \core(N) \cap A$. Then we may define $B_- = B_-(N)$ to be
the band whose boundary consists of horizontal circles at heights
$h_-(N)$ and $h(a_0)$, and similarly $B_+ = B_+(N)$ to be the band
between heights $h(a_k)$ and $h_+(N)$. Note that the orientations of
$a_0$ and $a_k$ were chosen precisely so as to minimize the size of
$B_-(N)$ and $B_+(N)$, respectively.

Recall that $\bdy_- \core(N)$ is an intrinsically hyperbolic surface,
pleated along a lamination. The arc $a_0$ has a geodesic
representative in $\bdy_- \core(N)$; in fact, by definition this
geodesic is shortest in $\bdy_- \core(N)$ among all arcs from $p$ to
$p$. Then $h_-(N)$ is the height at which the geodesic representative
of $a_0$ in $\bdy_- \core(N)$ enters the cusp $C$, and $h(a_0)$ is the
height at which the geodesic representative of $a_0$ in $N$ enters the
cusp $C$. The difference $\abs{h(a_0) - h_-(N)}$ is the height of
$B_-(N)$. We control $\abs{h(a_0) - h_-(N)}$ via the following
proposition.

\begin{prop}\label{prop:drifting-height}
Let $\gamma \subset N$ be an oriented, essential arc from cusp $C$ back to $C$,
which is disjoint from the interior of $C$. Let $g
\subset N$ be the geodesic in the homotopy class of $\gamma$. Let
$h(\gamma)$ be the height at which $\gamma$ enters $C$, and $h(g)$ be
the height at which $g$ enters $C$.

Assume that the orientation of $\gamma$ has been chosen to minimize
$\abs{h(g) - h(\gamma)}$. Then either $\abs{h(g) - h(\gamma)} \leq \sqrt{2}$, or 
\begin{equation}\label{eq:drifting-distance}
\abs{h(g) - h(\gamma)} \: \leq \: \frac{\ell(\gamma) - \ln \left(3 + 2 \sqrt{2} \right)  + 2 \sqrt{2}}{2}
\: = \: \frac{\ell(\gamma)}{2} + 0.5328...,
\end{equation}
where $\ell(\gamma)$ is the arclength of $\gamma$.
\end{prop}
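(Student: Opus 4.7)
The plan is to work in the universal cover $\HH^3$ and use explicit hyperbolic geometry of horoballs. Lift $\gamma$ to $\tilde\gamma$, an arc in $\HH^3$ from $p_0 \in \bdy H_0$ to $p_1 \in \bdy H_1$, where $H_0$ and $H_1$ are two distinct horoball lifts of $C$ (distinctness uses the essentiality of $\gamma$). In the same relative homotopy class, lift $g$ to $\tilde g$, the common perpendicular of $\bdy H_0$ and $\bdy H_1$, of length $d^* := \ell(g)$, with endpoints $q_0, q_1$. Write $s_i := d_{\bdy H_i}(p_i, q_i)$ for the horospheric distances. Use upper half-space coordinates with $H_0$ at infinity, $\bdy H_0 = \{z = 1\}$, $\tilde g$ vertical, and $H_1$ a Euclidean ball of radius $R = e^{-d^*}/2$ tangent to $\bdy\HH^3$.

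The longitude of $C$ lifts to parabolic translations along each horosphere, and the covering maps $\bdy H_i \to \bdy C$ are local isometries identifying the height coordinate with a specific direction on each $\bdy H_i$. Hence $|h(g) - h(\gamma)|$, measured at the forward endpoint, equals the height component of the horospheric displacement from $q_i$ to $p_i$, which is at most $s_i$. Choosing the orientation of $\gamma$ to minimize yields $|h(g) - h(\gamma)| \leq \min(s_0, s_1)$. If $\min(s_0, s_1) \leq \sqrt 2$, the first bound of the proposition follows immediately.

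For the remaining case, assume $s_0, s_1 > \sqrt 2 \geq \sqrt{1+2R}$. Then the shortest path from $p_0$ to $p_1$ staying outside both horoballs is the ``hug-tangent-hug'' path consisting of a horocyclic arc on $\bdy H_0$, the common external tangent geodesic to $\bdy H_0, \bdy H_1$, and a horocyclic arc on $\bdy H_1$. A direct calculation in the half-space model gives its total length as
\[
(s_0 - \sqrt{1+2R}) \: + \: \ln\!\left(\tfrac{(1+R)+\sqrt{1+2R}}{R}\right) \: + \: (s_1 - \sqrt{1+2R}).
\]
Since $\tilde\gamma$ avoids the horoball interiors, $\ell(\gamma)$ is at least this quantity, giving $s_0 + s_1 \leq \ell(\gamma) + F(R)$ where $F(R) := 2\sqrt{1+2R} - \ln(((1+R)+\sqrt{1+2R})/R)$. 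A brief derivative check shows $F$ is strictly increasing on $(0, 1/2]$, so $F(R) \leq F(1/2) = 2\sqrt 2 - \ln(3+2\sqrt 2)$. Combining with $\min(s_0, s_1) \leq (s_0 + s_1)/2$ yields the linear bound $\ell(\gamma)/2 + \sqrt 2 - \ln(\sqrt 2 + 1)$ claimed by the proposition.

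The principal technical hurdle is the three-dimensional reduction: in $\HH^3$ there is a one-parameter family of common external tangent geodesics between $\bdy H_0$ and $\bdy H_1$, parameterized by rotation around the axis of $\tilde g$. One must verify that the shortest constrained path is indeed the planar hug-tangent-hug. The key point is that the horocyclic ``hug'' distances on $\bdy H_0$ and $\bdy H_1$ are separately minimized (each equal to $s_i - \sqrt{1+2R}$) only when the tangent plane contains the horospheric projection of $p_i$ onto the tangent circle. The worst case for the bound on $s_0 + s_1$ occurs when both projections lie in a common vertical plane, which reduces the optimization to the planar calculation above.
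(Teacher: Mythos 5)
Your overall strategy is the same as the paper's: bound $\abs{h(g)-h(\gamma)}$ by $\min(s_0,s_1)\leq\tfrac12(s_0+s_1)$ using the horospherical displacements at the two ends, then bound $s_0+s_1$ by $\ell(\gamma)$ plus a constant coming from the hug--tangent--hug geometry, with the extremal constant occurring when the two horoballs are tangent. Your explicit formulas check out: in the normalization $\bdy H_0=\{z=1\}$, $R=e^{-\ell(g)}/2\leq 1/2$, the tangency offset along each horosphere is indeed $\sqrt{1+2R}$ (on the $H_1$ side this follows from the inversion of radius $\sqrt{2R}$ swapping the two horoballs and preserving the common tangent, a symmetry you use implicitly), the tangent segment has length $\ln\bigl(((1+R)+\sqrt{1+2R})/R\bigr)$, and $F(R)=2\sqrt{1+2R}-\ln\bigl(((1+R)+\sqrt{1+2R})/R\bigr)$ is increasing, so $F(R)\leq F(1/2)=2\sqrt2-\ln(3+2\sqrt2)$. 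This reproduces, in a single $R$-dependent function, exactly the two inequalities of the paper's Lemma~\ref{lemma:tangent-length}, each of which is extremal at tangency.

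The genuine gap is the three-dimensional reduction, which you correctly identify as the principal hurdle but do not actually carry out. Your justification --- that the hug distances are ``separately minimized'' when the tangent plane contains the horospheric projection of $p_i$, so the worst case is coplanar --- presupposes that the shortest path in $\HH^3\setminus(\mathring H_0\cup\mathring H_1)$ already has the form (arc on $\bdy H_0$) $+$ (single common tangent geodesic) $+$ (arc on $\bdy H_1$), which is precisely what needs to be proved: in $\HH^3$ the boundary portions of a taut path are intrinsic geodesics of the horospheres and the tangent segments need not be coplanar with the axis $\widetilde{g}$, so the family of competitors is strictly larger than the one your optimization considers. The clean fix is the paper's device: introduce cylindrical coordinates $(r,\theta,z)$ about $\widetilde{g}$, in which the metric $dr^2+\sinh^2(r)\,d\theta^2+\cosh^2(r)\,dz^2$ is diagonal, and apply the $1$--Lipschitz map $(r,\theta,z)\mapsto(r,0,z)$. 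Since both horoballs are rotationally symmetric about $\widetilde{g}$, the image path still avoids their interiors, and since rotation about $\widetilde{g}$ fixes $q_i$ and preserves $\bdy H_i$, the quantities $s_0,s_1$ are unchanged; this places you in the planar, same-side configuration where your computation applies. (Your remaining informality --- asserting that the planar shortest avoiding path is the hug--tangent--hug path when $s_i>\sqrt{1+2R}$ --- is at the same level of rigor as the paper's own treatment, so I do not count it against you.) With the projection step supplied, your argument is correct and gives the stated constants.
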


\begin{proof}
Lift $\gamma$ to an arc $\widetilde{\gamma} \subset \HH^3$. The
oriented arc $\widetilde{\gamma}$ runs from horoball $H'$ to horoball
$H$. Let $\widetilde{g}$ be the corresponding lift of $g$, namely the
oriented geodesic from $H'$ to $H$.  Let $d_+ = d_+(g,\gamma)$ be the
distance along $\bdy H$ between the endpoints of $\widetilde{g}$ and
$\widetilde{\gamma}$ on $H$, and similarly let $d_- = d_-(g,\gamma)$
be the distance along $\bdy H'$ between the endpoints of
$\widetilde{g}$ and $\widetilde{\gamma}$ on $H'$. Since the
orientation of $\gamma$ has been chosen to minimize $\abs{h(g) -
  h(\gamma)}$, we have
\begin{equation}
\label{eq:h-gamma}
\abs{h(g) - h(\gamma)} \: \leq \: \min\{ d_-(g,\gamma), \, d_+(g,\gamma) \} 
                       \: \leq \: \frac{ d_- + d_+}{2}.
\end{equation}
Thus, to bound $\abs{h(g) - h(\gamma)}$, it will suffice to bound the
average of $d_-$ and $d_+$.

Next, we reduce the problem from three to two dimensions, as follows.
Consider cylindrical coordinates $(r, \theta, z)$ on $\HH^3$, with the
geodesic $\widetilde{g}$ at the core of the cylinder.  Thus $r$
measures distance from $\widetilde{g}$, while $\theta$ is the
rotational parameter, and $z$ measures distance along $\widetilde{g}$.
With these coordinates, the hyperbolic metric becomes
\begin{equation}\label{eq:cylindrical}
ds^2 \: = \: dr^2 + \sinh^2(r) \, d\theta^2 + \cosh^2(r) \, dz^2.
\end{equation}

We claim that no generality is lost by assuming $\widetilde{\gamma}$
lies in the half-plane corresponding to $\theta = 0$. This is because
the expression for the metric in \eqref{eq:cylindrical} is
diagonalized, hence the map $(r,\theta,z) \mapsto (r, 0,z)$ is
distance--decreasing. Thus replacing $\widetilde{\gamma}$ by its image
in this half-plane only makes it shorter. Furthermore, horoballs $H$
and $H'$ are rotationally symmetric about $\widetilde{g}$, hence the
new planar curve is still disjoint from their interiors. Finally,
observe that rotation about $\widetilde{g}$ keeps the endpoints of
$\widetilde{\gamma}$ a constant distance from $\widetilde{g} \cap H'$
and $\widetilde{g} \cap H$, respectively. Thus the quantities
$d_-(g,\gamma)$ and $d_+(g, \gamma)$ are unchanged when we replace
$\widetilde{\gamma}$ by its rotated image.  From now on, we will
assume that $\widetilde{\gamma}$ lies in the copy of $\HH^2$ for which
$\theta \in \{0, \pi\}$.

Recall that $\widetilde{\gamma}$ is disjoint from the interiors of $H$
and $H'$. When its endpoints are sufficiently far apart from $\widetilde{g}$, the geodesic between those points would pass through the interiors of the horoballs. Instead, the shortest path that stays outside $H$ and $H'$  follows the boundary of $H'$, then tracks a
hyperbolic geodesic tangent to $H'$ and $H$, then follows the boundary
of $H$. (See Figure~\ref{fig:drifting}.) The following lemma estimates
the length of the geodesic segment in the middle of
this path.

\begin{figure}
\begin{overpic}{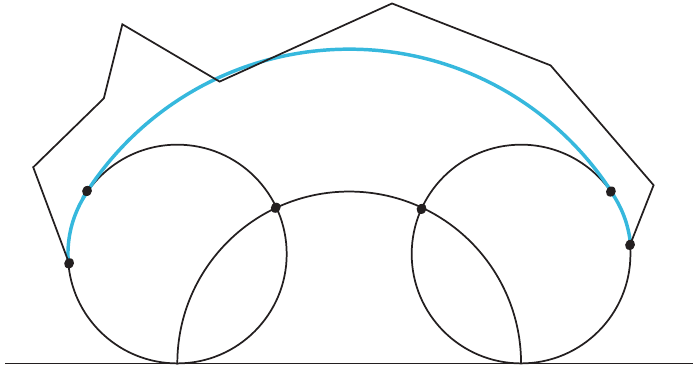}
\put(44,19.5){$\widetilde{g}  \subset \beta$}
\put(49,41){$\ell_1$}
\put(32,32){$\ell_2$}
\put(62,32){$\ell_2$}
\put(12,20){$\ell_0 - \ell_2$}
\put(73,20){$\ell_3 - \ell_2$}
\put(75,47){$\widetilde{\gamma}$}
\put(39,4){$H'$}
\put(87.5,4){$H$}
\end{overpic}
\caption{The setup of Proposition~\ref{prop:drifting-height}.  When the endpoints of $\widetilde{\gamma}$ are sufficiently far apart, the
shortest path that stays
disjoint from the interiors of $H$ and $H'$ is the three-piece blue
arc, of length $(\ell_0 - \ell_2) + \ell_1 + (\ell_3-\ell_2)$.}
\label{fig:drifting}
\end{figure}

\begin{figure} [b]
\begin{overpic}{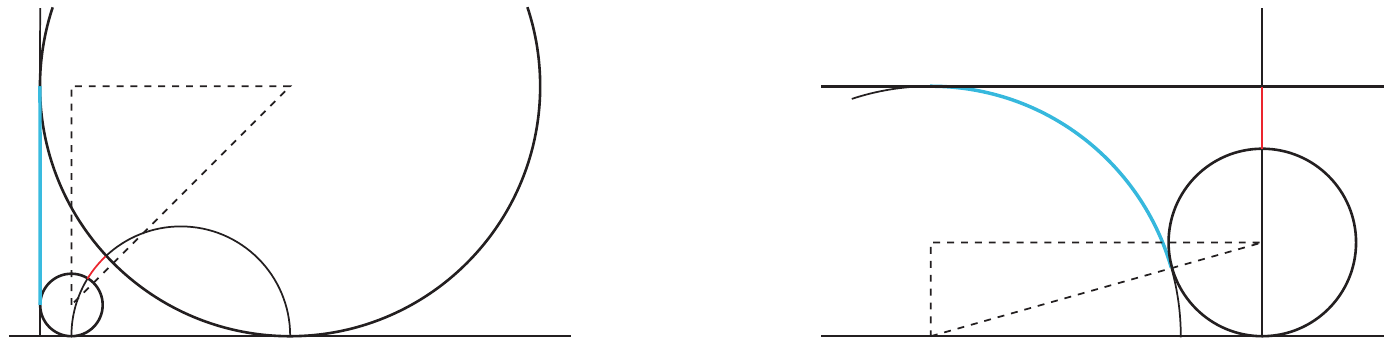}
\put(7,1){$H'$}
\put(39,20){$H$}
\put(0.5,22){$\alpha$}
\put(0.5,10){$\ell_1$}
\put(14,6){$\beta$}
\put(5.5,12){$1-r$}
\put(16,12){$\geq1+r$}
\put(10,19.5){$1-r$}
\put(96,19.5){$H$}
\put(96,13){$H'$}
\put(80,20){$\ell_2$}
\put(75,14){$\alpha$}
\put(88.8,15.5){$\beta$}
\put(73,4.3){$1+r$}
\put(65,4){$r$}
\put(75,9){$\ell_2$}
\end{overpic}
\caption{The setup of Lemma~\ref{lemma:tangent-length}. Left: making
geodesic $\alpha$ vertical helps estimate $\ell_1$. Right: making
geodesic $\beta$ vertical helps estimate $\ell_2$. In both panels, the
horoball $H'$ has Euclidean radius $r$.}
\label{fig:pythagorean}
\end{figure}

\begin{lemma}
\label{lemma:tangent-length}
Let $H$ and $H'$ be horoballs in $\HH^2$ with disjoint interiors. Let
$\alpha \subset \HH^2$ be a hyperbolic geodesic such that $H$ and $H'$
are both tangent to $\alpha$, on the same side of $\alpha$. Let
$\beta\subset \HH^2$ be a hyperbolic geodesic perpendicular to both
$H$ and $H'$. If $\ell_1$ denotes the length along $\alpha$ from $H
\cap \alpha$ to $H' \cap \alpha$, and $\ell_2$ denotes the length
along $\bdy H$ from $H \cap \alpha$ to $H \cap \beta$, then
\[
\ell_1 \: \geq \: \ln \left( 3 + 2 \sqrt{2} \right) \qquad \mbox{and}
\qquad \ell_2 \: \leq \: \sqrt{2}.
\]
Each inequality becomes equal if and only if $H$ is tangent to $H'$.
\end{lemma}


\begin{proof}
Let $g$ be the geodesic segment of $\alpha$ whose length is $\ell_1$.
For the first inequality, apply an isometry of $\HH^2$ so that
$g \subset \alpha$ is vertical in the upper half-plane model, so that $H$ is the
larger horoball, and so that the Euclidean radius of $H$ is $1$. Then
the point of tangency $\alpha \cap H$ is at Euclidean height $1$.
(See Figure~\ref{fig:pythagorean}, left.) A calculation with the
Pythagorean theorem then implies that the Euclidean radius of $H'$
must be $r \leq 1/(3 + 2 \sqrt{2} )$, with equality if and only if $H$
is tangent to $H'$. Since one endpoint of $g$ is at height $1$ and the
other endpoint is at height $r$, we have $\ell_1 = \ln(1/r) \geq \ln(3
+ 2 \sqrt{2} )$.

For the second inequality, apply an isometry of $\HH^2$ so that $\bdy
H$ is a horizontal line at Euclidean height $1$. Then $\alpha$ is a
Euclidean semicircle of radius $1$, and the horoball $H'$ must have
Euclidean radius $r \leq 1/2$. (See Figure~\ref{fig:pythagorean},
right.) Again, a calculation with the Pythagorean theorem implies that
$\ell_2 \leq \sqrt{2}$, with equality if and only if $r= 1/2$.
\end{proof}

Returning to the proof of Proposition~\ref{prop:drifting-height}, we
import the notation of Lemma~\ref{lemma:tangent-length}. That is: the
geodesic $\alpha$ is tangent to $H$ and $H'$, while the geodesic
$\beta$ contains $\widetilde{g}$. Let $\ell_1$ and $\ell_2$ be as in
the lemma. Let $\ell_0 = d_-(g,\gamma)$ be the distance along $\bdy
H'$ between the $\widetilde{g}\cap \bdy H'$ and $\widetilde{\gamma}
\cap \bdy H'$, and let $\ell_3 = d_+(g,\gamma)$.

If one of $\ell_0 = d_-$ or $\ell_3 = d_+$ is no longer than
$\sqrt{2}$, then equation \eqref{eq:h-gamma} gives $\abs{h(g) -
  h(\gamma)} \leq \sqrt{2}$ as well, and the proof is
complete. Otherwise, if $\ell_0$ and $\ell_3$ are both longer than
$\sqrt{2}$, then Lemma~\ref{lemma:tangent-length} implies that they
are longer than $\ell_2$. As a consequence, the shortest path between
the endpoints of $\widetilde{\gamma}$ that stays disjoint from $H$ and
$H'$ will need to track horoball $H'$ for distance $(\ell_0 -
\ell_2)$, then follow geodesic $\alpha$ for distance $\ell_1$, then
track horoball $H$ for distance $(\ell_3 - \ell_2)$. See
Figure~\ref{fig:drifting}.

 Thus we may compute:
$$
\begin{array}{r c l c l}
\ell(\gamma) & \geq & (\ell_0 - \ell_2) + \ell_1 + (\ell_3 - \ell_2) & & \mbox{by construction of $\widetilde{\gamma}$} \\
& = & (d_- + d_+)  - 2 \ell_2 + \ell_1 & & \mbox{by the definition of $\ell_0$ and $\ell_3$} \\
& \geq & (d_- + d_+)  - 2 \sqrt{2} +  \ln (3 + 2 \sqrt{2}) & & \mbox{by Lemma~\ref{lemma:tangent-length},} \\
& \geq & 2  \abs{h(g) - h(\gamma)}  - 2 \sqrt{2} +  \ln (3 + 2 \sqrt{2}) & & \mbox{by \eqref{eq:h-gamma},} \end{array}
$$
implying \eqref{eq:drifting-distance}.
\end{proof}

We can now prove the upper bounds of Theorem~\ref{thm:main-qf}.

\begin{proof}[Proof of Theorem~\ref{thm:main-qf}, upper bound]
Let $C \subset N$ be a maximal cusp corresponding to the puncture $p$
of $F$. As above, $\bdy C \cap \core(N)$ decomposes into three compact
annular bands: the band $B_-$ between heights $h_-(N)$ and $h(a_0)$,
the band $B$ between heights $h(a_0)$ and $h(a_k)$, and the band $B_+$
between heights $h(a_k)$ and $h_+(N)$.

The area and height of $B$ were bounded in Lemma
\ref{lemma:upper-qf-start}. As for $B_-$, let $a_0 \in \Delta_-(N)$ be
one of the arcs from $C$ to $C$ that is shortest on $\bdy_-
\core(N)$. Let $\gamma$ be the geodesic in $\bdy_-\core(N)$ in the
homotopy class of $a_0$. By Lemmas~\ref{lemma:length-bound} and
\ref{lemma:pleated-cusp-size},
\begin{equation}\label{eq:gamma-length}
\ell(\gamma) \: \leq \: 2 \ln \abs{ 6 \chi (F) \, / \, 2 ^{1/4 }} \: =
\: 2 \ln \abs{ \chi(F) } + \ln  \left( 18 \sqrt{2}  \right).
\end{equation}
Note that $\ln  \left( 18 \sqrt{2}  \right) \approx 3.2369 > \sqrt{2}$, hence the larger upper bound in Proposition~\ref{prop:drifting-height} is the one in equation \eqref{eq:drifting-distance}.
Thus, by plugging estimate \eqref{eq:gamma-length} into \eqref{eq:drifting-distance}, we obtain
\[
\height(B_-) \: = \: \abs{ h(a_0) - h(\gamma) } \: \leq \: \ln \abs{
  \chi(F) } + \frac{  \ln  \left( 18 \, \sqrt{2} \right)  }{2} + 0.54
\: < \: \ln \abs{ \chi(F) } + 2.16.
\]
By Lemma~\ref{lemma:horocycle-length}, the circumference of $B_-$
(which is a longitude of the cusp $C$) satisfies $\lambda \leq - 6
\chi(F)$. Thus
\[
\area(B_-) \: = \: \lambda \cdot \height(B_-) 
  \: < \: \big| 6 \chi(F) \ln \abs{ \chi(F) } + 13 \chi(F) \big|.
\]
The top band $B_+ = B_+(N)$ satisfies the same estimates. 

Combining these estimates with Lemma~\ref{lemma:upper-qf-start}, we
obtain
\[
\area(\bdy C \cap \core(N)) \: = \:  \area(B_- \cup B \cup B_+)  \: <
\: 9 \chi(F)^2 \, d_\arc(N,p) + \Big| 12 \chi(F)   \ln \abs{ \chi(F) }
+ 26 \chi(F) \Big|.
\]
Similarly,
\[
\height(\bdy C \cap \core(N)) \: = \:  \height(B_- \cup B  \cup B_+)
\: < \:  -3 \, \chi(F) \, d_\arc(N, p) + 2  \ln \abs{ \chi(F) } + 5,
\]
completing the proof.
\end{proof}


\section{Sweepouts}
\label{sec:sweepouts}

In this section, we describe an important geometric and topological
construction needed for the lower bounds in Theorems~\ref{thm:main}
and~\ref{thm:main-qf}.

\begin{define}
\label{def:sweepout}
Let $N$ be a hyperbolic $3$--manifold, $F$ a surface, and $f_0 \from F
\to N$ a map sending punctures to cusps.  Fix a connected set $J
\subset \RR$.  A {\em sweepout through $f_0$} is a map $\Psi \from F
\times J \to N$, thought of as a one-parameter family of maps $\Psi_t
\from F \to N$, each homotopic to $f_0$.

A sweepout $\Psi$ is called \emph{geometric} if every $\Psi_t$ is a
\emph{simplicial hyperbolic map}: that is, for every $t \in J$, the
image $F_t = \Psi_t(F)$ is a hyperbolic cone surface with at most one
cone point of angle $2\pi \leq \theta_t < 4\pi$. Note that a pleating
map along an ideal triangulation is a special case of a simplicial
hyperbolic map.

Let $g_t$ be the hyperbolic cone metric on $F$ induced by $\Psi_t$.
Then the continuity of $\Psi$ implies that $g_t$ varies continuously
with $t$. In particular, the lengths of geodesic realizations of
homotopy classes of arcs and curves (with respect to $g_t$) vary
continuously with $t$.
\end{define}

\begin{define}
\label{def:equivariant-sweep}
Let $M = M_\psi$ be a fibered hyperbolic $3$--manifold with fiber $F$
and monodromy $\psi$.  Let $N = N_\psi$ be the infinite cyclic cover
of $M$, with deck transformation $Z \from N \to N$.  Fix $r > 0$ and
define $z \from F \cross \RR \to F \cross \RR$ by $z(x, t) = (\psi(x),
t + r)$.  We say a sweepout $\Psi \from F \times \RR \to N$ is {\em
  equivariant} if each $\Psi_t$ is properly homotopic to the fiber,
and
\[
Z \circ \Psi = \Psi \circ z. 
\]
\end{define}

Note that equivariance implies that $\Psi$ descends to a degree-one
sweepout of $M$. 

\begin{prop}
\label{prop:sweepout}
Let $M = M_\psi$ be a fibered hyperbolic $3$--manifold with fiber $F$
and monodromy $\psi$.  Let $N = N_\psi$ be the infinite cyclic cover.
Then there is a geometric, equivariant sweepout $\Psi \from F \times
\RR \to N$.
\end{prop}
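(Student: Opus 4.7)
The plan is to build $\Psi$ by concatenating a finite family of elementary interpolations between pleated surfaces realizing ideal triangulations related by edge flips, then extending by equivariance. This construction is due to Thurston~\cite{thurston:notes} and was worked out in detail by Canary~\cite{canary:covering}.

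First I observe that it suffices to build a geometric sweepout on $F \times [0, r]$ whose endpoints satisfy $\Psi_r = Z \circ \Psi_0 \circ \psi^{-1}$; equivariance then extends $\Psi$ to all of $F \times \RR$ by iterating the rule $\Psi(\psi(x), t+r) = Z(\Psi(x, t))$, and the extension is continuous at integer multiples of $r$ by the endpoint condition. Fix an ideal triangulation $\tau_0$ of $F$ and, via Proposition~\ref{prop:pleating}, a pleating map $f_0 \from F \to N$ realizing $\tau_0$ in the proper homotopy class of the fiber. Since $Z$ is an isometry, $Z \circ f_0 \circ \psi^{-1}$ is itself a pleating map realizing $\psi(\tau_0)$; a direct $\pi_1$-computation, using that $Z_*$ acts on $\pi_1(N) \cong \pi_1(F)$ as $\psi_*$, shows it lies in the same proper homotopy class as $f_0$. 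Using Hatcher's theorem that the flip graph of ideal triangulations is connected~\cite{hatcher:triangulations}, choose a finite sequence $\tau_0, \tau_1, \ldots, \tau_n = \psi(\tau_0)$ with consecutive terms differing by a single edge flip, pick pleating maps $f_1, \ldots, f_{n-1}$ realizing them in the common homotopy class of $f_0$, and set $f_n := Z \circ f_0 \circ \psi^{-1}$.

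To interpolate between consecutive pleatings $f_i$ and $f_{i+1}$, I use the one-vertex move. Let $Q$ be the quadrilateral of $\tau_i \cap \tau_{i+1}$ whose two diagonals are the edges that differ, and introduce an auxiliary material vertex $v$ in the interior of $Q$, subdividing $Q$ into four triangles meeting at $v$. Initially place $v$ on $f_i(F)$, so that the refined triangulation is realized by $f_i$ with cone angle $2\pi$ at $v$. Now move $v$ continuously through $N$ in three stages: push $v$ transversely off $f_i(F)$ to break coplanarity, pivot it across $Q$ so that the two diagonals trade roles, and finally retract it onto $f_{i+1}(F)$. At each intermediate time, replace each of the four triangles adjacent to $v$ by the totally geodesic triangle in $\HH^3$ with the prescribed vertices; the resulting map $F \to N$ is a simplicial hyperbolic surface with at most a single cone point, namely $v$. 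Keeping $v$ in generic position, the cone angle at $v$ stays in $[2\pi, 4\pi)$ throughout. Concatenating these $n$ interpolations and reparametrizing linearly yields the desired sweepout on $F \times [0, r]$.

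The principal technical obstacle is the cone angle bound during the one-vertex pivot: one must verify that the path of $v$ can be chosen so that the cone angle at $v$ is at least $2\pi$ throughout (strictly exceeding $2\pi$ whenever $v$ is off the pleated surface, so that flips are actually achievable) and strictly less than $4\pi$. This requires a careful analysis of how the interior angle at a vertex of a totally geodesic triangle in $\HH^3$ varies with the position of that vertex; the detailed verification appears in~\cite{canary:covering}.
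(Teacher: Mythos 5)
Your overall architecture matches the paper's: a $\psi$--equivariant Hatcher flip sequence of ideal triangulations, pleated surfaces realizing each triangulation via Proposition~\ref{prop:pleating}, chosen compatibly with the deck transformation, and a one--cone--point simplicial hyperbolic interpolation across each flip, with the reduction to $F\times[0,r]$ and the endpoint condition $\Psi_r = Z\circ\Psi_0\circ\psi^{-1}$ handled correctly. The gap is at the step you yourself flag as the principal obstacle: you never actually produce a path of the cone point along which $2\pi \le \theta_t < 4\pi$, and the path you sketch does not ensure it. ``Generic position'' is not the relevant condition: the cone angle at $v$ is at least $2\pi$ exactly when the coned quadrilateral is not contained in a closed half-space through $v$ (this is Canary's Lemma 4.2 in \cite{canary:covering}), and that condition can genuinely fail along the motion you describe. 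In particular, your first stage --- pushing $v$ ``transversely off $f_i(F)$ to break coplanarity'' --- places $v$ outside the ideal tetrahedron $\Delta_i$ if done to the wrong side, and then all four ideal vertices lie on one side of a geodesic plane through $v$, so the coned surface is a tent with cone angle strictly \emph{less} than $2\pi$. Citing \cite{canary:covering} for ``the detailed verification'' does not close this: the lemma only converts the half-space condition into the angle bound; you must still exhibit a path along which that condition holds, and whose endpoints make the coned surface coincide exactly with the two pleated surfaces --- which forces the endpoints to lie on the old and new diagonals themselves, not merely somewhere on $f_i(F)$ or in the interior of $Q$ (if $v$ is off the diagonal, two of the four coned triangles cut across the bending locus and the initial surface is not $f_i(F)$).

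The paper's proof supplies exactly this missing choice. The exchanged edges $\eps_{i-1}$ and $e_i$ lift to geodesics with no shared ideal endpoints, so they admit a unique common perpendicular segment $\gamma$ (possibly of length zero). Taking the cone point $x_t$ to move linearly along $\gamma$ from $\gamma\cap\eps_{i-1}$ to $\gamma\cap e_i$ keeps it between the two edges, inside $\Delta_i$, so the coned quadrilateral $Q_t$ is never contained in a half-space through $x_t$ and Canary's lemma gives $\theta_t\ge 2\pi$; the bound $\theta_t<4\pi$ is then immediate because each of the four $(2/3)$--ideal triangles meeting at $x_t$ has interior angle less than $\pi$; and at the two endpoint times the construction reproduces $F_{i-1}$ and $F_i$ exactly because the endpoints of $\gamma$ lie on the respective diagonals. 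With this replacement for your three-stage pivot, the rest of your argument goes through as written.
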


This result originates in the work of Thurston~\cite[Theorem
9.5.13]{thurston:notes}; see page 9.47 in particular. A careful
account of the proof was also written down by Canary~\cite[Sections
4--5]{canary:covering}. What follows below is a review of their
argument, adapted to ideal triangulations.

\begin{proof}[Proof of Proposition~\ref{prop:sweepout}]
Let $\tau_0$ be an ideal triangulation of $F$.  Hatcher
proved~\cite{hatcher:triangulations} that the triangulations $\tau_0$
and $\psi(\tau_0)$ can be connected by a sequence of diagonal
exchanges, as in Figure~\ref{fig:diagonal-exchange}.  Thus we have a
sequence of ideal triangulations, $\tau_0, \tau_1, \ldots, \tau_r =
\psi(\tau_0)$, where each $\tau_i$ differs from $\tau_{i-1}$ by a
diagonal exchange.  We extend the sequence of triangulations $\tau_i$
to a bi-infinite sequence $\{\tau_i \st i \in \ZZ \}$, such that
$\tau_{i+r} = \psi(\tau_i)$.

\begin{figure}[h]
\begin{overpic}{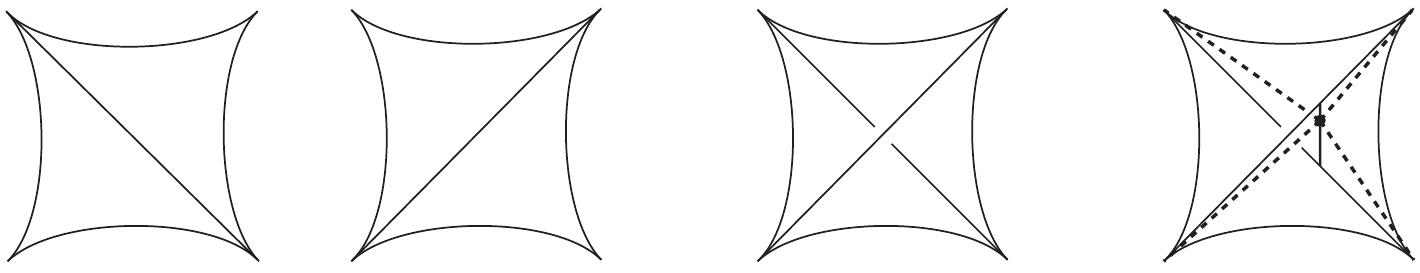}
\put(9.5,10){$\eps_{i-1}$}
\put(31,10){$e_i$}
\put(21,9){$\Rightarrow$}
\put(94,10){$x_t$}
\end{overpic}
\caption{Left: a diagonal exchange in a quadrilateral. Center right: a
diagonal exchange between two pleated surfaces creates a
$3$--dimensional tetrahedron $\Delta_i$. Right: a singular
quadrilateral $Q_t$ interpolates between the top and bottom pleated
sides of the tetrahedron.}
\label{fig:diagonal-exchange}
\end{figure}

Fix an embedding $f_0 \from F \to N$ isotopic to the fiber.  By
Proposition~\ref{prop:pleating}, for each $i \in \ZZ$ we may take
$\Psi_i \from F \to N$ to be a pleating of $f_0$ along $\tau_i$.  We
choose these pleating maps so that $Z \circ \Psi_i = \Psi_{i+r} \circ
\psi$.  Define $F_i = \Image(\Psi_i)$ and notice that, since $\tau_i$
differs from $\tau_{i-1}$ by a diagonal exchange, $F_i$ differs from
$F_{i-1}$ by an ideal tetrahedron $\Delta_i$.

Fix $i \in \ZZ$.  Let $\eps_{i-1}$ be the edge of $\tau_{i-1}$ that is
exchanged for the edge $e_i$ of $\tau_i$. Because all six edges of the
tetrahedron $\Delta_i$ lift to hyperbolic geodesics, the edges
$\eps_{i-1}$ and $e_i$ lift to hyperbolic geodesics with no shared
endpoints at infinity. In $\HH^3$, this pair of geodesics is joined by
a unique geodesic segment $\gamma$ that meets $\eps_{i-1}, e_i$
perpendicularly. (In the special case where $\eps_{i-1}$ and $e_i$
intersect, $\gamma$ has length $0$.)

Now, for every $t \in [i-1,i]$, let $x_t$ be the point on
$\gamma$ that is distance $\ell(\gamma)(t-i+1)$ from $\eps_{i-1}$ and
distance $\ell(\gamma)(i-t)$ from $e_i$. In other words, $x_t$ is
obtained by linear interpolation between the points where $\gamma$
meets $\eps_{i-1}$ and $e_i$. We construct a (singular) ideal
quadrilateral $Q_t$ by coning $x_t$ to the four edges of $\Delta_i
\setminus ( e_i \cup \eps_{i-1} ) $. (See Figure
\ref{fig:diagonal-exchange}, right.) Finally, let $F_t$ be the surface
that includes the quadrilateral $Q_t$ inside the tetrahedron
$\Delta_i$, and agrees with $F_i$ everywhere else. Recall that
$F_{i-1}$ agrees with $F_i$ outside $\Delta_i$.

For $t \in (i-1,i)$, we take $\Psi_t \from F \to N_\psi$ to be a
simplicial hyperbolic map with image $F_t$.  We choose the maps
$\Psi_t$ so that $\Psi|_{F \cross [i-1,i]}$ is continuous and so that
$Z \circ \Psi_t = \Psi_{t+r} \circ \psi$.

Now consider the geometry of $F_t = \Psi_t(F)$. Wherever this surface
agrees with $F_i$, it is built out of ideal triangles, and inherits an
intrinsically hyperbolic metric from $N_\psi$. Meanwhile, the
quadrilateral $Q_t$ where $F_t$ disagrees with $F_i$ is constructed
out of four $(2/3)$--ideal triangles that share a vertex at
$x_t$. Thus the surface $F_t$ has a smooth hyperbolic metric
everywhere except at $x_t$. At this cone point, observe that the
singular quadrilateral $Q_t$ is not contained in any hyperbolic
half-space through $x_t$. As a result, a lemma of Canary~\cite[Lemma
4.2]{canary:covering} implies that the cone angle at $x_t$ is
$\theta_t \geq 2\pi$. Also, because each of the four triangles meeting
at $x_t$ has an interior angle less than $\pi$, we have $\theta_t <
4\pi$.

Since the triangulations $\tau_i$ satisfy $\tau_{i+r} =
\psi(\tau_i)$, we have
\[
Z \circ \Psi_{t} = \Psi_{t+r} \circ \psi, 
    \quad \mbox{for all $t \in \RR$.} 
\]
Therefore, $\Psi$ is the desired equivariant, geometric sweepout of
$N$. 
\end{proof}

The above construction extends nicely to quasi-Fuchsian manifolds.

\begin{prop}\label{prop:qf-sweepout}
Let $N \cong F \times \RR$ be a cusped quasi-Fuchsian
$3$--manifold. Let $\tau, \tau'$ be ideal triangulations of $F$. Then
there exists a geometric sweepout $\Psi \from F \times [0, r] \to N,$
such that $\Psi_0$ is the pleating map along $\tau$ and $\Psi_r$ is
the pleating map along $\tau'$.
\end{prop}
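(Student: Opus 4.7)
The plan is to mimic the construction of Proposition~\ref{prop:sweepout}, dropping the equivariance requirement but otherwise running the same argument over a finite interval. Since $N$ is a cusped orientable hyperbolic $3$--manifold, and since $F \hookrightarrow N$ is essential and sends punctures to cusps, Proposition~\ref{prop:pleating} applies verbatim to $N$.

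First I would invoke Hatcher's theorem~\cite{hatcher:triangulations} to obtain a finite sequence of ideal triangulations
\[
\tau = \tau_0, \tau_1, \ldots, \tau_r = \tau',
\]
where each $\tau_i$ differs from $\tau_{i-1}$ by a single diagonal exchange in some quadrilateral. Next, I would apply Proposition~\ref{prop:pleating} to each $\tau_i$, obtaining a pleating map $\Psi_i \from F \to N$ in the proper homotopy class of the fiber inclusion. Set $F_i = \Image(\Psi_i)$; consecutive pleated surfaces $F_{i-1}$ and $F_i$ differ exactly on an ideal tetrahedron $\Delta_i \subset N$ whose two pairs of opposite edges are the exchanged edges $\eps_{i-1} \subset \tau_{i-1}$ and $e_i \subset \tau_i$.

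The heart of the argument is the interpolation on each subinterval $[i-1, i]$, which I would copy directly from the proof of Proposition~\ref{prop:sweepout}. Lift $\eps_{i-1}$ and $e_i$ to disjoint geodesics in $\HH^3$, take their common perpendicular $\gamma$, and for $t \in [i-1,i]$ let $x_t$ be the point on $\gamma$ obtained by linear interpolation between $\gamma \cap \eps_{i-1}$ and $\gamma \cap e_i$. Cone $x_t$ to the four boundary edges of $\Delta_i \setminus (\eps_{i-1} \cup e_i)$ to produce a singular ideal quadrilateral $Q_t$, and let $F_t$ be the surface obtained by replacing $Q_i \subset F_i$ with $Q_t$. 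Choose continuously varying simplicial hyperbolic maps $\Psi_t \from F \to N$ with image $F_t$, agreeing with $\Psi_{i-1}$ and $\Psi_i$ at the endpoints. Canary's lemma~\cite[Lemma 4.2]{canary:covering} ensures that the cone angle $\theta_t$ at $x_t$ satisfies $\theta_t \geq 2\pi$, and the bound $\theta_t < 4\pi$ follows from the fact that each of the four triangles meeting $x_t$ has interior angle less than $\pi$.

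Concatenating these interpolations over $i = 1, \ldots, r$ yields the desired geometric sweepout $\Psi \from F \times [0, r] \to N$, with $\Psi_0$ and $\Psi_r$ the prescribed pleating maps. The only conceptual obstacle one might worry about is whether Proposition~\ref{prop:pleating} is truly applicable in the quasi-Fuchsian setting (as opposed to the finite volume case), but since its proof in~\cite{ceg:notes-on-notes} only requires that $N$ be a cusped hyperbolic $3$--manifold and $F$ be essential with punctures mapped to cusps, this is not an issue. Everything else is a routine adaptation of the fibered case, and in fact slightly simpler since no equivariance condition must be enforced.
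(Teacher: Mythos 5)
Your proposal is correct and follows essentially the same route as the paper, which itself proves this proposition by repeating the construction of Proposition~\ref{prop:sweepout} verbatim, dropping only the equivariance condition; your expanded account of the Hatcher flip sequence, the pleating maps from Proposition~\ref{prop:pleating}, the coning interpolation across each tetrahedron, and Canary's cone-angle lemma matches the paper's argument in every essential respect. (Two harmless slips: the exchanged edges $\eps_{i-1}$ and $e_i$ form \emph{one} pair of opposite edges of $\Delta_i$, not two, and their geodesic lifts need not be disjoint --- they merely share no endpoints at infinity, with $\gamma$ degenerating to a point if they cross.)
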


\begin{proof}
We repeat the proof of Proposition~\ref{prop:sweepout}, without
needing to worry about equivariance. Let $\tau = \tau_0, \tau_1,
\ldots, \tau_r = \tau'$ be a sequence of ideal triangulations of $F$,
with each $\tau_i$ differing from $\tau_{i-1}$ by a diagonal
exchange. Then each $\tau_i$ can be realized by a pleated surface
$F_i$, and we may interpolate from $F_{i-1}$ to $F_i$ by a
$1$--parameter family of simplicial hyperbolic surfaces, as in Figure
\ref{fig:diagonal-exchange}.
\end{proof}

\begin{lemma}
\label{lemma:cuff}
Let $\Psi \from F \times [0, r] \to N$ be a geometric sweepout in a
hyperbolic $3$--manifold $N$.  Let $C$ be an embedded horocusp in $N$,
with longitude of length $\lambda$. Then, for every surface $F_t =
\Psi_t(F)$ in the sweepout, $C \cap F_t$ contains an equidistant cusp
neighborhood whose area is at least $\lambda$.
\end{lemma}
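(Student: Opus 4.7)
The plan is to mimic the proof of Lemma~\ref{lemma:pleated-cusp}, replacing the pleated structure with the simplicial hyperbolic structure of $F_t$ and using Lemma~\ref{lemma:expansion-rate} to handle the growth of cusp areas in the (possibly singular) cone surface. The key point is that essentially the same three--step argument---identify a small standard horospherical piece, bound its length from below, then expand it---works even when $F_t$ carries a single cone point of angle in $[2\pi, 4\pi)$.

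First I would fix $t \in [0,r]$ and let $p$ be a puncture of $F$ that $\Psi_t$ sends into $C$. Because $\Psi_t$ is a simplicial hyperbolic map, the image $F_t$ is built from totally geodesic ideal triangles in $N$, together with at most a single bouquet of $(2/3)$--ideal triangles meeting at one cone point $x_t$. Choose a horocusp $C_0 \subset C$ small enough that $C_0$ is disjoint from the image of $x_t$, and that $\Psi_t^{-1}(C_0)$ is a disjoint union of tips of ideal triangles, each a horospherical cusp neighborhood of a puncture of $F$ mapping to $C$. Let $R_0 \subset F_t$ be the component containing a neighborhood of $p$; then the universal cover of $R_0$ is isometric to a horoball in $\HH^2$, and $\bdy R_0$ is a piecewise horocycle on $\bdy C_0$.

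Next, let $d$ denote the distance in $N$ from $\bdy C_0$ to $\bdy C$. A horocycle about $p$ on $\bdy C_0$ has length $e^{-d}\lambda$, and since $\bdy R_0$ projects to a piecewise geodesic homotopic to this horocycle in the Euclidean metric on $\bdy C_0$, exactly the argument of Lemma~\ref{lemma:pleated-cusp} yields $\ell(\bdy R_0) \geq e^{-d}\lambda$, and hence $\area(R_0) = \ell(\bdy R_0) \geq e^{-d}\lambda$. Now let $R$ be the closed $d$--neighborhood of $R_0$ in the intrinsic cone metric on $F_t$. I would verify that $R$ is an embedded equidistant cusp neighborhood in the sense of Definition~\ref{def:equidistant-cusp}: because $\Psi_t$ is length non-increasing, any essential arc in $F_t$ from $R_0$ to $R_0$ maps to an essential arc in $N$ from $\bdy C_0$ back to $\bdy C_0$, whose length is at least $2d$ since $C$ is embedded and contains $C_0$. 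Thus the $d$--neighborhood $R$ is embedded and satisfies conditions (1)--(3) of Definition~\ref{def:equidistant-cusp} with $Q = R_0$. Finally, Lemma~\ref{lemma:expansion-rate} applied to $R_0 \subset R$ in the simplicial hyperbolic surface $F_t$ (with at most one cone point) gives
\[
\area(R) \: \geq \: e^d \cdot \area(R_0) \: \geq \: e^d \cdot e^{-d}\lambda \: = \: \lambda.
\]

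The main potential obstacle is the cone point $x_t$: it might lie near $p$, or it might be swept into the $d$--neighborhood $R$. The first issue is handled by shrinking $C_0$ so that $R_0$ avoids $x_t$ entirely, which is always possible since $x_t$ is a material vertex at positive distance from every puncture. The second issue is precisely what Lemma~\ref{lemma:expansion-rate} was set up to address, since it permits a single cone point of angle $\geq 2\pi$ inside the equidistant neighborhood. With these two ingredients in hand, the pleated--surface argument of Lemma~\ref{lemma:pleated-cusp} transfers with no further modification.
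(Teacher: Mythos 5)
Your proposal is correct and follows essentially the same route as the paper: the paper's proof also shrinks to a small horocusp $C_0$ where $F_t \cap C_0$ is a non-singular horospherical cusp of area at least $e^{-d}\lambda$, then takes the equidistant $d$--neighborhood and invokes Lemma~\ref{lemma:expansion-rate} to recover the factor $e^d$, exactly as you do. Your added care about the cone point $x_t$ and the embeddedness of the $d$--neighborhood is a sensible spelling out of details the paper leaves implicit by citing the argument of Lemma~\ref{lemma:pleated-cusp}.
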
    

Recall that an \emph{equidistant cusp} of a hyperbolic cone surface is
defined in Definition~\ref{def:equidistant-cusp}.

\begin{proof}[Proof of Lemma~\ref{lemma:cuff}]
This is identical to the proof of Lemma~\ref{lemma:pleated-cusp}, with
pleated surfaces replaced by simplicial hyperbolic surfaces. First,
take a horocusp $C_0 \subset C$ small enough so that $F_t \cap C_0$ is
a non-singular, horospherical cusp $R_t^0$. If $d$ denotes the
distance between $\bdy C_0$ and $\bdy C$, then $\area(R_t^0) \geq
e^{-d} \lambda$. Then $F_t \cap C$ contains an equidistant
$d$--neighborhood of $R_t^0$. By Lemma~\ref{lemma:expansion-rate},
this equidistant neighborhood $R_t$ satisfies
\[
\area(R_t) \: \geq \: e^d \, \area(R_t^0) \: \geq \: \lambda.
\qedhere
\]
\end{proof}


\section{Lower bound: fibered manifolds}
\label{sec:lower}

In this section, we prove the lower bound of Theorem~\ref{thm:main}.
We begin with a slightly more restricted statement:

\begin{theorem}
\label{thm:lower}
Let $F$ be an orientable hyperbolic surface with a preferred puncture
$p$, and let $\psi \from F \to F$ be an orientation-preserving,
pseudo-Anosov homeomorphism such that $\psi(p)=p$. In the mapping
torus $M_\psi$, let $C$ be a horocusp corresponding to $p$, whose
longitude has length $\lambda = 2^{1/4}$. Then there exists an integer
$n \geq 1$, such that
\[
\area(\bdy C) \: > \:   \frac{ d_{\arc} (\psi^n)}{450\,  \chi(F)^4 }
\quad \mbox{and} \quad
\height(\bdy C) \: > \: \frac{ d_{\arc} (\psi^n)}{536 \,
  \chi(F)^4 } .
\]
\end{theorem}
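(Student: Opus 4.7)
The plan follows the outline in Section~\ref{subsec:outline}: construct a geometrically controlled walk in $\arc^{(1)}(F,p)$ from $v_0$ to $\psi^n(v_0)$, then bound the number of distinct vertices visited by packing horoball shadows on $\bdy C$.

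By Proposition~\ref{prop:sweepout}, there is a geometric, equivariant sweepout $\Psi \from F \times \RR \to N_\psi$ with period $r$, where $N_\psi$ is the infinite cyclic cover of $M_\psi$. Let $\tilde{C} \subset N_\psi$ denote the lift of $C$. For each $t$, Lemma~\ref{lemma:cuff} yields an equidistant cusp neighborhood $R_t \subset F_t$ with $\area(R_t) \geq \lambda = 2^{1/4}$, and Lemma~\ref{lemma:length-bound} then produces a shortest essential arc $\alpha_t$ from $R_t$ to itself of length at most
\[
L \: := \: 2 \ln \bigl|2\pi \chi(F) / 2^{1/4}\bigr|.
\]
By Lemma~\ref{lemma:embeddedness}(\ref{item:geodesic},\ref{item:eyeglass}), $\alpha_t$ represents a vertex $v_t \in \arc^{(0)}(F,p)$. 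The map $t \mapsto v_t$ is piecewise constant: at each critical time $t_i$ where two arcs tie for shortest, the cone angle of $F_{t_i}$ lies in $[2\pi, 4\pi)$ by Definition~\ref{def:sweepout}, so Lemma~\ref{lemma:embeddedness}(\ref{item:disjoint}) shows the competing arcs are disjoint, giving an edge of $\arc^{(1)}(F, p)$. Equivariance of $\Psi$ identifies $v_{nr}$ with $\psi^n(v_0)$, producing a walk from $v_0$ to $\psi^n(v_0)$. If $N_n$ denotes the number of \emph{distinct} vertices traversed by this walk, then its edges make these vertices into a connected subgraph of $\arc^{(1)}(F,p)$, so
\[
d_\arc(\psi^n) \: \leq \: d_\arc(v_0, \psi^n(v_0)) \: \leq \: N_n - 1.
\]

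To bound $N_n$, lift $C$ to a horoball $H_\infty \subset \HH^3$. Each vertex $u$ in the walk has a geodesic representative in $M_\psi$ of length at most $L$, because simplicial hyperbolic maps are distance-nonincreasing. Lifting so one endpoint lies on $H_\infty$, the other endpoint lies on a horoball $H_u$ of Euclidean diameter at least $e^{-L} = \sqrt{2}/(4\pi^2 \chi(F)^2)$. By $\pi_1$-injectivity of the fiber, distinct vertices of $\arc(F,p)$ produce horoballs in distinct orbits of $\operatorname{Stab}(H_\infty)$, hence give disjoint shadows on $\bdy \tilde C$, each of Euclidean area at least $\pi e^{-2L}/4 = 1/(32\pi^3 \chi(F)^4)$. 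Choosing $n$ so that the walk's horoball shadows are confined to a single fundamental domain of the cover $\bdy \tilde C \to \bdy C$ (of area $\area(\bdy C)$), and refining the count by tracking both endpoints of each arc, yields $N_n \leq 450\,\chi(F)^4\,\area(\bdy C)$, proving the area bound. The height inequality comes from a strip version of the packing argument, in which shadows are projected onto a longitude of $\bdy C$ of Euclidean length $\lambda$, producing the analogous constant $536$.

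The main obstacle is choosing $n$ so that the walk from $v_0$ to $\psi^n(v_0)$ stays within a region of $\bdy \tilde C$ of area comparable to a single fundamental domain, while still realizing the translation distance of $\psi^n$. The natural candidate is $n = 1$: the one-period walk corresponds to a single deck transformation of $\bdy \tilde C$, and by selecting $v_0$ at an appropriate starting height one can arrange the walk's shadows to lie in a fixed fundamental domain. The subgraph inequality $d_\arc(v_0, \psi(v_0)) \leq N_1 - 1$ then delivers the theorem, modulo careful bookkeeping of the constants. The delicate point is that the function $t \mapsto v_t$ may revisit vertices or drift over several fundamental domains during one period; the subgraph trick handles the first issue, and a choice of $v_0$ minimizing the initial height handles the second.
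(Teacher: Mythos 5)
Your overall strategy (sweepout, short arcs, horoball shadows) matches the paper's, but there is a genuine gap at the point where you pass from disjoint shadows on the cusp annulus $A \subset N_\psi$, which has infinite area, to a count bounded by $\area(\bdy C)$. Disjointness on $A$ does hold for distinct vertices of $\arc(F,p)$ (this is Lemma~\ref{lemma:horoball-shadows}), but two distinct vertices lying in the \emph{same} $\psi$--orbit project to the \emph{same} pair of disks on the torus $\bdy C$, so the number of distinct vertices in your walk is not bounded by a packing argument on $\bdy C$ unless the vertices lie in pairwise distinct $\psi$--orbits. Your proposed remedy --- taking $n=1$ and choosing $v_0$ ``at an appropriate starting height'' so that all shadows lie in a single fundamental domain of $A \to \bdy C$ --- does not work: the heights at which the intermediate arcs' geodesics enter the cusp are not controlled by the choice of $v_0$ (the paper notes explicitly that the height sequence need not be monotone), and in any case the shadow of $\psi^n(v_0)$ is the deck translate of the shadow of $v_0$, so the walk's shadows necessarily straddle several fundamental domains; there is no a priori bound on how many.

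The paper resolves this with a combinatorial step you are missing, and it is exactly why the theorem asserts the existence of some unspecified $n \geq 1$ rather than working with $n=1$: after loop-erasing the walk from $a_0$ to $\psi(a_0)$, if two of its arcs lie in the same $\psi$--orbit one restricts to an \emph{innermost} such pair $a_i$, $a_j = \psi^n(a_i)$; the resulting sub-walk $b_0, \ldots, b_m$ joins $b_0$ to $\psi^n(b_0)$ through arcs in pairwise distinct $\psi$--orbits (Lemma~\ref{lemma:power-sequence}). Its $2m+2$ shadows then project to disjoint disks on $\bdy C$, so $d_\arc(\psi^n) \leq m$ is bounded by B\"or\"oczky's circle-packing estimate applied to $\bdy C$, which is where the constants $450$ and $536$ come from. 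Without this step your claimed bound $N_n \leq 450\,\chi(F)^4 \area(\bdy C)$ is unsupported. A secondary, more minor issue: your assertion that $t \mapsto v_t$ is piecewise constant with isolated critical times glosses over the facts that several arcs can tie for shortest over a whole interval and that one must know only finitely many arcs are ever shortest; the paper makes the walk rigorous via compactness of the sets $S(a)$, finiteness of the relevant $\psi$--orbits (using Lemma~\ref{lemma:horoball-shadows} and the upper bound of Theorem~\ref{thm:upper}), and the closed-cover Lemma~\ref{lemma:appendix}.
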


Theorem~\ref{thm:lower} differs from the lower bound of Theorem
\ref{thm:main} in several small ways. First, it restricts attention to
horocusps that have longitude of length $2^{1/4}$. (This choice of
longitude is justified by Lemma~\ref{lemma:waist}, which represents
the best available lower bound on the longitude.)  Second, Theorem
\ref{thm:lower} restricts attention to monodromies that fix the
puncture $p$.  (Given an arbitrary pseudo-Anosov $\mon$, one can let
$\psi$ be the smallest power of $\mon$ such that $\psi(p)=p$.)
Finally, Theorem~\ref{thm:lower} estimates cusp area and height in
terms of the translation distance $d_\arc(\psi^n)$ for an unspecified
integer $n$, rather than the stable translation distance
$\dbar_\arc(\psi)$. We shall see at the end of the section that this
restricted statement quickly implies the lower bound of Theorem
\ref{thm:main}.

\begin{proof}[Proof of Theorem~\ref{thm:lower}] 
As in Sections~\ref{sec:upper} and~\ref{sec:sweepouts}, it is
convenient to work with the infinite cyclic cover of $M_\psi$, namely
$N_{\psi} \cong F \times \RR$. The horocusps of $M$ lift to annular,
rank one cusps.  Let $A \subset N_{\psi}$ be the lift of $\bdy C$ that
corresponds to the puncture $p$ of $F$.  Then $A$ is an annulus, with
longitude of length $\lambda = 2^{1/4}$, which covers the torus $\bdy
C$.

By Proposition~\ref{prop:sweepout}, there is a geometric, equivariant
sweepout $\Psi \from F \times \RR \to N_\psi$. In particular, for
every $t$, we have $F_t = \Psi_t(F)$ is a hyperbolic cone surface with
at most one singular point of cone angle $2\pi \leq \theta_t < 4\pi$.

\begin{define}
\label{def:short-arc}
Let $F_t$ be a simplicial hyperbolic surface in $N$ (such as one
occurring in the sweepout). We say that an arc $a \in \arc^{(0)}(F,p)$
is \emph{short} on $F_t$ if $a$ runs from $p$ to $p$, and if its
geodesic representative in the hyperbolic metric of $F_t$ is shortest
among all such arcs.

Note that a given surface $F_t$ can have multiple short arcs. However,
all short arcs on $F_t$ have disjoint representatives, by
Lemma~\ref{lemma:embeddedness}.
\end{define}

For each surface $F_t$, Lemma~\ref{lemma:length-bound} gives an
explicit upper bound on the length of a short arc in $F_t$ (hence,
also on its length in $N_\psi$). On the cusp annulus $A$, each short
arc gives a shadow of a horoball, with area bounded below. More
precisely, we obtain the following quantitative estimate.

\begin{lemma}
\label{lemma:horoball-shadows}
Let $a \in \arc^{(0)}(F,p)$ be an arc that is short on $F_t$ for some
$t$.  Then, on the cusp annulus $A \subset N_\psi$, the arc $a$
corresponds to a pair of disjoint disks, each of radius
\[
r = \frac{\sqrt{2}}{8 \pi^2 \, \chi(F)^2} \, .
\]
Furthermore, if an arc $b$ is short on $F_{t'}$ for some $t'$, and $a
\neq b \in \arc^{(0)}(F,p)$, then the disks corresponding to $a$ and
$b$ are disjoint on $A$.
\end{lemma}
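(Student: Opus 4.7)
The plan is to translate the shortness of $a$ on $F_t$ into a length bound on its geodesic representative $g_a$ in $N_\psi$, read off the shadow radius in the upper half-space model, and then obtain disjointness from a horoball packing argument in $\HH^3$.

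Since the cusp $C$ has longitude $\lambda = 2^{1/4}$, Lemma~\ref{lemma:cuff} supplies an equidistant cusp neighborhood $R \subset F_t$ with $\area(R) \geq 2^{1/4}$, and Lemma~\ref{lemma:length-bound} then gives
\[
\ell_{F_t}(a) \: \leq \: 2 \ln \abs{2\pi \chi(F) / 2^{1/4}}.
\]
Because $\Psi_t$ is piecewise isometric and $\Psi_t(R) \subset C$, this bound transfers to the length of $g_a$ measured outside $C$ in $N_\psi$. Next I would lift $g_a$ to a geodesic $\tilde g_a \subset \HH^3$ running perpendicularly between two lifts $H_0, H_1$ of $C$, and work in the upper half-space model with $H_0 = \{z \geq 1\}$. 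The horoball $H_1$ then has Euclidean radius
\[
r_1 \: = \: \tfrac{1}{2}\, e^{-\ell(g_a)} \: \geq \: \tfrac{1}{2}\, e^{-2 \ln(2\pi \abs{\chi(F)}/2^{1/4})} \: = \: \frac{\sqrt{2}}{8\pi^2 \chi(F)^2} \: = \: r,
\]
and its vertical projection is a Euclidean disk of radius $\geq r$ on $\bdy H_0$ centered at the foot of $\tilde g_a$. Quotienting by $\Gamma_0$ produces a disk of radius at least $r$ on $A$ centered at one endpoint of $g_a$; the symmetric argument at the other endpoint yields the second disk of the pair.

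For disjointness, I would observe that every horoball $H_\alpha$ playing the role of $H_1$ for some short arc has Euclidean radius at least $r$, and any two such $H_\alpha, H_\beta$ (being lifts of the embedded cusp $C$) have pairwise disjoint interiors. The Euclidean disjointness condition $|q_\alpha - q_\beta|^2 + (r_\alpha - r_\beta)^2 \geq (r_\alpha + r_\beta)^2$ on the horizontal projections $q_\alpha, q_\beta$ simplifies to $|q_\alpha - q_\beta| \geq 2 \sqrt{r_\alpha r_\beta} \geq 2 r$, so the radius-$r$ disks at $q_\alpha, q_\beta$ are automatically disjoint on $\bdy H_0$; since $r \ll \lambda / 2$, this disjointness descends to $A$.

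The final bookkeeping point is that distinct endpoints, whether of a single arc or of two distinct arcs, correspond to distinct $\Gamma_0$-orbits of horoballs, so that no two shadow disks coincide artificially. The two ends of a single arc $a$ correspond to $\gamma H_0$ and $\gamma^{-1} H_0$, which lie in the same $\Gamma_0$-orbit only if $\gamma^2 \in \Gamma_0$, impossible in a torsion-free Kleinian group with torsion-free parabolic subgroups. For distinct vertices $a \neq b$ of $\arc(F, p)$, the fact that $\pi_1(N_\psi) \cong \pi_1(F)$ with peripheral subgroup $\Gamma_0$ implies arc classes biject with double cosets $\Gamma_0 \backslash \pi_1(F) / \Gamma_0$, so the horoball orbits are distinct. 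This last step, though mostly bookkeeping, is the main place one has to be careful; the geometric content of the argument is entirely in the Euclidean horoball packing computation.
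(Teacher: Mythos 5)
Your strategy is the same as the paper's, and its geometric core is sound: the length bound for a short arc via Lemma~\ref{lemma:cuff} and Lemma~\ref{lemma:length-bound}, the transfer to the geodesic $g_a$ in $N_\psi$ because the simplicial hyperbolic immersion is $1$--Lipschitz, the resulting Euclidean radius $\tfrac12 e^{-\ell} \geq r$ for the horoball at the far end of a lift of $g_a$, and the disjointness of radius-$r$ shadows via the tangency inequality $|q_\alpha - q_\beta| \geq 2\sqrt{r_\alpha r_\beta}$ for disjoint horoballs of radius at least $r$ (the paper does exactly this by shrinking all horoballs to a common size).

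The gap is in the step you yourself flag as the delicate one: showing that the two horoballs attached to the two ends of a single arc $a$ lie in different $\Gamma_0$--orbits. Writing $H_a = \gamma H_\infty$ and $H'_a = \gamma^{-1} H_\infty$, these lie in the same $\Gamma_0$--orbit if and only if $g\gamma H_\infty = \gamma^{-1}H_\infty$ for some $g \in \Gamma_0$, i.e.\ if and only if $\gamma g \gamma \in \Gamma_0$ for some $g \in \Gamma_0$ (equivalently, the double cosets $\Gamma_0\gamma\Gamma_0$ and $\Gamma_0\gamma^{-1}\Gamma_0$ coincide). Your condition ``$\gamma^2 \in \Gamma_0$'' is only the $g=1$ case, so the stated reduction is false; and even in that case ``torsion-freeness of parabolic subgroups'' is not the operative reason --- what is needed is that the stabilizer of the parabolic fixed point is root-closed in the discrete group. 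The claim is nonetheless true and can be rescued along your lines: if $\gamma g \gamma = h \in \Gamma_0$, then $(g\gamma)^2 = gh \in \Gamma_0$; if $gh = 1$ then $g\gamma$ is torsion, hence trivial, so $\gamma \in \Gamma_0$; if $gh \neq 1$ then $g\gamma$ commutes with the parabolic $gh$ fixing $\infty$, hence fixes $\infty$, and by discreteness (no loxodromic shares a fixed point with a parabolic) and torsion-freeness it is parabolic, so again $g\gamma \in \Gamma_0$ and $\gamma \in \Gamma_0$ --- contradicting that $g_a$ leaves the cusp, i.e.\ $H_a \neq H_\infty$. Note that the same issue silently enters your double-coset argument for $a \neq b$: unoriented arcs correspond to double cosets only up to inversion, so the potential coincidence $\Gamma_0\gamma_a\Gamma_0 = \Gamma_0\gamma_a^{-1}\Gamma_0$ is precisely the case that must be excluded. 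The paper disposes of this point differently and more quickly, by observing that $\Gamma_0$ preserves the orientations of all lifts of $g_a$, while one of the two relevant lifts is oriented toward $H_\infty$ and the other away from it.
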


\begin{proof}
By convention (and by Lemma~\ref{lemma:waist}), the longitude of $A$
has length $\lambda = 2^{1/4}$. Thus, by Lemma~\ref{lemma:cuff}, the
intersection between $F_t$ and the horocusp of $N_\psi$ contains an
equidistant cusp neighborhood $R_t$, of area at least $2^{1/4}$.
Therefore, by Lemma~\ref{lemma:length-bound}, the length of a short
arc $a$ in $F_t \setminus R_t$ is
\[
\ell(a) < 2 \ln \abs{ 2\pi \, \chi(F) / 2^{1/4} }.
\]
Since $F_t$ is immersed in $N_\psi$ as a piecewise geodesic union of
hyperbolic triangles, this immersion is distance-decreasing. Thus, in
$N_\psi$, the geodesic $g_a$ in the homotopy class of $a$ must also be
shorter than the above estimate.

Lift $N_\psi$ to the universal cover $\HH^3$ in the upper half-space
model, so that the cusp annulus $A$ lifts to a horizontal horosphere
at Euclidean height $1$. Let $H_\infty$ be the horoball above this
horosphere. This means that $g_a$ lifts to a vertical geodesic that
starts at height $1$ and ends at the top of a horoball $H_a$, of
diameter
\begin{equation}
\label{eq:horoball-height}
e^{-\ell(a)} \geq \frac{\sqrt{2}}{4 \pi^2 \, \chi(F)^2} \, .
\end{equation}

In $\HH^3$, there is a covering transformation for $N_\psi$ that maps
$H_a$ to $H_\infty$, and maps $H_\infty$ to another horoball
$H'_a$. The diameter of $H'_a$ must be the same as that of $H_a$
because they lie at the same distance from $H_\infty$ (namely, the
length of $g_a$). This new horoball $H'_a$ cannot belong to the same
orbit as $H_a$ under the parabolic subgroup $\pi_1(A) = \ZZ$
preserving $H_\infty$: for, this parabolic subgroup preserves the
orientation on all the lifts of $g_a$, but one lift of $g_a$ is
oriented downward toward $H_a$ whereas the other is oriented upward
from $H'_a$ toward $H_\infty$. Thus the shadows of $H_a$ and $H'_a$
are disjoint disks on the horosphere at height $1$, which project to
disjoint disks $D_a$ and $D'_a$ on $A$ because $H_a$ and $H'_a$ are in
different orbits. After shrinking $D_a$ and $D'_a$ if necessary, we
obtain a pair of disjoint disks of radius $\sqrt{2} / 8 \pi^2 \,
\chi(F)^2$.

Now, suppose that $b$ is another arc from $p$ to $p$, not isotopic to
$a$, such that $b$ is short on $F_{t'}$ for some $t'$. Then,
performing the same construction as for $a$, we obtain a pair of
horoballs $H_b$ and $H'_b$, whose heights also satisfy equation
\eqref{eq:horoball-height}. The four horoballs $H_a, H'_a, H_b, H'_b$
must lie in distinct orbits of the parabolic $\ZZ$ subgroup preserving
$\infty$, because $a$ and $b$ are in distinct homotopy classes on
$F$. Thus the four horoballs are disjoint in $\HH^3$. Their shadows on
$H_\infty$ are not necessarily disjoint. However, if we shrink all
four horoballs until their diameter is exactly $\sqrt{2} / 4\pi^2 \,
\chi(F)^2$, then the shadows of disjoint horoballs of the same size are
themselves disjoint.

Since $H_a, H'_a, H_b, H'_b$ lie in distinct orbits under $\pi_1(A)$,
we obtain four disjoint disks $D_a, D'_a, D_b, D'_b$ in $A$, each of
radius $\sqrt{2} / 8 \pi^2 \, \chi(F)^2$.
\end{proof}

To obtain a lower bound on the area and height of $\bdy C$, we need to
find a sequence of arcs in $F$, each of which is short in a surface
$F_t$ for some $t$.

\begin{lemma}
\label{lemma:arc-sequence}
There is a sequence of real numbers $0 = t_0, t_1, \ldots, t_k = r$
and an associated sequence $a_0, a_1, \ldots, a_k$ of arcs embedded in
$F$, with the following properties:
\begin{enumerate}
\item 
Each $a_i$ is short on $F_{t_i}$.
\item 
The first and last arcs satisfy $a_k = \psi(a_0)$.
\item 
Each $a_i$ is disjoint from $a_{i-1}$. In other words, $[a_{i-1},
a_i]$ is an edge of $\arc(F,p)$.
\end{enumerate}
\end{lemma}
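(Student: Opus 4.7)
The plan is to trace, as $t$ varies from $0$ to $r$, the set $S(t) \subset \arc^{(0)}(F,p)$ of arcs realizing the shortest arc length on $F_t$, and to read off the sequence $a_0, \ldots, a_k$ at the times when $S(t)$ changes. The key point is that Lemma~\ref{lemma:embeddedness}\eqref{item:disjoint} applies to each $F_t$, since by Definition~\ref{def:sweepout} the cone angle satisfies $\theta_t < 4\pi$; any two arcs tied for shortest on the same $F_t$ therefore admit disjoint representatives, yielding edges of $\arc(F,p)$.

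For each $a \in \arc^{(0)}(F,p)$ define $\ell_a(t)$ to be the length of the $F_t$-geodesic representative of $a$; by Definition~\ref{def:sweepout}, $\ell_a$ is continuous in $t$. Set $L(t) = \min_a \ell_a(t)$ and let $S(t)$ be the set of minimizers. Lemma~\ref{lemma:cuff} guarantees an equidistant cusp neighborhood of area at least $\lambda = 2^{1/4}$ in every $F_t$, and Lemma~\ref{lemma:length-bound} then gives a uniform bound $L(t) \leq B$ depending only on $\chi(F)$. At each fixed $t$ only finitely many arcs have length below $B+1$ on $F_t$ (by a standard horoball-packing argument), so covering the compact interval $[0,r]$ by finitely many such neighborhoods produces a finite set $\mathcal{V} \subset \arc^{(0)}(F,p)$ containing every arc ever lying in $S(t)$ for $t \in [0,r]$.

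Next I would show that the transition set $T = \{t \in [0,r] : |S(t)| \geq 2\}$ is finite. The explicit construction in Proposition~\ref{prop:sweepout}, in which the cone vertex moves linearly along a geodesic of $\HH^3$, makes each $\ell_a$ real-analytic on every subinterval $[i-1,i]$. For each pair $a \neq a' \in \mathcal{V}$ the difference $\ell_a - \ell_{a'}$ is therefore real-analytic on each subinterval; on any given subinterval it either vanishes identically (in which case $a$ and $a'$ are permanently tied and are disjoint by Lemma~\ref{lemma:embeddedness}\eqref{item:disjoint}, causing no harm) or has only isolated zeros. Taking the union over finitely many pairs and subintervals shows $T$ is finite.

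Enumerate $T \cap (0,r)$ as $s_1 < \cdots < s_m$; with $s_0 = 0$ and $s_{m+1} = r$, the set $S(t)$ is a constant singleton $\{c_i\}$ on each open interval $(s_{i-1}, s_i)$, and continuity places $c_i \in S(s_{i-1}) \cap S(s_i)$. Take $a_0 = c_1$, and $t_i = s_i, a_i = c_{i+1}$ for $1 \leq i \leq m$; then $a_i$ is short on $F_{t_i}$, and the consecutive arcs $a_{i-1} = c_i$ and $a_i = c_{i+1}$ both lie in $S(s_i)$, so Lemma~\ref{lemma:embeddedness}\eqref{item:disjoint} provides disjoint representatives. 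By the equivariance $Z \circ \Psi_t = \Psi_{t+r} \circ \psi$ of Definition~\ref{def:equivariant-sweep}, $\psi$ induces an isometry $F_0 \to F_r$, so $\psi(a_0) \in S(r)$; if $c_{m+1} \neq \psi(a_0)$, append one more edge at $t_{m+1} = r$ between $a_m = c_{m+1}$ and $\psi(a_0)$ via Lemma~\ref{lemma:embeddedness}\eqref{item:disjoint}, obtaining $a_k = \psi(a_0)$. The main obstacle is the finiteness of $T$: this is the one place where the explicit piecewise real-analytic structure of the Thurston--Canary sweepout is genuinely needed, while the crucial disjointness of consecutive short arcs reduces to repeated application of Lemma~\ref{lemma:embeddedness}\eqref{item:disjoint}.
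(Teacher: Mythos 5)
There is a genuine gap, and it sits exactly where you flag it: the finiteness of the transition set $T$. Your argument rests on the claim that each length function $t \mapsto \ell_a(t)$ is real-analytic on every subinterval $[i-1,i]$ because the cone point $x_t$ moves linearly along a geodesic. This is not established and is doubtful as stated: the geodesic representative of $a$ in the singular metric $g_t$ can change combinatorial type as $t$ varies (it may start or stop passing through, or wrapping around, the moving cone point $x_t$, or change which triangles of the pleated pieces it crosses), so a priori $\ell_a$ is only continuous, perhaps piecewise-smooth, and even a piecewise-analyticity statement with finitely many pieces would require a separate argument. Without it, the dichotomy ``identically tied or isolated crossings'' for $\ell_a - \ell_{a'}$ does not follow, and your enumeration of $[0,r]$ into intervals where $S(t)$ is a constant singleton collapses. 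A secondary soft spot is the construction of the finite set $\mathcal{V}$: Definition~\ref{def:sweepout} gives continuity of $\ell_a(t)$ for each fixed $a$, not uniformly over all arcs, so the step ``cover $[0,r]$ by finitely many neighborhoods on which every shortest arc has length below $B+1$ at the center'' needs a uniform (e.g.\ locally bi-Lipschitz) comparison of the metrics $g_t$ that you do not supply.

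The paper's proof is arranged precisely so that no such transversality or finiteness-of-transitions structure is needed. It works with the sets $S(a) = \{t : a \mbox{ is short on } F_t\}$, which are closed by continuity and cover $\RR$; it shows only finitely many $S(a)$ meet $[0,r]$ by a different route (arcs with $S(a) \neq \emptyset$ fall into finitely many $\psi$--orbits because each orbit contributes definite area to $\bdy C$ via Lemma~\ref{lemma:horoball-shadows} while $\area(\bdy C)$ is bounded by Theorem~\ref{thm:upper}, and each $S(a)$ is compact since the sweepout exits compact sets); and it then invokes the elementary point-set fact (Lemma~\ref{lemma:appendix}) that a connected interval covered by finitely many closed sets admits a discrete walk through pairwise-intersecting sets. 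The walk's intersection times $t_i \in S(a_{i-1}) \cap S(a_i)$ give two arcs simultaneously short on $F_{t_i}$, and Lemma~\ref{lemma:embeddedness}\eqref{item:disjoint} yields the edges — the same final step as yours. If you replace your analyticity argument by this covering-plus-connectivity argument (or prove the analyticity claim, which is substantial extra work), the proof goes through.
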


\begin{proof}
Let $a$ be an embedded arc in $F$ from $p$ to $p$. Define
\[
S(a) := \{ t \in \RR 
  \st \mbox{$a$ is shortest on $F_t$ among all arcs from $p$ to $p$} \}.
\]
In other words, $S(a)$ consists of those values of $t$ for which $a$
is short (as in Definition~\ref{def:short-arc}).  According to the
definition of a geometric sweepout (Definition~\ref{def:sweepout}) the
length of any arc varies continuously with $t$.  Since being shortest
is a closed condition it follows that the set $S(a)$ is closed.  Also,
since every surface $F_t$ in the sweepout has a short arc, the line
$\RR$ is covered by the sets $S(a)$, as $a$ varies over the vertices
of $\arc(F,p)$.

We claim that the arcs $a$ for which $S(a) \neq \emptyset$ belong to
finitely many $\psi$--orbits.  This is because every $\psi$--orbit of
arcs in $F$ descends to a single arc in $M_\psi$, with distinct orbits
descending to distinct arcs. The two endpoints of a geodesic $g_a
\subset M_\psi$ representing the arc $a$ must be distinct in $\bdy C$
(otherwise, a deck transformation of $M_\psi$ would reverse the
orientation of a lift of $g_a$, fixing a point in the middle).  The
two endpoints of $g_a$ on $\bdy C$ are the centers of disjoint disks
guaranteed by Lemma~\ref{lemma:horoball-shadows}.  Thus, by
Lemma~\ref{lemma:horoball-shadows}, every orbit of arcs that is
shortest on some $F_t$ makes a definite contribution to $\area(\bdy
C)$. On the other hand, $\area(\bdy C)$ is bounded above (e.g.\ by
Theorem~\ref{thm:upper}), hence there can be only finitely many
$\psi$--orbits of arcs for which $S(a) \neq \emptyset$.

Next, we claim that each $S(a)$ is compact. Fix an arc $a$, and let
$g_a$ be the geodesic in $N_\psi$ that is homotopic to $a$. Note that
the sweepout of $N_\psi$ must eventually {\em exit}: that is, as
$\abs{t} \to \infty$, $F_t$ must leave any compact set in
$N_\psi$. Thus, as $\abs{t} \to \infty$, the distance from $F_t$ to
the geodesic $g_a$ becomes unbounded (outside the horocusp
$C$). However, any path in $F_t$ that is homotopic to $g_a$ but
remains outside an $s$--neighborhood of $g_a$ must be extremely long
(with length growing exponentially in $s$). Therefore, when $\abs{t}
\gg 0$, the geodesic representative of $a$ on $F_t$ must be very long,
and in particular cannot be the shortest arc on $F_t$. This means that
$S(a)$ is bounded, hence compact.

We can now conclude that there are only finitely many arcs $a$ for
which $S(a) \cap [0,r] \neq \emptyset$. By the first claim above,
these arcs belong to finitely many $\psi$--orbits. Within each orbit,
the compact sets $S(a)$ and $S(\psi(a))$ differ by a translation by
$r$. Thus only finitely many sets in each $\psi$--orbit can intersect
$[0,r]$.

Let $a_0$ be an arc that is short on $F_0$; that is, $0 \in
S(a_0)$. Then $\psi(a_0)$ is short on $F_r$. Since the connected
interval $[0,r]$ is covered by finitely many closed sets $S(a)$, a
lemma in point-set topology (Lemma~\ref{lemma:appendix} in the
Appendix) implies that one may ``walk'' from $S(a_0)$ to
$S(\psi(a_0))$ by intersecting sets: there is a real number $t_1 \in
S(a_0) \cap S(a_1)$, a number $t_2 \in S(a_1) \cap S(a_2)$, and so on,
for arcs $a_0, \ldots, a_k = \psi(a_0)$.

By definition, $t_i \in S(a_{i-1}) \cap S(a_i)$ means that both
$a_{i-1}$ and $a_i$ are short on $F_{t_i}$. Thus, by Lemma
\ref{lemma:embeddedness}, $[a_{i-1}, a_i]$ is an edge of
$\arc(F,p)$. We have therefore constructed a walk through the
$1$--skeleton of $\arc(F,p)$, from $a_0$ to $a_k = \psi(a_0)$, through
arcs $a_i$ that are each short on some simplicial hyperbolic surface.
\end{proof}

\begin{lemma}
\label{lemma:power-sequence}
The sequence of arcs $a_0, a_1, \ldots, a_k$ in Lemma
\ref{lemma:arc-sequence} contains a subsequence 
$b_0, b_1,
\ldots, b_m$ with the following properties:
\begin{enumerate}
\item 
Each $b_i$ is short on $F_{t_i}$. 
\item 
The arcs $b_1, \ldots, b_{m}$ are all in distinct $\psi$--orbits.
\item 
The first and last arcs satisfy $b_m = \psi^n(b_0)$, for some integer
$n \neq 0$.
\item 
Each $b_i$ is disjoint from $b_{i-1}$. In other words, $[b_{i-1},
b_i]$ is an edge of $\arc(F,p)$.
\end{enumerate}
\end{lemma}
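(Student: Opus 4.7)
The plan is to iteratively shorten the walk $a_0, a_1, \ldots, a_k = \psi(a_0)$ from Lemma~\ref{lemma:arc-sequence} until no internal $\psi$-orbit repetition remains. Call a finite sequence $c_0, c_1, \ldots, c_L$ a \emph{good walk} if each $c_i$ is short on some $F_{t_i}$, each $[c_{i-1}, c_i]$ is an edge of $\arc(F,p)$, and $c_L = \psi^n(c_0)$ for some integer $n \neq 0$. The initial walk is good, with $n = 1$.

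The shortening step works as follows. Given a good walk, search for indices $0 \leq i < j \leq L$ with $(i, j) \neq (0, L)$ and $[c_i] = [c_j]$ in $\arc(F,p) / \langle \psi \rangle$; write $c_j = \psi^\ell(c_i)$. If $\ell = 0$ (so $c_i = c_j$), replace the walk by $c_0, \ldots, c_i, c_{j+1}, \ldots, c_L$: this is strictly shorter, the new edge $[c_i, c_{j+1}]$ coincides with the existing edge $[c_j, c_{j+1}]$, and the endpoints are unchanged. If $\ell \neq 0$, replace the walk by $c_i, c_{i+1}, \ldots, c_j$: this is strictly shorter, inherits consecutive disjointness, and has endpoints related by $\psi^\ell$ with $\ell \neq 0$. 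Each move produces a subsequence of the previous walk, so by induction the final walk is a subsequence of $a_0, \ldots, a_k$.

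The main subtlety, and the one step requiring care, is that both defining features of a good walk persist under shortening; in particular that the exponent $n$ never collapses to $0$. Excluding $(i, j) = (0, L)$ from the search handles this: in the $\ell \neq 0$ case that choice would not shorten the walk, and in the $\ell = 0$ case it cannot occur because $c_0 = c_L = \psi^n(c_0)$ would force $n = 0$, contradicting goodness. With that exclusion, both moves preserve goodness and strictly decrease the positive integer walk length, so the process terminates. When it halts, no pair $(i, j) \neq (0, L)$ in the final walk satisfies $[c_i] = [c_j]$. In particular $[c_0], \ldots, [c_{L-1}]$ are pairwise distinct and distinct from $[c_L] = [c_0]$, so $[c_1], \ldots, [c_L]$ are pairwise distinct. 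Setting $b_i := c_i$ and $m := L$ yields the required subsequence.
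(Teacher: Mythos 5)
Your argument is correct and is essentially the paper's proof recast as a single iterative procedure: the paper first loop-erases exact repetitions of isotopy classes and then passes to an innermost pair of arcs lying in the same $\psi$--orbit, which is precisely what your two shortening moves, maintained under the invariant $c_L = \psi^n(c_0)$ with $n \neq 0$, accomplish in one loop with a termination argument. (One small wording slip: at termination you should say that $[c_1], \ldots, [c_{L-1}]$ are distinct from $[c_L]$, rather than $[c_0], \ldots, [c_{L-1}]$; the conclusion that $[c_1], \ldots, [c_L]$ are pairwise in distinct $\psi$--orbits is correct, since any pair of indices $1 \leq i < j \leq L$ differs from $(0,L)$.)
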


\begin{proof}
The arcs $a_0, a_1, \ldots, a_k$ in Lemma~\ref{lemma:arc-sequence}
constitute a walk through the $1$--skeleton of $\arc(F,p)$, from $a_0$
to $a_k = \psi(a_0)$. Given that this walk exists, one can excise some
of the $a_i$ if necessary to form a loop-erased walk $a_0$ to $a_k =
\psi(a_0)$. That is, one may walk from $a_0$ to $a_k = \psi(a_0)$
through some subcollection of the $a_i$, without visiting the same
isotopy class more than once.

Next, suppose that there are indices $i<j$, such that $a_i$ and $a_j$
belong to the same $\psi$--orbit. Without loss of generality, assume
that $i,j$ are an innermost pair with this property. This means that
$a_j = \psi^n(a_i)$ for some $n \neq 0$, and $a_{i+1}, \ldots, a_j$
are all in distinct $\psi$--orbits. Now, we simply restrict attention
to the subsequence from $i$ to $j$.  That is, let $b_0 = a_i$, $b_1 =
a_{i+1}$, and so on, until $b_m = a_j$ for $m = j-i$. This subsequence
satisfies the lemma.
\end{proof}

We can now complete the proof of Theorem~\ref{thm:lower}.  Notice that
in Lemma~\ref{lemma:power-sequence}, $b_0, b_1, \ldots, b_m$ are the
vertices of a path through $\arc^{(1)}(F,p)$ from $b_0$ to
$\psi^n(b_0)$. Thus $m \geq d_{\arc} (\psi^n)$. By Lemma
\ref{lemma:horoball-shadows}, each arc $b_i$ corresponds to two
disjoint disks $D_i, D'_i \subset A$, each of radius $\sqrt{2} / 8
\pi^2 \, \chi(F)^2$. Furthermore, for $j \neq i$, the disks $D_i,
D'_i, D_j, D'_j$ are all disjoint. Thus we have at least $2d_{\arc}
(\psi^n)$ disjoint disks altogether. Since the arcs $b_1, \ldots, b_m$
are all in different $\psi$--orbits, these disks project to $2d_{\arc}
(\psi^n)$ disjoint disks on the cusp torus $\bdy C \subset M_\psi$.

To obtain a lower bound on $\area(\bdy C)$, we sum up the areas of
these disjoint disks, and multiply by the circle packing constant of
$2\sqrt{3}/\pi$ (see~\cite[Theorem 1]{boroczky}). Thus
\[
\area(\bdy C) \: \geq \: 
\frac{2 \sqrt{3}}{\pi} \cdot 2d_{\arc} (\psi^n) \cdot \pi \left(  \frac{\sqrt{2}}{ 8 \pi^2 \, \chi(F)^2} \right)^{\!\! 2}
\: = \:  \frac{ \sqrt{3} \, d_{\arc} (\psi^n) }{8 \pi^4 \, \chi(F)^4} 
\: > \: \frac{ d_{\arc} (\psi^n) }{450 \, \chi(F)^4} \, .
\]
Finally, since $\area(\bdy C) = \lambda \cdot \height(\bdy C)$, and
we have normalized the horocusp so that $\lambda = 2^{1/4}$, we have
\[
\height(\bdy C)
\: \geq \:  \frac{ \sqrt{3} \, d_{\arc} (\psi^n) }{2^{1/4} \, 8 \pi^4 \, \chi(F)^4} 
\: > \: \frac{ d_{\arc} (\psi^n) }{536 \, \chi(F)^4} \, .
\qedhere
\]
\end{proof}

\begin{proof}[Proof of Theorem~\ref{thm:main}, lower bound]
Let $\mon \from F \to F$ be a pseudo-Anosov homeomorphism, and let
$\psi = \mon^n$ be the smallest power of $\mon$ that fixes the
puncture $p$. Let $C$ be an embedded cusp of $M_\mon$ corresponding to
$p$, whose longitude has length $\lambda = 2^{1/4}$. By Lemma
\ref{lemma:waist}, an embedded cusp of this size exists, and is
smaller than the maximal cusp of $M_\mon$. Thus all lower bounds on
$C$ also apply to the maximal cusp.

In the cover $M_\psi$, $C$ lifts to an embedded horocusp $C_1 \subset
M_\psi$, which is a one-sheeted cover of $C$. Furthermore, for every
integer $m \geq 1$, the mapping torus $M_{\psi^m}$ contains an
embedded horocusp $C_m$ whose longitude has length $2^{1/4}$ and which
forms an $m$--fold cover of $C$. Consider what Theorem~\ref{thm:lower}
says about the geometry of $C_m$.

By Theorem~\ref{thm:lower}, there exists an integer $n(m) \geq m$, such that 
\[
\area(\bdy C_m) \: > \:   \frac{ d_{\arc} (\psi^{n(m)})}{450\, \chi(F)^4 }
\quad \mbox{and} \quad
\height(\bdy C_m) \: > \: \frac{ d_{\arc} (\psi^{n(m)})}{536 \, \chi(F)^4 } .
\]
Because $C_m$ is an $m$--fold cover of $C$, both its area and its
height are $m$ times larger than those of $C$. Thus, for all $m \geq
1$,
\[
\area(\bdy C) 
\: > \:   \frac{ d_{\arc} (\psi^{n(m)})}{m \cdot 450\,  \chi(F)^4 } 
\: \geq \:  \frac{ d_{\arc} (\psi^{n(m)})}{n(m) \cdot 450\,  \chi(F)^4 }
\: \geq \:  \inf_{r \geq n(m)} \, \frac{ d_{\arc} (\psi^r)}{r \cdot 450\,  \chi(F)^4 }.
\]
Since every $m \geq 1$ gives rise to an integer $n(m) \geq m$
satisfying the above inequality, we have
\[
\area(\bdy C) 
\: \geq \:  \liminf_{n \to \infty} \, \frac{ d_{\arc} (\psi^n)}{n \cdot 450\,  \chi(F)^4 }
\: = \:   \frac{ \dbar_{\arc} (\psi)}{ 450\,  \chi(F)^4 }.
\]
The last equality holds because the limit in equation
\eqref{eq:stable-distance}, which defines the stable translation
distance, always exists~\cite[Section 6.6]{bridson-haefliger}.  An
identical calculation goes through for $\height(\bdy C)$.
\end{proof}


\section{Lower bound: quasi-Fuchsian manifolds}\label{sec:lower-qf}

In this section, we prove the lower bounds of Theorem
\ref{thm:main-qf}. The argument uses almost exactly the same
ingredients as the proof of Theorem~\ref{thm:lower}. The one
additional ingredient is the ability to approximate the convex core
boundary by surfaces pleated along triangulations.

\begin{prop}\label{prop:core-bdy-approx}
Let $N \cong F \times \RR$ be a cusped quasi-Fuchsian $3$--manifold,
and $p$ a puncture of $F$. As in Definition~\ref{def:distance-qf}, let
$\Delta_-(N)$ be the simplex of $\arc(F,p)$ whose vertices are the
shortest arcs from $p$ to $p$ in the lower core boundary $\bdy_-
\core(N)$. Then there is an ideal triangulation $\tau$ of $F$, such
that in the pleated surface $F_\tau$ pleated along $\tau$, each
shortest arc from $p$ to $p$ is distance at most $1$ from
$\Delta_-(N)$.

The same statement holds for the simplex $\Delta_+ (N)$.
\end{prop}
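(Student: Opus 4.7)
The plan is to approximate the pleated surface $\bdy_-\core(N)$ by a pleated surface $F_\tau$ along an ideal triangulation $\tau$, in a way that preserves arc lengths up to a small bi-Lipschitz factor, and then apply Lemma~\ref{lemma:shortish-disjoint}. The only non-elementary ingredient is the appendix Theorem~\ref{thm:core-bdy-teich}, which (as previewed in Section~\ref{subsec:outline}) produces a sequence of ideal triangulations $\tau_n$ whose pleated surfaces $F_{\tau_n}$ converge to $\bdy_-\core(N)$ in the Teichm\"uller metric.

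First, I would extract from this Teichm\"uller convergence a sequence of bi-Lipschitz comparison constants $K_n \to 1$ such that
\[
K_n^{-1}\, \ell_{\bdy_-\core(N)}(\gamma) \: \leq \: \ell_{F_{\tau_n}}(\gamma) \: \leq \: K_n \, \ell_{\bdy_-\core(N)}(\gamma)
\]
for every arc $\gamma$ from $p$ to $p$, where lengths are measured relative to the maximal horocusps about $p$.  This is the usual Wolpert-type comparison for lengths under a $K_n$-quasiconformal change of hyperbolic metric, augmented by the standard relative-length convention so that arcs running into cusps are treated on the same footing as closed geodesics.

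Second, I would use discreteness of the arc-length spectrum: on any finite-area hyperbolic surface only finitely many arcs have length below a fixed bound. Let $L$ denote the common length on $\bdy_-\core(N)$ of the finitely many vertices of $\Delta_-(N)$, and let $\delta>0$ be a gap such that every arc from $p$ to $p$ outside $\Delta_-(N)$ has length strictly greater than $L+\delta$ on $\bdy_-\core(N)$. Choose $n$ large enough that $K_n^2 < 1 + \delta/L$ and $(K_n - K_n^{-1}) L < \ln 2$.  Then on $F_{\tau_n}$ every $\alpha_i \in \Delta_-(N)$ has length at most $K_n L$, while every other arc from $p$ to $p$ has length at least $K_n^{-1}(L+\delta) > K_n L$.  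Hence any shortest arc $\beta$ on $F_{\tau_n}$ has length in $[K_n^{-1}L,\, K_n L]$, as do all the $\alpha_i$; both are therefore within $\ln 2$ of the minimum arc length on the non-singular hyperbolic surface $F_{\tau_n}$.  Lemma~\ref{lemma:shortish-disjoint} implies that $\beta$ is disjoint from (or equal to) each $\alpha_i$, giving $d_\arc(\beta,\Delta_-(N)) \leq 1$.

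The main obstacle is the invocation of Theorem~\ref{thm:core-bdy-teich}: producing a Teichm\"uller-convergent sequence of pleated surfaces realizing ideal triangulations (rather than arbitrary measured laminations), and verifying that the convergence in fact yields a uniform bi-Lipschitz comparison of arc lengths relative to cusps. Once this is in hand, the remainder is a short combination of discreteness of the length spectrum with the elementary Lemma~\ref{lemma:shortish-disjoint}, and the same argument applied to $\bdy_+\core(N)$ handles $\Delta_+(N)$.
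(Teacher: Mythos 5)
Your overall skeleton matches the paper's: invoke Theorem~\ref{thm:core-bdy-teich} to approximate $\bdy_-\core(N)$ by surfaces pleated along ideal triangulations, then argue that a shortest arc on a sufficiently good approximating surface is close in $\arc(F,p)$ to $\Delta_-(N)$. (Your worry about producing triangulation-pleated approximants is unfounded --- that is exactly what the appendix theorem supplies.) The genuine gap is your first step: the claimed uniform bi-Lipschitz comparison $K_n^{-1}\,\ell_{\bdy_-\core(N)}(\gamma) \leq \ell_{F_{\tau_n}}(\gamma) \leq K_n\,\ell_{\bdy_-\core(N)}(\gamma)$, with $K_n \to 1$, holding simultaneously for \emph{every} arc $\gamma$ from $p$ to $p$. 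Wolpert's inequality controls lengths of closed geodesics under a $K$--quasiconformal change of metric; it does not directly control horocycle-truncated lengths of arcs, and the truncation you choose (maximal horocusps, which themselves vary with the metric) introduces additive normalization issues that are incompatible with a clean multiplicative statement. Teichm\"uller convergence does give convergence $\ell_{F_{\tau_n}}(\gamma) \to \ell_{\bdy_-\core(N)}(\gamma)$ for each \emph{fixed} arc $\gamma$, but your argument needs control over the arc $\beta_n$ that happens to be shortest on $F_{\tau_n}$, which a priori changes with $n$ and could escape any finite set; without the uniform comparison you cannot transfer its length back to $\bdy_-\core(N)$, and the gap-$\delta$ argument collapses. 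As stated, this step is asserted rather than proved, and it is not a standard citable result.

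The paper circumvents precisely this difficulty with two elementary observations you are missing. First, a shortest arc $b$ on $F_{\tau_k}$ satisfies $\ell_{F_{\tau_k}}(b) \leq 2\ln\abs{6\chi(F)}$ by Lemma~\ref{lemma:length-bound}, and since the pleating map is $1$--Lipschitz this bounds the length of the geodesic representative of $b$ in the $3$--manifold $N$; hence $b$ lies in a single \emph{finite} set $T(N)$ of arcs of bounded length in $N$ (finiteness following from the horoball-shadow packing of Lemma~\ref{lemma:horoball-shadows}), a set that also contains $\Delta_-(N)$. Second, with only finitely many candidate arcs in play, pointwise convergence of their length functions under Teichm\"uller convergence suffices: for $k$ large, lengths of all arcs in $T(N)$ change by less than $\ln(2)/3$, so a shortest arc $b$ on $F_{\tau_k}$ satisfies $\ell_0(b) < \ell_0(a) + 2\ln(2)/3$ on $\bdy_-\core(N)$ for $a \in \Delta_-(N)$, and Lemma~\ref{lemma:shortish-disjoint} (applied on $\bdy_-\core(N)$, not on $F_{\tau_k}$) gives disjointness, hence distance at most $1$. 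If you incorporate the $1$--Lipschitz/finiteness step, your argument can be repaired along these lines; as written, the uniform multiplicative length comparison is the missing load-bearing ingredient.
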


\begin{proof}
Let $R_0 \subset \bdy_- \core(N)$ be a horospherical cusp neighborhood
about puncture $p$, such that $\area(R_0) = 1$. Note that by Lemma
\ref{lemma:pleated-cusp-size}, the neighborhood $R_0$ is embedded. For
any arc $a$ from puncture $p$ to $p$ in $F$, let $\ell_0(a)$ be the
length of the geodesic representing $a$, in the hyperbolic metric of
$\bdy_- \core(N)$, outside the cusp neighborhood $R_0$. Let $\ell_N(a)$ be 
the length of the geodesic representing $a$ in the $3$--manifold $N$, outside a cusp neighborhood of longitude $1$. 
Define the set
\begin{equation}
\label{eq:s-define}
T(N) \: = \: \left\{ a \in \arc^{(0)}(F,p) \st \mbox{both ends of $a$ are at $p$, } \:
\ell_N(a) < 2 \ln \abs{6 \chi(F) }+ \ln(2) \right\}.
\end{equation}

Note that by Lemmas ~\ref{lemma:pleated-cusp} and~\ref{lemma:length-bound}, every arc $a \in \Delta_-(N)$ has length
\[
\ell_N(a) \: \leq \: \ell_0(a) \: \leq \: 2 \ln \abs{6 \chi(F) } .
\]
Thus the simplex $\Delta_-(N)$
of shortest arcs in $\bdy_- \core(N)$ must be contained in
$T(N)$. Also note that $T(N)$ must be a finite set: one way to see
this is to recall (e.g.\ from Lemma~\ref{lemma:horoball-shadows}) 
that every arc of bounded
length makes a definite contribution to $\area (\bdy C \cap \core(N))$.

Now, we apply Theorem~\ref{thm:core-bdy-teich}: there exists a
sequence of triangulations $\tau_i$ of $F$, such that the hyperbolic
metrics on the pleated surfaces $F_{\tau_i}$ converge in the
Teichm\"uller space $\teich(F)$ to $\bdy_- \core(N)$. For each $i$,
let $R_i \subset F_{\tau_i}$ be an embedded cusp neighborhood of area
$1$. For each arc $a \subset F$, let $\ell_i(a)$ be the length of the
geodesic representing $a$, in the induced hyperbolic metric on
$F_{\tau_i}$, relative to the cusp neighborhood $R_i$. Note that
$\ell_N(a) \leq \ell_i(a)$ by Lemma~\ref{lemma:pleated-cusp}.  Then,
because the metrics on $F_{\tau_i}$ converge to that on $\bdy_-
\core(N)$, the length of any arc also converges. In particular,
because $T(N)$ is a finite set of arcs, there is some $k \gg 0$ such
that
\begin{equation}\label{eq:length-change-teich}
\abs{ \ell_k(a) - \ell_0(a) } < \ln(2)/3, \qquad \forall a \in T(N).
\end{equation}

Let $\tau = \tau_k$, and let $b$ be any shortest arc on $F_\tau =
F_{\tau_k}$. By Lemma~\ref{lemma:length-bound}, $\ell_k(b) \leq 2 \ln
\abs{6 \chi(F) }.$ Furthermore, $\ell_N(b) \leq \ell_k(b)$, since the pleating 
map that produces $F_{\tau_k}$ is $1$--Lipschitz.
Then, by equation \eqref{eq:s-define}, it follows that $b \in T(N)$. Thus,
for any arc $a$ that is shortest on $\bdy_- \core(N)$ (that is, $a \in
\Delta_-(N)$), we obtain
\begin{equation}\label{eq:double-change}
\ell_0(b) - \ln(2)/3 \: < \: \ell_k(b) 
   \: \leq \: \ell_k(a) \: < \: \ell_0(a) +  \ln(2)/3,
\end{equation}
where the outer inequalities follow by \eqref{eq:length-change-teich}
and the middle inequality holds because $b$ is shortest on $F_\tau$. A
simpler way to state the conclusion of \eqref{eq:double-change} is
that on the hyperbolic surface $\bdy_- \core(N)$,
\[
\ell_0(b) \: < \: \ell_0(a) + 2 \ln(2)/3.
\]
Since $a$ is shortest on $\bdy_- \core(N)$, and $b$ is nearly
shortest, Lemma~\ref{lemma:shortish-disjoint} implies that $a$
and $b$ are disjoint (or the same arc) for any $a \in \Delta_-(N)$.
Thus $b$ is distance at most $1$ from $\Delta_-(N)$.
\end{proof}

We can now begin proving the lower bound of
Theorem~\ref{thm:main-qf}. Applying the same ideas as in
Section~\ref{sec:lower} gives the following analogue of
Lemma~\ref{lemma:arc-sequence}.

\begin{lemma}
\label{lemma:qf-arc-sequence}
Let $N \cong F \times \RR$ be a cusped quasi-Fuchsian $3$--manifold,
and $p$ a puncture of $F$. There is a sequence $a_0, a_1, \ldots, a_k$
of arcs embedded in $F$, and an associated sequence of simplicial
hyperbolic surfaces $F_{t(a_i)} \subset N$ with at most one singular
point of cone angle $2\pi \leq \theta_t < 4\pi$, such that the
following hold:
\begin{enumerate}
\item 
Each $a_i$ is short on $F_{t(a_i)}$, in the sense of Definition
\ref{def:short-arc}.
\item
\label{item:non-redundant} 
The arcs $a_0, \ldots, a_k$ are distinct up to isotopy.
\item 
$F_{t(a_0)}$ is the lower core boundary $\bdy_- \core(N)$, and
$F_{t(a_k)} = \bdy_+ \core(N)$.
\item 
Each $a_i$ is disjoint from $a_{i-1}$. In other words, $[a_{i-1},
a_i]$ is an edge of $\arc(F,p)$.
\end{enumerate}
\end{lemma}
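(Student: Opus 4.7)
The plan is to adapt the proof of Lemma~\ref{lemma:arc-sequence} by substituting the quasi-Fuchsian sweepout of Proposition~\ref{prop:qf-sweepout} for the equivariant sweepout of Proposition~\ref{prop:sweepout}, and using Proposition~\ref{prop:core-bdy-approx} to bridge between the triangulation-pleated endpoints of that sweepout and the convex core boundaries $\bdy_\pm\core(N)$. First, invoke Proposition~\ref{prop:core-bdy-approx} to obtain ideal triangulations $\tau_-$ and $\tau_+$ of $F$ so that every shortest arc on $F_{\tau_\pm}$ lies at distance at most $1$ from $\Delta_\pm(N)$ in $\arc(F,p)$. Proposition~\ref{prop:qf-sweepout} then produces a geometric sweepout $\Psi \from F \times [0,r] \to N$ with $\Psi_0 = F_{\tau_-}$, $\Psi_r = F_{\tau_+}$, and each intermediate $F_t$ a simplicial hyperbolic surface with at most one cone point of angle in $[2\pi, 4\pi)$.

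For each vertex $a \in \arc^{(0)}(F,p)$, set $S(a) = \{\, t \in [0,r] \st a \text{ is short on } F_t \,\}$. Continuity of the induced metric along the sweepout (Definition~\ref{def:sweepout}) makes $S(a)$ closed, and Lemma~\ref{lemma:cuff} combined with Lemma~\ref{lemma:length-bound} guarantees that every $F_t$ carries at least one short arc, so the family $\{S(a)\}$ covers $[0,r]$. Only finitely many $S(a)$ are nonempty: the horoball packing argument of Lemma~\ref{lemma:horoball-shadows} applies verbatim in the quasi-Fuchsian setting and assigns to every short arc a pair of disjoint disks of fixed radius $\sqrt{2}/(8\pi^2\chi(F)^2)$ on $\bdy C$, with disks for distinct arcs pairwise disjoint; since the geodesic representatives all lie in $\core(N)$, the disks lie in $\bdy C \cap \core(N)$, which has finite area, forcing finiteness.

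With this finite closed cover of $[0,r]$ in hand, Lemma~\ref{lemma:appendix} produces a chain $c_0, c_1, \ldots, c_m$ of vertices of $\arc^{(0)}(F,p)$ and times $t_1, \ldots, t_m$ with $t_i \in S(c_{i-1}) \cap S(c_i)$, such that $c_0$ is short on $F_{\tau_-}$ and $c_m$ is short on $F_{\tau_+}$. At each $t_i$ both $c_{i-1}$ and $c_i$ are short on the common simplicial hyperbolic surface $F_{t_i}$, so part~(3) of Lemma~\ref{lemma:embeddedness} (valid since $\theta_{t_i} < 4\pi$) makes $[c_{i-1}, c_i]$ an edge of $\arc(F,p)$. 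Now prepend an arc $a_0 \in \Delta_-(N)$ disjoint from $c_0$, which exists by the choice of $\tau_-$, and declare $F_{t(a_0)} = \bdy_-\core(N)$; append analogously an arc in $\Delta_+(N)$ with associated surface $\bdy_+\core(N)$. Finally, loop-erase the resulting walk through $\arc^{(1)}(F,p)$ so that no isotopy class is repeated, yielding the desired sequence $a_0, a_1, \ldots, a_k$.

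The main obstacle, and the reason for invoking Proposition~\ref{prop:core-bdy-approx}, is the endpoint matching: the sweepout of Proposition~\ref{prop:qf-sweepout} cannot be directly extended to reach $\bdy_\pm\core(N)$, because those pleated surfaces are realized along laminations rather than ideal triangulations. Proposition~\ref{prop:core-bdy-approx} sidesteps this by guaranteeing that short arcs on $F_{\tau_\pm}$ are already adjacent in the arc complex to arcs of $\Delta_\pm(N)$, so attaching the convex-core boundary costs only a single edge on each side. A minor secondary point is ensuring the finite-cover hypothesis of Lemma~\ref{lemma:appendix}, which is handled by the area bound above and requires no circular use of Theorem~\ref{thm:main-qf}.
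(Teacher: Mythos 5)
Your proposal is correct and follows essentially the same route as the paper's proof: approximate $\bdy_\pm\core(N)$ by triangulation-pleated surfaces via Proposition~\ref{prop:core-bdy-approx}, run the covering/discrete-walk argument of Lemma~\ref{lemma:arc-sequence} along the sweepout of Proposition~\ref{prop:qf-sweepout}, attach arcs of $\Delta_\pm(N)$ at the two ends, and loop-erase. Your finiteness argument (horoball shadows contributing definite area to the compact annulus $\bdy C \cap \core(N)$) is the same mechanism the paper invokes, phrased there as containment in the finite set $T(N)$.
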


\begin{proof}
As in Definition~\ref{def:distance-qf}, let $\Delta_-(N)$ be the
simplex of $\arc(F,p)$ whose vertices are the short arcs on the lower
boundary $\bdy_- \core(N)$. By Proposition~\ref{prop:core-bdy-approx},
there is a triangulation $\tau$ of $F$, and a pleated surface $F_\tau$
pleated along $\tau$, such that a short arc on this pleated surface
either belongs to the simplex $\Delta_-(N)$, or is at distance $1$
from some vertex of $\Delta_-(N)$.  Similarly, there is a
triangulation $\tau'$ of $F$, and a pleated surface $F_{\tau'}$
pleated along $\tau'$, whose short arc either belongs to the simplex
$\Delta_+(N)$, or is at distance $1$ from some vertex of
$\Delta_+(N)$.

By Proposition~\ref{prop:qf-sweepout}, there is a geometric sweepout
$\Psi \from F \times [0, r] \to N$, where each $F_t = \Psi(F \times \{
t\})$ is a hyperbolic cone surface with at most one singular point of
cone angle $2\pi \leq \theta_t < 4\pi$. Furthermore, $F_0 = F_\tau$
and $F_r = F_{\tau'}$, for the given triangulations $\tau$ and
$\tau'$.

Next, we apply the argument of Lemma~\ref{lemma:arc-sequence}. In the
quasi-Fuchsian setting, the proof simplifies in several ways.  There
is no need to worry about equivariance, and the set of arcs $\{a :
S(a) \neq \emptyset \}$ is finite because it is contained in $T(N)$
from equation \eqref{eq:s-define}.  We thus obtain a sequence of arcs
$a_1, \ldots, a_{k-1}$ with the following properties.
\begin{itemize}
\item 
Each $a_i$ is short on some surface $F_{t(a_i)}$ in the sweepout.
\item 
Arc $a_1$ is short on $F_0 = F_\tau$, and $a_{k-1}$ is short on $F_r =
F_{\tau'}$.
\item 
Each $a_i$ is disjoint from $a_{i-1}$. In other words, $[a_{i-1},
a_i]$ is an edge of $\arc(F,p)$.
\end{itemize}

Next, we extend this sequence of arcs to the convex core boundary. If
$a_1 \notin \Delta_-(N)$, then 
there is an arc $a_0 \in \Delta_-(N)$,
which is shortest on $\bdy_- \core(N)$ by definition, and such that
$[a_0, a_1]$ is an edge of $\arc(F,p)$. Otherwise, if $a_1 \in
\Delta_-(N)$, then we simply shift indices by $1$, so that $a_1$
becomes $a_0$. Similarly, if $a_{k-1} \notin \Delta_+(N)$, then we add
an arc $a_k \in \Delta_+(N)$, which is shortest on $\bdy_+
\core(N)$. Otherwise, if $a_{k-1} \in \Delta_+(N)$, then we simply
redefine $k := k-1$, and stop the sequence there.

We now have a sequence of arcs $a_0, \ldots, a_k$, with associated
simplicial hyperbolic surfaces $F_{t(a_i)}$, so that this sequence
satisfies all the conclusions of the lemma except possibly
\eqref{item:non-redundant}. That is, some of the $a_i$ might be in the
same isotopy class.  But if the arcs $a_0, \ldots, a_k$ are the
vertices of a path in $\arc^{(1)}(F,p)$, then some subcollection of
the $a_i$ give an embedded path.  This means that
\eqref{item:non-redundant} is satisfied, and the proof is complete.
\end{proof}

\begin{lemma}\label{lemma:qf-disks}
Let $N \cong F \times \RR$ be a cusped quasi-Fuchsian manifold, and
$p$ a puncture of $F$. Let $C \subset N$ be a horocusp corresponding
to the puncture $p$, whose longitude is $\lambda = 2^{1/4}$. Then, for
some $k \geq d_\arc(N,p)$, the annulus $A = \bdy C$ contains $2k+2$
disjoint disks, each of radius
$$r = \frac{\sqrt{2}}{8 \pi^2 \, \chi(F)^2},$$
such that the center of each disk is in the convex core $\core(N)$.
\end{lemma}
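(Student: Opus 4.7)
The plan is to combine Lemma~\ref{lemma:qf-arc-sequence} with the horoball--shadow construction from Lemma~\ref{lemma:horoball-shadows}. First I would invoke Lemma~\ref{lemma:qf-arc-sequence} to obtain a sequence of arcs $a_0, a_1, \ldots, a_k$, pairwise non-isotopic, with $a_0 \in \Delta_-(N)$ and $a_k \in \Delta_+(N)$, such that consecutive $a_i$ are disjoint and each $a_i$ is short on a simplicial hyperbolic surface $F_{t(a_i)} \subset N$ with at most one singular point of cone angle in $[2\pi, 4\pi)$. Because this sequence is a walk of length $k$ in $\arc^{(1)}(F,p)$ from the simplex $\Delta_-(N)$ to the simplex $\Delta_+(N)$, the definition of $d_\arc(N,p)$ immediately gives $k \geq d_\arc(N,p)$, so there are $k+1$ distinct arcs to work with.

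Next I would repeat the horoball argument of Lemma~\ref{lemma:horoball-shadows} for each $a_i$. Since $A = \bdy C$ has longitude of length $\lambda = 2^{1/4}$, Lemma~\ref{lemma:cuff} provides an equidistant cusp neighborhood in $F_{t(a_i)}$ of area at least $2^{1/4}$, and then Lemma~\ref{lemma:length-bound} bounds the length of the geodesic representative of $a_i$ in $F_{t(a_i)}$ by $2\ln\abs{2\pi\chi(F)/2^{1/4}}$. A simplicial hyperbolic map is $1$--Lipschitz, so the geodesic $g_i \subset N$ in the homotopy class of $a_i$ satisfies the same length bound. Lifting $N$ to $\HH^3$ so that $A$ lifts to a horosphere at Euclidean height $1$, one lift of $g_i$ is a vertical segment descending onto a horoball $H_{a_i}$ of diameter at least $\sqrt{2}/(4\pi^2\chi(F)^2)$, and the rank-one parabolic subgroup $\pi_1(A) = \ZZ$ translates $H_{a_i}$ to a second horoball $H'_{a_i}$ of the same diameter in a distinct $\pi_1(A)$--orbit (as in Lemma~\ref{lemma:horoball-shadows}, the two lifts of $g_i$ are oriented oppositely relative to the cusp). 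Shrinking all these horoballs to the common diameter $\sqrt{2}/(4\pi^2\chi(F)^2)$, horoballs arising from distinct isotopy classes $a_i \neq a_j$ lie in distinct $\pi_1(A)$--orbits, and therefore produce $2(k+1) = 2k+2$ pairwise disjoint shadow disks on $A$ of radius $r = \sqrt{2}/(8\pi^2\chi(F)^2)$.

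Finally, I would verify that each disk is centered at a point of $\core(N) \cap A$. The center of each shadow disk is a point where $g_i$ meets $\bdy C$. Both endpoints of $g_i$ enter the cusp $C$, which is contained in $\core(N)$ since rank-one cusps of $N$ correspond to parabolic fixed points in the limit set $\Lambda(\Gamma)$. Hence a lift $\widetilde{g_i} \subset \HH^3$ has both ideal endpoints in $\Lambda(\Gamma)$, so $\widetilde{g_i}$ lies in the convex hull of $\Lambda(\Gamma)$; consequently $g_i \subset \core(N)$ and its crossings with $\bdy C$ lie in $\core(N) \cap A$. The only real thing to check carefully is that the horoball-orbit disjointness argument of Lemma~\ref{lemma:horoball-shadows}, originally written in the infinite cyclic cover of a fibered manifold, transfers intact to the quasi-Fuchsian setting; since that argument is entirely local to the rank-one cusp and its parabolic subgroup, nothing needs to change.
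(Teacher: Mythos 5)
Your proposal is correct and follows essentially the same route as the paper: combine Lemma~\ref{lemma:qf-arc-sequence} (giving $k \geq d_\arc(N,p)$ distinct arcs) with the horoball--shadow construction of Lemma~\ref{lemma:horoball-shadows}, which indeed transfers to the rank-one cusp of the quasi-Fuchsian manifold, and note that each geodesic $g_{a_i}$ lies in $\core(N)$ so the disk centers do too. One inessential slip: the horocusp $C$ need not be contained in $\core(N)$ (which is why the paper works with $\bdy C \cap \core(N)$), but your actual justification --- that the lifts of $g_{a_i}$ have ideal endpoints at parabolic fixed points in $\Lambda(\Gamma)$, hence lie in the convex hull --- is exactly what is needed.
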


Recall that by Lemma~\ref{lemma:waist}, there is indeed an embedded
horocusp of longitude $\lambda = 2^{1/4}$. Recall as well, from
Definition~\ref{def:distance-qf}, that $d_\arc(N,p)$ is defined to be
the shortest distance in $\arc(F,p)$ between a vertex of $\Delta_-(N)$
and a vertex of $\Delta_+(N)$.

\begin{proof}[Proof of Lemma~\ref{lemma:qf-disks}]
The sequence of arcs $a_0, \ldots, a_k$, constructed in Lemma
\ref{lemma:qf-arc-sequence}, is a walk through the $1$--skeleton of
$\arc(F,p)$ from a vertex of $\Delta_-(N)$ to a vertex of
$\Delta_+(N)$. Thus, by Definition~\ref{def:distance-qf}, $k \geq
d_\arc(N,p)$.

For each $i \in \{0, \ldots, k \}$, let $g_{a_i}$ be the geodesic in
$N$ in the homotopy class of $a_i$. Then, Lemma
\ref{lemma:horoball-shadows} guarantees that there is a pair of
disjoint disks $D_{a_i}$ and $D'_{a_i}$, of radius $r = \sqrt{2} / 8
\pi^2 \, \chi(F)^2$, whose centers are the endpoints of $g_{a_i}$ on
the cusp annulus $A$.  Since the geodesic $g_{a_i}$ is contained in
the convex core of $N$, the centers of $D_{a_i}$ and $D'_{a_i}$ lie in
$\core(N)$ as well.

Finally, Lemma~\ref{lemma:horoball-shadows} also implies that if $j
\neq i$, the disks of $a_i$ are disjoint from those of $a_j$. Thus we
have at least $2k+2$ disks in total.
\end{proof}

\begin{proof}[Proof of Theorem~\ref{thm:main-qf}, lower bound]
Let $p$ be a puncture of $F$, and let $C \subset N$ be a horospherical
cusp corresponding to the puncture $p$, whose longitude is $\lambda =
2^{1/4}$. Note that by Lemma~\ref{lemma:waist}, $C$ is contained in
the maximal cusp about puncture $p$. Thus lower bounds on the area and
height of $\bdy C \cap \core(N)$ also apply to the maximal cusp.

As in Section~\ref{sec:upper} and~\ref{sec:upper-qf}, we may place
Euclidean coordinates on the annulus $A = \bdy C \cong S^1 \times
\RR$, in which the $\RR$ direction is vertical. Let $B \subset A$ be a
compact annular band, with boundary consisting of horizontal circles,
which is the smallest such band that contains all of the $2k+2$ disks
of Lemma~\ref{lemma:qf-disks}. Since each disk has radius $r =
\sqrt{2} / 8 \pi^2 \, \chi(F)^2$, B\"or\"oczky's estimate on the
density of a circle packing~\cite[Theorem 1]{boroczky} implies that
$$\area(B) \: \geq \: 
\frac{2 \sqrt{3}}{\pi} \cdot (2d_{\arc} (N) + 2) \cdot \pi \left(  \frac{\sqrt{2}}{ 8 \pi^2 \, \chi(F)^2} \right)^{\!\! 2}
\: = \:  \frac{ \sqrt{3} \, (d_{\arc} (N) + 1) }{8 \pi^4 \, \chi(F)^4} 
\: > \: \frac{ d_{\arc} (N) + 1 }{450 \, \chi(F)^4} \, .
$$ 
Similarly, since $\area(B) = \lambda \cdot \height(B)$, and we have
normalized the horocusp so that $\lambda = 2^{1/4}$, we have
$$\height(B)
\: \geq \:  \frac{ \sqrt{3} \, (d_{\arc} (N) + 1) }{2^{1/4} \, 8 \pi^4 \, \chi(F)^4} 
\: > \: \frac{ (d_{\arc} (N) + 1) }{536 \, \chi(F)^4} \, .
$$

To complete the proof, it remains is to bound the difference in area
(or height) between $B$ and $\bdy C \cap \core(N)$. Note that by Lemma
\ref{lemma:qf-disks}, each of the $(2k+2)$ disks has its center inside
$\core(N)$. Therefore, the upper boundary of $B$ is at most $r =
\sqrt{2} / 8 \pi^2 \, \chi(F)^2$ higher than $\bdy_+ \core(N)$, and
the lower boundary of $B$ is at most $r$ lower than $\bdy_-
\core(N)$. This implies that
\[
\area(\bdy C \cap \core(N)) \: \geq \:  \area(B) - 2 \lambda r
\: > \: \frac{ d_{\arc} (N) }{450 \, \chi(F)^4} -  \frac{2 \cdot 2^{3/4} }{8 \pi^2 \, \chi(F)^2}
\: > \: \frac{ d_{\arc} (N)  }{450 \, \chi(F)^4} - \frac{1}{23 \, \chi(F)^2}.
\]
Similarly, 
\[
\height(\bdy C \cap \core(N)) \: \geq \:  \height(B) - 2 r
\: > \: \frac{ d_{\arc} (N)  }{536 \, \chi(F)^4} - \frac{1}{27 \, \chi(F)^2}.
\qedhere
\]
\end{proof}


\section{Covers and the arc complex}\label{sec:covers}

In this section, we will apply Theorem~\ref{thm:main-qf} to prove Theorem~\ref{thm:lifting}, which relates the arc complex of a surface $S$ to that of its cover $\Sigma$. The proof uses some classical results in Kleinian groups to construct a quasi-Fuchsian manifold with prescribed short arcs on its convex core boundary (see Lemma~\ref{lemma:prescribed-qf}). We begin by recalling some terminology and results, while pointing the reader to Marden~\cite[Chapter 3]{marden:book} for a more detailed reference.

Let $\Gamma$ be a Kleinian group with limit set $\Lambda$. The \emph{domain of discontinuity} is $\Omega = \bdy \HH^3 \setminus \Lambda$. When $N = \HH^3 / \Gamma$ is quasi-Fuchsian, $\Omega$ is the disjoint union of two open disks $\Omega_+$ and $\Omega_-$, each of which admits a conformal, properly discontinuous action by $\Gamma$. The quotients $S_\pm = \Omega_\pm / \Gamma$ are Riemann surfaces, called  the (top and bottom)  \emph{conformal boundary} of $N = \HH^3 / \Gamma$.


For each quasi-Fuchsian manifold $N$, there is a natural ``nearest point retraction'' map $r \from S_\pm \to \bdy_\pm \core(N)$. Sullivan proved that, if $S_\pm$ is given the unique hyperbolic metric in its conformal class, the map $r \from S_\pm \to \bdy_\pm \core(N)$ is $K$--Lipschitz, for a universal constant $K > 1$. Much more recently, Epstein, Marden, and Markovic~\cite{emm:quasiconformal} showed that the optimal Lipschitz constant is $2$. We will use their result for concreteness, while emphasizing that Sullivan's original $K$--Lipschitz statement is all that is truly needed.

We also recall some facts from the geometry of surfaces. Let $\eps_2$ be the $2$--dimensional Margulis constant. For any simple closed geodesic $\gamma$ in a hyperbolic surface $S$, of length $\ell = \ell(\gamma) < \eps_2$, the $\eps_2$--thin region of $S$ containing $\gamma$ is an embedded collar of radius $r(\ell)$. The function $r(\ell)$ is monotonically decreasing, and $r(\ell) \to \infty$ as $\ell \to 0$. See Buser~\cite{buser:geometry-spectra}  for explicit estimates on $\eps_2$ and $r(\ell)$.

\begin{define}\label{def:suff-thin}
Let $S$ be a hyperbolic surface, and $\gamma$ a simple closed geodesic on $S$. We say that $\gamma$ is \emph{sufficiently thin} if its length $\ell = \ell(\gamma)$ is short enough that the $\eps_2$--thin collar about $\gamma$ has radius
\begin{equation}\label{eq:thin}
r(\ell) \: > \: \ln \abs{ 6 \chi(S) / \eps_2 }.
\end{equation}
This requirement on collar radius is motivated by Lemma~\ref{lemma:length-bound}.
\end{define}


Given this background, we can prove the following constructive lemma.

\begin{lemma}\label{lemma:prescribed-qf}
Let $F$ be a surface with a preferred puncture $p$, and let $a_-, a_+ \in \arc^{(0)}(F,p)$ be arcs from $p$ to $p$. Then there exists a quasi-Fuchsian manifold $N \cong F \times \RR$, such that  $a_\pm$ is the unique shortest arc from $p$ to $p$ on $\bdy_\pm \core(N)$.
%
\end{lemma}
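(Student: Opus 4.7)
Plan: I would prove Lemma~\ref{lemma:prescribed-qf} using Bers' simultaneous uniformization combined with an explicit Fenchel--Nielsen construction on the boundary hyperbolic structures.

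For each of $a_\pm$, I would first build a hyperbolic structure $X_\pm$ on $F$ in which $a_\pm$ is the unique shortest arc from $p$ to $p$, with a large length gap to all competitors. The starting observation is that a regular neighborhood of $a_\pm$ together with a horocyclic neighborhood of $p$ is a pair of pants $P_\pm \subset F$ with one cusp at $p$, and the boundary of $P_\pm$ in $F$ consists of one or two essential non-peripheral simple closed curves forming a multicurve $\Gamma_\pm$. Moreover, $a_\pm$ is the unique vertex of $\arc^{(0)}(F, p)$ representable by an arc contained in $P_\pm$. I would extend $\Gamma_\pm$ to a pants decomposition of $F$, and then in Fenchel--Nielsen coordinates prescribe each component of $\Gamma_\pm$ to have length $\eps$, while fixing the remaining coordinates independently of $\eps$. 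For $\eps$ sufficiently small, every component of $\Gamma_\pm$ is sufficiently thin in the sense of Definition~\ref{def:suff-thin}, so every essential arc from $p$ to $p$ crossing $\Gamma_\pm$ must traverse a collar of radius exceeding $\ln \abs{6 \chi(F) / \eps_2}$. The arc $a_\pm$ is contained in $P_\pm$ and its length on $X_\pm$ is bounded independently of $\eps$ once the remaining Fenchel--Nielsen parameters governing $P_\pm$ are fixed. Choosing $\eps$ small enough that the collar radius exceeds this bound ensures $a_\pm$ is the unique shortest arc on $X_\pm$.

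I would then apply Bers' simultaneous uniformization to the pair $(X_-, X_+)$ to obtain a quasi-Fuchsian manifold $N \cong F \times \RR$ whose top and bottom conformal boundaries are $X_+$ and $X_-$ respectively. It remains to transfer the shortest-arc property from $X_\pm$ to $\bdy_\pm \core(N)$, for which I would invoke the Epstein--Marden--Markovic theorem that the nearest-point retraction $r \from X_\pm \to \bdy_\pm \core(N)$ is $2$--Lipschitz. The upper bound is immediate: the geodesic representative of $a_\pm$ on $\bdy_\pm \core(N)$ has length at most $2 \ell_{X_\pm}(a_\pm)$. For the lower bound on competing arcs, the retraction also gives $\ell_{\bdy_\pm \core}(\gamma) \leq 2 \eps$ for each component $\gamma$ of $\Gamma_\pm$, so $\Gamma_\pm$ remains sufficiently thin on $\bdy_\pm \core(N)$ provided $\eps$ was chosen small enough from the outset. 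Any arc in $\arc^{(0)}(F,p) \setminus \{ a_\pm \}$ must still cross $\Gamma_\pm$ on $\bdy_\pm \core(N)$, and hence has length there exceeding $2 \ln \abs{6 \chi(F) / \eps_2}$, which can be arranged to be strictly greater than $2 \ell_{X_\pm}(a_\pm)$. This makes $a_\pm$ the unique shortest arc on $\bdy_\pm \core(N)$.

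The main obstacle is that the $2$--Lipschitz retraction yields length comparisons only in the ``downward'' direction from $X_\pm$ to $\bdy_\pm \core(N)$, never the reverse, so one cannot directly conclude that an arc is long on the convex core boundary from its being long on the conformal boundary. I would circumvent this asymmetry by arranging that both inequalities needed for uniqueness are witnessed ``downward'' from $X_\pm$: the upper bound on $\ell_{\bdy_\pm \core}(a_\pm)$ via direct application of the retraction to $a_\pm$, and the lower bound on competing arcs via the thin-collar structure of $\Gamma_\pm$ on $\bdy_\pm \core(N)$, which persists because $\Gamma_\pm$ remains short there.
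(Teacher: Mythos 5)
Your proposal follows essentially the same route as the paper: a pants neighborhood of the cusp together with $a_\pm$, hyperbolic structures $X_\pm$ making the frontier curves sufficiently thin, Bers simultaneous uniformization, and the Epstein--Marden--Markovic $2$--Lipschitz nearest-point retraction to keep those curves thin on $\bdy_\pm \core(N)$, so that any competing arc from $p$ to $p$ must traverse a wide collar. The only deviation is how you certify that $a_\pm$ itself is short on $\bdy_\pm \core(N)$: you push $a_\pm$ down through the $2$--Lipschitz map, which needs a little extra care because arc lengths are measured after truncation at cusp neighborhoods and the retraction does not immediately compare horocycle levels, whereas the paper sidesteps this by applying Lemma~\ref{lemma:length-bound} intrinsically on $\bdy_\pm \core(N)$ to produce some short arc and then noting that, being too short to cross the thin collars, it can only be $a_\pm$.
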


\begin{proof}
Let $R_0 \subset F$ be a neighborhood of the puncture $p$, and let $R(a_+)$ be a regular neighborhood of $R_0 \cup a_+$. This is topologically a pair of pants, whose frontier in $F$ consists of a pair of essential closed curves $c_+ ,c'_+$. (If $F$ is a once-punctured torus, then $c_+$ is isotopic to $c'_+$; this will not affect our arguments.)  Similarly, let $c_+ ,c'_+$ be closed curves that form the frontier of a regular neighborhood $R(a_-)$.

Choose hyperbolic metrics $X_\pm$ on $F$, in which the geodesic representatives  of $c_\pm$ and $c'_\pm$ have less than half the length required to be sufficiently thin. By Bers simultaneous uniformization ~\cite[page 136]{marden:book}, there is a quasi-Fuchsian manifold $N \cong F \times \RR$ whose top conformal boundary is $X_+$ and whose bottom conformal boundary is $X_-$. Thus, by Epstein, Marden, and Markovic~\cite{emm:quasiconformal}, the geodesic representatives of $c_\pm$ and $c'_\pm$ are sufficiently thin on $\bdy_\pm \core(N)$.

We claim that $a_+$ is the unique shortest arc from $p$ to $p$ on $\bdy_+ \core(N)$. For concreteness, we will measure lengths relative to the horospherical cusp neighborhood $Q(p) \subset \bdy_+ \core(N)$ whose boundary has length $\eps_2$. Since horoballs are convex, it follows that $Q(p)$ is $\eps_2$--thin. Then, by Lemma~\ref{lemma:length-bound}, 
there must be a geodesic $\alpha_+$ from $p$ to $p$ whose length relative to this cusp neighborhood satisfies 
$$\ell(\alpha_+) \: \leq \: 2 \ln \abs{ 6 \chi(S) / \eps_2 }.$$
Now, let $b_+$ be any arc from $p$ to $p$, other than $a_+$. Since $b_+$ is not isotopic into the neighborhood $R(a_+)$, it must cross $c_+ \cup c'_+$. 
By the Margulis lemma, the $\eps_2$--thin collars about those curves
are disjoint from the $\eps_2$--thin cusp neighborhood $Q(p)$. Thus,
since $c_+$ and $c'_+$ are sufficiently thin, and the width of a
collar is twice the radius, equation \eqref{eq:thin} implies that
$\ell(b_+) > \ell(\alpha_+)$. Therefore $a_+ = \alpha_+$, the unique
shortest arc from $p$ to $p$.

By the same argument, $a_-$ is the unique short arc on $\bdy_- \core(N)$.
\end{proof}

We can now complete the proof of Theorem~\ref{thm:lifting}, which we restate.

\begin{named}{Theorem~\ref{thm:lifting}}
Let $\Sigma$ and $S$ be surfaces with one puncture, and $f \from
\Sigma \to S$ a covering map of degree $n$.  Let $\pi \from \arc(S)
\to \arc(\Sigma)$ be the lifting relation induced by $f$. Then, for
all $a,b \in \arc^{(0)}(S)$, we have
\[
\frac{d(a,b)}{4050 \, n \, \chi(S)^6} \,  - 2 \: < \: d(\alpha, \beta) \: \leq \: d(a, b)
\]
where $\alpha \in \pi(a)$ and $\beta \in \pi(b)$.
\end{named}

\begin{proof} 
Recall that by Definition~\ref{def:lifting-map}, $\pi(a)$ is the set
of $n$ vertices in $\arc(\Sigma)$ representing arcs that project to
$a$. These vertices form a simplex in $\arc(\Sigma)$, since the arcs
that comprise $f^{-1}(a)$ are disjoint.  Similarly, if $a$ and $a'$
are distance $1$ in $\arc(S)$, then all the $2n$ lifts of $a$ and $a'$
are disjoint in $\Sigma$, hence every vertex $\alpha \in \pi(a)$ is
distance $1$ in $\arc(\Sigma)$ from every vertex $\alpha' \in
\pi(a')$.  Thus, by induction on distance, we have 
\[
d(\alpha, \beta) \, \leq \, d(a, b)
\]
for any $\alpha \in \pi(a)$ and any $\beta \in \pi(b)$.

To prove the other inequality in Theorem~\ref{thm:lifting}, assume the
cover is non-trivial: that is, $n > 1$.  Fix $a, b \in
\arc^{(0)}(\Sigma)$.  By Lemma~\ref{lemma:prescribed-qf}, there is a
quasi-Fuchsian manifold $M \cong S \times \RR$, such that $a$ is the
unique shortest arc on $\bdy_- \core(M)$ and $b$ is the unique
shortest arc on $\bdy_+ \core(M)$.

Since $S$ has a unique puncture $p$, we have $\arc(S) = \arc(S,p)$.
Let $C \subset M$ be the maximal horospherical cusp corresponding to
this unique puncture. By Definition~\ref{def:distance-qf},
$d_\arc(M,p) = d(a,b)$.  Thus, by the lower bound of
Theorem~\ref{thm:main-qf},
\begin{equation}\label{eq:lower-m} 
\frac{d(a,b)}{450 \, \chi(S)^4} - \frac{1}{23 \, \chi(S)^2} 
   \: < \: \area( \bdy C \cap \core(M)).
\end{equation}

Use the $n$--fold covering map $f \from \Sigma \to S$ to lift the
hyperbolic metric on $M \cong S \times \RR$ to a quasi-Fuchsian
structure on $N \cong \Sigma \times \RR$.  The convex core of $N$
covers the convex core of $M$.  The horocusp $C \subset M$ lifts to a
horocusp $D \subset N$, which is an $n$--sheeted cover of $C$. Thus
\begin{equation}\label{eq:m-n}
n \cdot \area (\bdy C \cap \core(M)) \: = \:  \area(\bdy D \cap \core(N)).
\end{equation}

Observe that every arc $a \in \bdy_+ \core(M)$ lifts to $n$ disjoint
arcs in $\bdy_+ \core(N)$, each of which has the same length as $a$
(outside the horocusps $C$ and $D$, respectively).  Thus the $n$ arcs
of $\pi(b)$ are shortest on $\bdy_+ \core(N)$, and similarly the $n$
arcs of $\pi(a)$ are shortest on $\bdy_- \core(N)$.  By
Definition~\ref{def:distance-qf}, this implies $d_\arc(N,p) \leq
d(\alpha, \beta)$ for any $\alpha \in \pi(a)$ and any $\beta \in
\pi(b)$.  Therefore, the upper bound of Theorem~\ref{thm:main-qf}
implies
\begin{equation}\label{eq:upper-n}
\area( \bdy D \cap \core(N)) \: < \:  9 \, \chi(\Sigma)^2 \, d(\alpha,\beta) + 
      \Big| 12 \chi(\Sigma)   \ln \abs{ \chi(\Sigma) } + 26 \chi(\Sigma) \Big|.
\end{equation}

Combining equations \eqref{eq:lower-m}, \eqref{eq:m-n} and
\eqref{eq:upper-n}, we obtain
\begin{equation*}
 \frac{n \cdot d(a, b) }{450 \,  \chi(S)^4} - \frac{n}{23  \,  \chi(S)^2} \: < \:  9 \, \chi(\Sigma)^2 \, d(\alpha,\beta) + 
      \Big| 12 \chi(\Sigma)   \ln \abs{ \chi(\Sigma) } + 26 \chi(\Sigma) \Big|,
\end{equation*}
which can be rearranged, using $\chi(\Sigma) = n \chi(S)$, to give
\begin{equation*}
\frac{d(a, b)  }{4050 \, n \, \chi(S)^6} - \frac{1}{23 \cdot 9 \, n \,  \chi(S)^4} -  \frac{\Big| 12 \chi(\Sigma)   \ln \abs{ \chi(\Sigma) } + 26 \chi(\Sigma) \Big|  } {9 \chi(\Sigma)^2} \: < \: d(\alpha, \beta).
\end{equation*}
Since $\Sigma$ is a once-punctured surface that non-trivially covers
$S$, we have $\abs{\chi(\Sigma)} \geq 3$. It follows that the additive
error on the left-hand side is bounded above by 2.  This completes the
proof. 
\end{proof}


\appendix

\section{Approximating the convex core boundary}

The goal of this appendix is to write down a proof of the following
result, which is needed in the argument of Section~\ref{sec:lower-qf}.

\begin{theorem}\label{thm:core-bdy-teich}
Let $N \cong F \times \RR$ be a cusped quasi-Fuchsian
$3$--manifold. Then there is a sequence $\tau_i$ of ideal
triangulations of $F$, such that the induced hyperbolic metrics on the
pleated surfaces $F_{\tau_i}$ converge in the Teichm\"uller space
$\teich(F)$ to the hyperbolic metric on the lower core boundary
$\bdy_- \core(N)$. Furthermore, the pleating maps for the $F_{\tau_i}$
converge in the compact--open topology to a pleating map for $\bdy_-
\core(N)$.

The same statement holds for the upper core boundary $\bdy_+ \core(N)$.
\end{theorem}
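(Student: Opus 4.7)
The approach is to approximate the bending lamination of $\bdy_-\core(N)$ by ideal triangulations, realize each as a pleated surface in $N$ via Proposition~\ref{prop:pleating}, and then use a compactness argument together with a uniqueness step to identify the limit pleated surface with $\bdy_-\core(N)$.

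Let $f_- \from F \to N$ be the pleating map whose image is $\bdy_-\core(N)$, with bending lamination $\lamm_-$ measured on the intrinsic hyperbolic surface $F$. First I would build a sequence of ideal triangulations $\tau_i$ of $F$ whose $1$--skeletons Hausdorff-converge to $\lamm_-$. This is a standard density statement: approximate $\lamm_-$ by finite geodesic sublaminations (long compact arcs closely shadowing its leaves, together with any closed leaves), and then complete these finite collections to ideal triangulations using the connectedness results for the arc complex quoted in Definition~\ref{def:arc-complex}. By Proposition~\ref{prop:pleating} each $\tau_i$ is realized by a pleating map $f_i \from F \to N$ properly homotopic to $f_-$, and by Lemma~\ref{Lem:Pleated} each image $F_{\tau_i} = f_i(F)$ lies in $\core(N)$.

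The next step is to extract a convergent subsequence. Pleated surfaces in a fixed hyperbolic $3$--manifold have uniformly bounded geometry outside their thin parts (a consequence of the Margulis lemma, as in Canary--Epstein--Green), and the cusp estimates of Section~\ref{sec:cone-surface} control their behavior inside the cusps. Combining these yields uniform control on the induced hyperbolic metrics $g_i$ on $F$ and on the maps $f_i$, so after passing to a subsequence $g_i$ converges in $\teich(F)$ to some $g_\infty$, and $f_i$ converges in the compact-open topology to a proper map $f_\infty \from F \to N$. Because each $f_i$ sends the leaves of $\tau_i$ to hyperbolic geodesics and $\tau_i \to \lamm_-$ in the Hausdorff topology, the limit $f_\infty$ sends every leaf of $\lamm_-$ to a geodesic of $N$, and is totally geodesic on each component of $F \setminus \lamm_-$ since those components are Hausdorff-limits of the totally geodesic triangles of $F \setminus \tau_i$. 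Thus $f_\infty$ is itself a pleating map realizing $\lamm_-$.

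The hard part will be Step~5: pinning down $f_\infty = f_-$ rather than some other pleating realizing $\lamm_-$ (for instance $\bdy_+\core(N)$ in the special case that $\lamm_-$ is the bending lamination on both sides, or an interior pleating realizing a reparametrization of $\lamm_-$). To distinguish $\bdy_-\core(N)$ from the other candidates, I would exploit the convex-core structure: every $F_{\tau_i}$ lies in $\core(N)$, and by choosing the approximating triangulations so that many of their leaves lie in a prescribed small neighborhood of $\bdy_-\core(N)$, one forces the homotopy from $f_i$ to $f_-$ to occur in a region of $\core(N)$ that collapses onto $\bdy_-\core(N)$ as $i \to \infty$. This is the one place in the paper where one must reach into the non-elementary Kleinian-group toolbox, invoking continuity of the bending data of a quasi-Fuchsian manifold (in the spirit of Bonahon and Keen--Series) to conclude that the unique pleating realizing $\lamm_-$ on the lower side of the core is $f_-$ itself. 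Because every subsequential limit then agrees with $f_-$, the full sequence $f_i$ converges, yielding both Teichm\"uller convergence $g_i \to g_-$ and compact-open convergence $f_i \to f_-$, as required. The analogous argument with $\lamm_+$ gives the statement for the upper core boundary.
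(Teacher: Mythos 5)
Your overall strategy (approximate the bending lamination by ideal triangulations, realize them as pleated surfaces via Proposition~\ref{prop:pleating}, extract a limit by compactness, and identify the limit with the core boundary) is the same as the paper's, but two of your steps have genuine gaps. The first is the approximation step: a sequence of ideal triangulations can never Chabauty/Hausdorff-converge to the compact lamination $\lamm_-$ itself, since every $\tau_i$ has edges running out the cusps and any Chabauty limit must therefore contain non-compact leaves (typically spiralling onto $\lamm_-$). The best one can arrange is $\tau_i \to L'$ with $\lamm_- \subset L'$; this is precisely the content of Fact~\ref{Fac:OverConverge} and Lemma~\ref{lemma:OverConverge}, and the paper builds the triangulations not by ``completing compact sub-arcs of leaves'' (which are not ideal arcs and cannot be edges of a triangulation) but from density of simple closed curves in $\PML$ together with the twisting construction of Lemma~\ref{Lem:Twist} and a diagonal argument. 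Because the limit lamination is strictly larger than $\lamm_-$, your claim that $f_\infty$ is totally geodesic on each component of $F \setminus \lamm_-$ ``since those components are Hausdorff-limits of the totally geodesic triangles'' does not follow: a complementary component of $\lamm_-$ is a limit of a union of several triangles separated by leaves of $L' \setminus \lamm_-$, and a priori the limit map could bend along those extra leaves.

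The second gap is the identification $f_\infty = f_-$, which you correctly flag as the hard part but do not actually close. Continuity of bending data in the sense of Bonahon or Keen--Series concerns how the bending measure varies as the quasi-Fuchsian structure varies, whereas here the manifold $N$ is fixed and one must identify a limit of pleated maps inside $N$; and the plan of choosing triangulations whose leaves lie in a small neighborhood of $\bdy_-\core(N)$ so that the homotopies ``collapse'' onto the boundary is essentially asserting the desired convergence without supplying a mechanism for it. The paper closes both gaps simultaneously in Lemma~\ref{lemma:teich-converge}: the core boundary map $f_-$ realizes not only $\lamm_-$ but also the enlarged limit $L'$ (the extra leaves land in its totally geodesic pieces); compactness of marked pleated surfaces meeting a fixed compact subset of $\core(N)$ gives subsequential limits of the $f_i$; Ohshika's lemma shows any such limit realizes $L'$; and essential uniqueness of the realization of $L'$ in the homotopy class forces every subsequential limit to equal $f_-$, so the whole sequence converges in $\teich(F)$ and in the compact--open topology. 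Without a substitute for this uniqueness argument, your proof does not pin down the limit surface, so as written it is incomplete.
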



The statement of Theorem~\ref{thm:core-bdy-teich} is entirely
unsurprising, and morally it should fit into the toolbox of well-known
results about laminations and pleated surfaces~\cite[Chapters 4 and
5]{ceg:notes-on-notes}. Indeed, the standard toolbox of Kleinian group
theory leads to a relatively quick proof of the theorem.  However,
this short proof is also somewhat technical, as it requires passing
between several different topologies on spaces of laminations and
pleated surfaces.


The main reference for the following argument is Canary, Epstein, and
Green~\cite{ceg:notes-on-notes}.  See also Thurston
\cite{thurston:notes, thurston:geometry-dynamics}, Bonahon
\cite{bonahon:lamination}, and Ohshika~\cite{Ohshika05}.

\begin{define}\label{def:gl}
Let $S$ be a punctured hyperbolic surface of finite area. Let $\GL(S)$
denote the set of geodesic laminations on $S$: that is, laminations
where each leaf is a geodesic. We equip $\GL(S)$ with the
\emph{Chabauty topology}.  In this topology, a sequence $\{L_i \} $
converges to $L \in \GL(S)$ if and only if:
\begin{enumerate}
\item 
If a subsequence $x_{n_i} \in L_{n_i}$ converges to $x \in S$, then $x
\in L$.
\item 
For all $x \in L$, there exists a sequence $x_i \in L_i$, such that
$x_i \to x$.
\end{enumerate} 
The Chabauty topology is metrizable
\cite[Proposition~3.1.2]{ceg:notes-on-notes}. In fact, when restricted
to compact laminations, the Chabauty topology reduces to be the
Hausdorff topology (induced by the Hausdorff distance between compact
sets).  See~\cite[Section 3.1]{ceg:notes-on-notes} for more details.
\end{define}

Definition~\ref{def:gl} makes use of a hyperbolic metric on $S$, but
in an inessential way. If we modify a metric $d$ to a new hyperbolic
metric $d'$, each lamination $L \in \GL(S)$ that is geodesic in $d$
can be straightened to a geodesic lamination of $d'$. This
straightening does not affect convergence of laminations. Thus the
space $\GL(S)$ only depends on the topology of $S$.

We use the term \emph{curve} to denote a simple closed geodesic in
$S$. The following lemma is a good example of convergence in the
Chabauty topology.

\begin{lemma}
\label{Lem:Twist}
For every curve $\alpha \subset S$, there is a sequence of ideal
triangulations $\tau_i$, converging in the Chabauty topology to a
lamination $\alpha' \supset \alpha$.
\end{lemma}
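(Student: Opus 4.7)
The plan is to construct $\tau_i$ by repeatedly Dehn twisting a fixed geodesic ideal triangulation about $\alpha$ and then extract a convergent subsequence using the compactness of $\GL(S)$.

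First I would fix a geodesic ideal triangulation $\tau_0$ of $S$. Its complementary regions are ideal triangles, so its edges fill $S$, and in particular at least one edge $e$ of $\tau_0$ meets $\alpha$ transversely. Let $T_\alpha \from S \to S$ denote the Dehn twist about $\alpha$, set $\tau_i := T_\alpha^i(\tau_0)$, and straighten each edge to its unique geodesic representative. Each $\tau_i$ is an ideal triangulation of $S$, and the union of its geodesic edges is a geodesic lamination. Since $S$ has finite area, $\GL(S)$ is compact in the Chabauty topology (see~\cite{ceg:notes-on-notes}), so after passing to a subsequence I may assume $\tau_i \to \alpha'$ for some $\alpha' \in \GL(S)$. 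The remaining task is to verify $\alpha \subset \alpha'$.

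To do so, I would lift to the universal cover $\HH^2$. Let $\tilde\alpha$ be a lift of $\alpha$, with ideal endpoints $p^\pm \in \bdy\HH^2$, and let $g$ be the deck transformation whose axis is $\tilde\alpha$. Choose a lift $\tilde e$ of $e$ meeting $\tilde\alpha$ transversely, with ideal endpoints $x_0, x_1 \in \bdy\HH^2$ separated by $\{p^+, p^-\}$. The Dehn twist $T_\alpha$ admits an equivariant lift to $\HH^2$ that is the identity on one side of $\tilde\alpha$ and agrees with $g$ on the other, so a suitable lift of $T_\alpha^i(e)$ is the geodesic $\tilde\gamma_i$ from $x_0$ to $g^i(x_1)$. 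Since $x_1 \neq p^\pm$, the point $g^i(x_1)$ converges to one of the fixed points of $g$, say $p^+$. Consequently $\tilde\gamma_i$ converges, uniformly on compact subsets of $\HH^2$, to the geodesic from $x_0$ to $p^+$, which shares the endpoint $p^+$ with $\tilde\alpha$ and is therefore asymptotic to $\tilde\alpha$. Thus the $\tilde\gamma_i$ fellow-travel arbitrarily long compact subarcs of $\tilde\alpha$. Projecting back to $S$, the edge $T_\alpha^i(e) \subset \tau_i$ enters every neighborhood of every point of $\alpha$ for all sufficiently large $i$, and the Chabauty definition then gives $\alpha \subset \alpha'$.

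I expect the main technical hurdle to be setting up the lifts correctly: one must choose the equivariant lift of $T_\alpha$ and keep careful track of how $T_\alpha^i$ permutes lifts of $e$, so that the endpoints of $\tilde\gamma_i$ really are $x_0$ and $g^i(x_1)$. Once this bookkeeping is in place, the geometric convergence $g^i(x_1) \to p^+$ and the ensuing fellow-traveling of $\tilde\gamma_i$ with $\tilde\alpha$ are immediate from elementary hyperbolic geometry.
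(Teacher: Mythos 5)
Your construction is exactly the paper's: the published proof consists of fixing an ideal triangulation $\tau$, setting $\tau_i = D_\alpha^i(\tau)$, and asserting without further argument that this sequence converges to a lamination containing $\alpha$. You supply the missing verification --- compactness of $\GL(S)$ to extract a limit, plus a spiraling argument in $\HH^2$ to see $\alpha \subset \alpha'$ --- and this is the right strategy; passing to a subsequence is harmless since the lemma only asks for some convergent sequence.

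One step is stated too strongly. The lift of $T_\alpha$ that is the identity on one side of $\widetilde{\alpha}$ does \emph{not} agree with $g$ on the whole of the other side: the twist is supported in an annulus $A$ about $\alpha$, and beyond the band around $\widetilde{\alpha}$ the lift also incorporates the twisting carried by the other components of the preimage of $A$, namely the bands around the other lifts of $\alpha$. Consequently, if the edge $e$ meets $\alpha$ more than once, the relevant lift of $T_\alpha^i(e)$ runs from $x_0$ to $g^i(y_i)$, where $y_i$ is obtained from $x_1$ by the twisting in the bands that $\widetilde{e}$ crosses after $\widetilde{\alpha}$; in general $y_i \neq x_1$ and $y_i$ varies with $i$. (Your formula is exact only when the chosen lift $\widetilde{e}$ crosses no band other than the one around $\widetilde{\alpha}$, e.g.\ when $e \cap \alpha$ is a single point.) The argument survives with a small repair: choose the lift $\widetilde{e}$ so that the first band it crosses, starting from the $x_0$ end, is the band around $\widetilde{\alpha}$; then the near endpoint really is $x_0$, and the far endpoint is $g^i(y_i)$ where the $y_i$ converge to an ideal endpoint of one of the other bands' core geodesics. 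Since distinct lifts of the simple closed geodesic $\alpha$ share no ideal endpoints, the $y_i$ stay in a compact subset of $\bdy\HH^2 \setminus \{p^+, p^-\}$, so $g^i(y_i) \to p^+$ and the geodesics $\widetilde{\gamma}_i$ still converge to a geodesic asymptotic to $\widetilde{\alpha}$, after which your Chabauty argument goes through unchanged.
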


\begin{proof}
Fix any ideal triangulation $\tau$.  Let $D = D_\alpha$ be a Dehn
twist about $\alpha$.  Then $\tau_i = D^i(\tau)$ converges to the desired
$\alpha'$.
\end{proof}


\begin{define}
\label{def:ml}
Let $S$ be a punctured hyperbolic surface of finite area. Then
$\ML(S)$ denotes the space of compact, transversely measured
laminations. Every point of $\ML(S)$ is a pair $(\lamm, \mu)$ where
$\lamm$ is a compact geodesic lamination and $\mu$ is a transverse
measure of full support. That is, for each arc $\alpha$ intersecting
$\lamm$ transversely, $\mu(\alpha)$ is a positive number that stays
invariant under an isotopy preserving the leaves of $\lamm$. The
natural topology on $\ML(S)$ is called the \emph{measure topology}.

Let $\PML(S)$ denote the projectivization of $\ML(S)$, in which
nonzero measures that differ by scaling become identified. The measure
topology on $\ML(S)$ descends to $\PML(S)$.
\end{define}

If $\lamm$ is a disjoint union of arcs and closed curves in $S$, an
example of a transverse measure is the \emph{counting measure}, where
$\mu(\alpha) = \abs{\alpha \cap \lamm}$. For another example, suppose
that $S = \bdy_+ \core(N)$ is the upper boundary of the convex core in
a quasi-Fuchsian $3$--manifold. Then the pleating lamination $\lamm$
has a \emph{bending measure}, where $\mu(\alpha)$ is the integral of
the bending of $\alpha$ as it crosses leaves of $\lamm$.

The measure-forgetting map $\PML(S) \to \GL(S)$ is not continuous, but
it has the following convenient property.

\begin{fact}
\label{Fac:OverConverge}
Suppose that $(\lamm_i, \mu_i) \to (\lamm, \mu) \in \PML(S)$, in the
measure topology.  Then, after passing to a subsequence, there is a
lamination $\lamm' \in \GL(S)$ so that $\lamm \subset \lamm'$ and
$\lamm_i \to \lamm'$ in the Chabauty topology.
\end{fact}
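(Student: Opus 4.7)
The plan is to combine a compactness argument for $\GL(S)$ with a transversal/measure argument using the full support of $\mu$.

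First, I would recall (or cite from Canary--Epstein--Green~\cite[Section~3.1]{ceg:notes-on-notes}) that $\GL(S)$ is compact in the Chabauty topology. Applied to the underlying geodesic laminations $\lamm_i$, compactness lets me pass to a subsequence along which $\lamm_i \to \lamm'$ in the Chabauty topology for some $\lamm' \in \GL(S)$. The one nontrivial claim is then the inclusion $\lamm \subset \lamm'$.

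To prove $\lamm \subset \lamm'$, fix $x \in \lamm$ and an arbitrary open neighborhood $U$ of $x$. Since $\lamm_i \to \lamm'$ in Chabauty, by condition~(2) of Definition~\ref{def:gl} it suffices to exhibit a sequence $x_i \in \lamm_i$ with $x_i \to x$. To build such a sequence, choose a short geodesic arc $\alpha \subset U$ through $x$ that is transverse to $\lamm$ (such $\alpha$ exists because $\lamm$ is laminar). Because $\mu$ has full support on $\lamm$, the transverse measure satisfies $\mu(\alpha) > 0$. Next, lift the projective convergence $[\mu_i] \to [\mu]$ to an absolute convergence $\mu_i \to \mu$ in $\ML(S)$ by rescaling representatives; after shrinking $\alpha$ slightly so that it is transverse to all $\lamm_i$ as well (possible for large $i$ by Chabauty convergence of the $\lamm_i$), the measure topology gives $\mu_i(\alpha) \to \mu(\alpha) > 0$. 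Hence $\mu_i(\alpha) > 0$ for large $i$, which forces $\alpha \cap \lamm_i \neq \emptyset$. Pick $x_i \in \alpha \cap \lamm_i \subset U$.

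Since $U$ was arbitrary, a standard diagonal argument produces a subsequence $x_i \in \lamm_i$ with $x_i \to x$. By condition~(1) of Definition~\ref{def:gl} applied to $\lamm_i \to \lamm'$, we conclude $x \in \lamm'$, as desired.

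The main obstacle I anticipate is bookkeeping around the distinction between $\PML(S)$ and $\ML(S)$: one must choose absolute representatives of the projective classes so that $\mu_i \to \mu$ genuinely in $\ML(S)$, and one must verify that the transversal $\alpha$ can be chosen transverse to all but finitely many $\lamm_i$ in order to apply continuity of the transverse measure. Both points are routine given Chabauty convergence and the definition of the measure topology, but they need to be spelled out to make the continuity statement $\mu_i(\alpha) \to \mu(\alpha)$ rigorous.
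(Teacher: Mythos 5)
Your argument is essentially correct, but note that the paper itself offers no proof of Fact~\ref{Fac:OverConverge}: it is stated as a known fact, imported from the standard references for the appendix (Canary--Epstein--Green, Bonahon, Thurston), so there is no in-paper argument to compare against. Your route --- Chabauty compactness of $\GL(S)$ to extract $\lamm' = \lim \lamm_i$, then full support of $\mu$ plus convergence of measures to force $\lamm_i$ to enter every neighborhood of every point of $\lamm$ --- is the standard proof of this statement, and the paper does separately cite the compactness input you use (\cite[Proposition~4.1.6]{ceg:notes-on-notes}). Two small points to tighten. First, the reduction ``it suffices to exhibit $x_i \in \lamm_i$ with $x_i \to x$'' rests on condition~(1) of Definition~\ref{def:gl}, not condition~(2); you invoke (2) at that point but then correctly apply (1) in the final sentence, so this is only a misattribution --- and note that (1) as stated allows subsequences, which is exactly what your diagonal construction over shrinking neighborhoods produces. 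Second, the claim $\mu_i(\alpha) \to \mu(\alpha)$ is stronger than what the measure (weak*) topology gives in general: the transverse mass of a fixed arc need not vary continuously when mass escapes to or from its endpoints. What you actually need is only the portmanteau-type inequality $\liminf_i \mu_i(\alpha) \geq \mu(\alpha) > 0$ for an \emph{open} transversal $\alpha$ through $x$ (equivalently, test against a continuous bump function supported on the geodesics crossing $\alpha$), which already forces $\alpha \cap \lamm_i \neq \emptyset$ for all large $i$; and if $\alpha$ happens to lie along a leaf of some $\lamm_i$, the conclusion $\alpha \cap \lamm_i \neq \emptyset$ is immediate, so the transversality caveat is harmless. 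With the rescaling of projective classes to honest representatives handled as you indicate (e.g.\ by a continuous normalization of $\ML(S)\setminus\{0\}$), the proof is complete.
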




With these facts in hand, we can prove
Theorem~\ref{thm:core-bdy-teich}. The proof contains two steps, the
first of which deals with laminations only.

\begin{lemma}
\label{lemma:OverConverge}
Let $\lamm \in \GL(S)$ be a measurable lamination. Then there is a
sequence of ideal triangulations $\tau_i$ converging in the Chabauty
topology to a lamination $\lamm'' \supset \lamm$.
\end{lemma}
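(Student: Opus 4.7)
The plan is to approximate $\lamm$ in two stages---first by simple closed curves in the measure topology, then each curve by ideal triangulations in the Chabauty topology---and finish with a diagonal argument. First, since $\lamm$ is measurable, it supports a transverse measure $\mu$ of full support, so $(\lamm, \mu) \in \ML(S)$. By Thurston's density theorem, weighted simple closed curves are dense in $\ML(S)$, so I can choose positive weights $t_i$ and simple closed geodesics $\alpha_i \subset S$ with $t_i \alpha_i \to (\lamm, \mu)$ in the measure topology. Passing to a subsequence, Fact~\ref{Fac:OverConverge} yields a lamination $\lamm' \supset \lamm$ such that $\alpha_i \to \lamm'$ in the Chabauty topology on $\GL(S)$.

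Second, I apply Lemma~\ref{Lem:Twist} to each curve separately: for every $i$ there is a sequence of ideal triangulations $\sigma_{i,j}$ converging in Chabauty as $j \to \infty$ to some lamination $\alpha_i' \supset \alpha_i$. Since $\GL(S)$ is compact in the Chabauty topology, I pass to a further subsequence so that $\alpha_i' \to \lamm''$ for some $\lamm'' \in \GL(S)$. The key technical claim is that $\lamm'' \supset \lamm'$. To see this, take $x \in \lamm'$; by clause~(2) of Definition~\ref{def:gl} applied to $\alpha_i \to \lamm'$, there are points $x_i \in \alpha_i$ with $x_i \to x$, but $\alpha_i \subset \alpha_i'$, so $x_i \in \alpha_i'$, and then clause~(1) applied to $\alpha_i' \to \lamm''$ forces $x \in \lamm''$.

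To finish, I diagonalize. Fix a metric inducing the Chabauty topology, and for each $i$ choose $j(i)$ large enough that $\sigma_{i, j(i)}$ lies within distance $1/i$ of $\alpha_i'$. Setting $\tau_i := \sigma_{i, j(i)}$, the triangle inequality yields $\tau_i \to \lamm''$ in Chabauty, and $\lamm \subset \lamm' \subset \lamm''$, as desired. The main obstacle in the argument is the containment $\lamm'' \supset \lamm'$: I have containment of the approximating objects ($\alpha_i \subset \alpha_i'$) but need to transport it through Chabauty limits taken in opposite directions. This is handled by using the two clauses of Definition~\ref{def:gl} in exactly the right order, which is the one subtlety hidden in an otherwise routine diagonal construction.
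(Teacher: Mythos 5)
Your proof is correct and follows essentially the same route as the paper: approximate $(\lamm,\mu)$ by curves via Thurston's density theorem, upgrade to Chabauty convergence with Fact~\ref{Fac:OverConverge}, approximate each curve by triangulations via Lemma~\ref{Lem:Twist}, use compactness of $\GL(S)$, and finish with a $1/i$ diagonal argument. The only differences are cosmetic: you extract the convergent subsequence of the $\alpha_i'$ before diagonalizing (the paper diagonalizes first and then extracts a convergent subsequence of the $\tau_{i,j(i)}$), and you verify the containment $\lamm' \subset \lamm''$ directly from the two clauses of Definition~\ref{def:gl}, where the paper cites \cite[Lemma 4.1.8]{ceg:notes-on-notes} for the same fact.
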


\begin{proof}
Pick $\mu$, a measure of full support on $\lamm$, so that $(\lamm,
\mu) \in \PML(S)$.

Thurston proved that
curves, equipped with the counting measure, are dense in $\PML(S)$
\cite[Proposition 15]{bonahon:lamination}.  Thus we may pick a
sequence of curves $\{ \alpha_i \}$ converging to $(\lamm, \mu)$ in
the measure topology.  By Fact~\ref{Fac:OverConverge}, we may pass to a
subsequence and reindex so that $\alpha_i$ converges, in the Chabauty
topology, to a lamination $\lamm'$ containing $\lamm$.

By Lemma~\ref{Lem:Twist}, we may choose $\{\tau_{i,j}\}$, a sequence of
sequences of ideal triangulations, so that for all $i$
\[
\tau_{i,j} \to \alpha'_i \supset \alpha_i
\quad \mbox{as} \quad j \to \infty.
\]
For all $i$, choose an increasing function $j \from \NN \to \NN$, so
that $\tau_{i,j(i)}$ and $\alpha'_i$ have distance at most $1/i$, in
the metric that induces the Chabauty topology.

\begin{proofclaim}
The sequence $\tau_{i,j(i)}$ contains a subsequence converging to
$\lamm'' \supset \lamm'$.
\end{proofclaim}

\begin{proof}[Proof of claim]
As $\GL(S)$ is compact~\cite[Proposition~4.1.6]{ceg:notes-on-notes},
the sequence $\tau_{i,j(i)}$ contains a convergent subsequence.  In an
abuse of notation, denote this convergent subsequence by $\tau_i =
\tau_{i,j(i)}$.  Let $\lamm''$ be the limit of the $\tau_i$.  Note
that $\lamm''$ is not compact.

Recall that the Chabauty distance between $\tau_i$ and $\alpha'_i$ is
at most $1/i$.  Thus the sequence $\alpha'_i$ has the same limit as
$\tau_i$, namely $\lamm''$.  Since $\alpha_i \subset \alpha'_i$, it
follows by~\cite[Lemma 4.1.8]{ceg:notes-on-notes} that the limit of
$\alpha_i$ must be contained in the limit of $\alpha'_i$.  That is,
$\lamm' \subset \lamm''$, as desired.
\end{proof}

Since $\lamm \subset \lamm'$, Lemma~\ref{lemma:OverConverge} is
proven.
\end{proof}

The second step of the argument connects the above discussion of
laminations to pleated surfaces.  Following Definition
\ref{def:pleated}, we call a lamination $L \in \GL(S)$
\emph{realizable} if it is realized by a pleating map $f \from S \to
N$ homotopic to a prescribed map $f_0$.

\begin{lemma}
\label{lemma:teich-converge}
Let $N \cong S \times \RR$ be a cusped quasi-Fuchsian $3$--manifold,
and fix an embedding $f_0 \from S \to S \times \{ 0 \}$.  Let $L_i \in
\GL(S)$ be a sequence of laminations on $S$, which are realizable by
pleating maps homotopic to $f_0$. Then, if $L_i \to L'$ in the
Chabauty topology, and $L \subset L'$ is also realizable, the
hyperbolic metrics $d_i$ induced by pleating along $L_i$ converge in
$\teich(S)$ to the hyperbolic metric $d$ induced by pleating along
$L$.

In fact, the pleating maps $f_i \from S \to N$ converge to the
pleating map $f \from S \to N$ that realizes $L$, in the space
$\mathcal{MPS}(S, N)$ of marked pleated surfaces homotopic to $f_0$.
\end{lemma}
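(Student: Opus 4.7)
I would follow the standard argument for continuity of the pleated--surface map with respect to Chabauty convergence of realizable laminations (see \cite[Section~5.3]{ceg:notes-on-notes}), adapted to accommodate the mild variation that $L$ is contained in, rather than equal to, the Chabauty limit $L'$. The strategy has three steps: extract a subsequential limit of the $f_i$ by compactness; identify this limit with $f$ using the continuity of realization together with uniqueness of pleating; then upgrade to full-sequence convergence.

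The first step is compactness. By Lemma~\ref{Lem:Pleated}, every $f_i(S)$ lies in $\core(N)$, which has finite volume because $N$ is quasi-Fuchsian. Gauss--Bonnet gives $\area(f_i(S)) = -2\pi \chi(S)$, uniformly bounded. Combined with the fact that each $f_i$ is properly homotopic to the fixed $f_0$, so that cusps of $S$ are mapped into prescribed cusps of $N$, the standard compactness theorem for pleated surfaces in a cusped hyperbolic $3$--manifold (as in \cite[Section 5.2]{ceg:notes-on-notes}) yields a subsequence, still called $f_i$, that converges in the compact--open topology to a proper map $g \from S \to N$, homotopic to $f_0$. The limit $g$ is itself a pleating map realizing some lamination $L'' \in \GL(S)$, and the induced metrics $d_i$ converge in $\teich(S)$ to the metric $d'$ induced by $g$.

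The second step identifies the limit. Since $L_i \to L'$ in the Chabauty topology and $f_i \to g$ in the compact--open topology, each leaf of $L'$ is a pointwise limit of leaves in $L_i$, each of which is mapped by $f_i$ to a geodesic. Passing to the limit, the image of every leaf of $L'$ under $g$ is a geodesic in $N$, so $L' \subset L''$; in particular $L \subset L''$, so $g$ realizes $L$. The key fact now is the uniqueness of the realization: a pleating map in a fixed proper homotopy class that realizes $L$ is determined, because the leaves of $L$ map to the geodesic representatives of their homotopy classes in $N$, and each complementary region of $L$ in $S$ is an ideal polygon whose image must be the totally geodesic ideal polygon spanned by the corresponding ends/spikes. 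Applying this to both $f$ and $g$ gives $g = f$, and hence $d' = d$.

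The third step promotes subsequential convergence to convergence of the full sequence. Every subsequence of $\{f_i\}$ has a further subsequence converging to $f$ by the argument above, so since $\mathcal{MPS}(S,N)$ and $\teich(S)$ are Hausdorff, the entire sequence $f_i$ converges to $f$ in $\mathcal{MPS}(S,N)$, and the induced metrics $d_i$ converge to $d$ in $\teich(S)$. The main obstacle is the compactness step in the presence of cusps: one must ensure that the images $f_i(S)$ do not escape to infinity along a cusp, and that the parametrizations $f_i$ do not slide unboundedly along cusp directions. Both follow from the proper homotopy invariance of the cusp behavior together with the finite-volume bound on $\core(N)$; this is exactly where the noncompactness of $N$ forces us to reach into the Kleinian-group toolbox, as advertised in Section~\ref{subsec:outline}.
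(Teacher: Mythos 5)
Your overall skeleton (compactness in the space of marked pleated surfaces, identification of the subsequential limit, then the subsequence/Hausdorff trick) is the same as the paper's, and the first and third steps are essentially the paper's argument: the paper makes the compactness precise by noting that every pleated surface homotopic to $f_0$ lies in $\core(N)$ (Lemma~\ref{Lem:Pleated}) and must meet the compact set $K = \core(N) \setminus C$, and then quoting the compactness of $\mathcal{MPS}(S,K)$ from Canary--Epstein--Green, which is close in spirit to your Gauss--Bonnet plus ``no escape into the cusps'' sketch.

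The genuine gap is in your identification step. From $L_i \to L'$ and $f_i \to g$ you correctly get that $g$ maps every leaf of $L'$ to a geodesic, i.e.\ $L' \subset L''$ where $L''$ is the pleating locus of $g$. But in Definition~\ref{def:pleated}, ``$g$ realizes $L$'' also requires $g$ to map every component of $S \setminus L$ to a totally geodesic surface, and that does not follow from $L \subset L''$: a priori $g$ could be bent along leaves of $L'' \setminus L$ lying inside a complementary region of $L$. Moreover, the uniqueness you then invoke is both too weak and incorrectly justified. Uniqueness fails for the weak property you actually established (many distinct homotopic pleated maps, e.g.\ pleatings along different ideal triangulations, all send the leaves of a common sublamination to geodesics), so one must compare strict realizations; and your justification --- ``each complementary region of $L$ is an ideal polygon spanned by its spikes'' --- is false in the generality of the lemma: in the intended application $L$ is the bending lamination of $\bdy_- \core(N)$, whose complementary regions are the flat pieces of the convex core boundary, which may be subsurfaces with cusps, boundary, and genus, not ideal polygons. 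The paper closes exactly this hole by observing that $f$ itself realizes the larger lamination $L'$ (the leaves of $L' \setminus L$ lie inside the totally geodesic complementary regions of $L$, so they are automatically sent to geodesics and no new bending is introduced), and then citing Ohshika's lemma to conclude that the subsequential limit coincides with the realization of the Chabauty limit $L'$, i.e.\ $f_\infty = f$. Alternatively, one can argue directly that $g$ is totally geodesic on $S \setminus L'$ using part (1) of Chabauty convergence (each point off $L'$ has a neighborhood missing $L_i$ for all large $i$, where $f_i$ is totally geodesic), and then apply uniqueness of realizations of $L'$ --- not of $L$. Without one of these inputs, your conclusion $g = f$ is not established.
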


We refer the reader to~\cite[Definition 5.2.14]{ceg:notes-on-notes}
for the full definition of $\mathcal{MPS}(S, N)$. It suffices to note
that convergence in $\mathcal{MPS}(S, N)$ involves \emph{both}
convergence of metrics in $\teich(S)$ and convergence of pleating maps
in the compact-open topology.  

\begin{proof}[Proof of Lemma~\ref{lemma:teich-converge}]
Let $C$ be an embedded neighborhood of the cusps of $N$. Then $K
:= \core(N) \setminus C$ is a compact set, and every pleated
surface homotopic to $f_0$ must intersect $K$.

With this notation,~\cite[Theorem 5.2.18]{ceg:notes-on-notes} implies
that the space $\mathcal{MPS}(S, N) = \mathcal{MPS}(S, K)$ of marked
pleated surfaces that intersect $K$ is compact. In particular, the
pleating maps $f_i \from S \to N$, which pleat along lamination $L_i$,
have a convergent subsequence, $f_{n_i} \to f_\infty$. Let $L_\infty$
be the pleating lamination of $f_\infty$.

Now, recall that $L_i \to L'$ in the Chabauty topology, and $L \subset
L'$ is realizable by a pleating map $f \from S \to N$ homotopic to
$f_0$. Since every component of $f(S \setminus L)$ is totally geodesic
in $N$, the leaves of $L' \setminus L$ are mapped into these totally
geodesic regions, hence $L'$ is realized by the same map $f$. In this
setting, Ohshika~\cite[Lemma 1.3]{Ohshika05} notes that $f_\infty = f$
is the same pleating map that realizes the limiting lamination $L'$.

Therefore, every convergent subsequence of $f_i$ must limit to the
same map $f_\infty = f$ that realizes the lamination $L$. Since
$\mathcal{MPS}(S, K)$ is compact, this means that $f_i \to f$ in the
topology on $\mathcal{MPS}(S, K)$. This means that $f_i \to f$ in the
compact-open topology, and also that the hyperbolic metrics $d_i$
induced by $f_i$ converge in $\teich(S)$ to the hyperbolic metric $d$
induced by $f$.\end{proof}

\begin{proof}[Proof of Theorem~\ref{thm:core-bdy-teich}]
Let $N \cong F \times \RR$ be a cusped quasi-Fuchsian $3$--manifold,
as in the statement of the theorem. Let $L$ be the pleating lamination
on the lower core boundary $\bdy_- \core(N)$. The bending measure
$\mu$ on $\bdy_- \core(N)$ is a transverse measure of full support, so
$L$ is a measurable lamination. By Lemma~\ref{lemma:OverConverge},
there is a sequence of ideal triangulations $\tau_i \to L'$ in the
Chabauty topology, where $L \subset L'$. Now, by Lemma
\ref{lemma:teich-converge}, the pleating maps $f_i \from F \to N$ that
pleat along $\tau_i$ induce hyperbolic metrics on $F$ that converge in
$\teich(F)$ to the induced metric on $\bdy_- \core(N)$. By the definition of 
$\mathcal{MPS}(S, N)$, these pleating maps also converge in the 
compact--open topology to a pleating map for $\bdy_- \core(N)$.
\end{proof}


\section{A lemma in point-set topology}

Let $X$ be a topological space, and let $\mathcal{S}$ be a cover of
$X$ by closed sets. Define a \emph{discrete walk of length $k$}
through sets of $\mathcal{S}$ to be a sequence of points $x_0, x_1,
\ldots, x_k$, such that $x_0, x_1 \in S_1$, $x_1, x_2 \in S_2$, and so
on, for sets $S_1, \ldots, S_k$ that belong to $\mathcal{S}$. We say
this sequence is a walk from $x_0$ to $x_k$.

The following observation is needed in the proof of
Lemma~\ref{lemma:arc-sequence}.

\begin{lemma}
\label{lemma:appendix}
Let $X$ be a connected topological space, and let $S_1, \ldots, S_n$
be closed sets whose union is $X$. Then, for any pair of points $x,y
\in X$, there is a discrete walk from $x$ to $y$ through sets in the
collection $\{ S_1, \ldots, S_n \}$.
\end{lemma}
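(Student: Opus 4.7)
The plan is to argue by connectedness, using the finiteness of the cover to produce a clopen decomposition of $X$ and conclude there is only one ``walk-component.''

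First, I would introduce an auxiliary relation on the index set $\{1, \ldots, n\}$: declare $i \sim j$ if there is a finite sequence of indices $i = i_0, i_1, \ldots, i_k = j$ with $S_{i_{\ell-1}} \cap S_{i_\ell} \neq \emptyset$ for every $\ell$. This is plainly an equivalence relation on the indices (reflexive by the trivial chain, symmetric by reversal, transitive by concatenation). Let $E_1, \ldots, E_m$ denote its equivalence classes, and for each class set $U_\alpha = \bigcup_{i \in E_\alpha} S_i$.

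Next I would verify the two key properties of the $U_\alpha$. Each $U_\alpha$ is a finite union of closed sets and hence closed. If $\alpha \neq \beta$ then $U_\alpha \cap U_\beta = \emptyset$, because any point in both would lie in some $S_i$ with $i \in E_\alpha$ and some $S_j$ with $j \in E_\beta$, which by the one-step relation would force $i \sim j$ and collapse the classes. Together the $U_\alpha$ cover $X$. Since there are only finitely many of them and they are pairwise disjoint, each $U_\alpha$ equals the complement of the union of the others, and is therefore also open. Thus each $U_\alpha$ is clopen, and by connectedness of $X$ there is only a single class: $m = 1$.

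Finally, given $x, y \in X$, pick indices $i, j$ with $x \in S_i$ and $y \in S_j$. By the previous paragraph $i \sim j$, so fix a chain $i = i_0, i_1, \ldots, i_k = j$ with $S_{i_{\ell-1}} \cap S_{i_\ell} \neq \emptyset$ for each $\ell$. Choose $x_\ell \in S_{i_\ell} \cap S_{i_{\ell+1}}$ for $\ell = 1, \ldots, k-1$, and set $x_0 = x$, $x_k = y$. Then $x_0, x_1 \in S_{i_1}$, $x_1, x_2 \in S_{i_2}$, and so on, which is precisely a discrete walk from $x$ to $y$.

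There is essentially no obstacle here; the only point that needs care is the observation that a finite disjoint union of closed sets covering $X$ produces clopen pieces, which is where finiteness of the cover is used in an essential way. (Without finiteness the analogous statement fails, as $\mathbb{R} = \bigcup_{n \in \mathbb{Z}} [n, n+1]$ shows in a different direction: there the cover is locally finite so connectedness still produces a walk, but in general one cannot drop some hypothesis on the cover.)
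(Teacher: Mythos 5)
Your argument is correct and is essentially the paper's proof in disguise: the paper defines walk-equivalence directly on points of $X$ and observes that the finitely many equivalence classes are disjoint closed sets covering the connected space $X$, which is exactly your clopen decomposition $U_1, \ldots, U_m$, just organized at the level of points rather than indices. One small indexing slip at the end: as written, $x_0 = x$ lies in $S_{i_0}$ but need not lie in $S_{i_1}$, so the walk should begin with the set $S_{i_0}$ and include a point of $S_{i_0} \cap S_{i_1}$; that is, choose $x_\ell \in S_{i_{\ell-1}} \cap S_{i_\ell}$ for $\ell = 1, \ldots, k$ and use the $k+1$ sets $S_{i_0}, S_{i_1}, \ldots, S_{i_k}$, giving a walk of length $k+1$ from $x$ to $y$.
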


\begin{proof}
Define an equivalence relation $\equiv$ on $X$, where
\[
x \equiv y \quad \Leftrightarrow \quad \mbox{there exists a discrete
walk from $x$ to $y$ through $\{ S_1, \ldots, S_n \}$.}
\]
Reversing and concatenating walks proves this is an equivalence
relation.  Furthermore, each closed set $S_i$ must be entirely
contained in an equivalence class, because its points are connected by
a walk of length $1$. Thus each equivalence class is closed. Since the
connected space $X$ cannot be expressed as a disjoint union of
finitely many closed sets, all of $X$ must be in the same equivalence
class.
\end{proof}

\bibliographystyle{hamsplain} 
\bibliography{biblio.bib}

\end{document}